\newtheorem{remark}{Remark}[section]
\newcommand{\jrp}[1]{{\color{orange}{#1}}}
\newcommand{\beq}{\begin{equation}}
\newcommand{\eeq}{\end{equation}}
\newcommand{\beqq}{\begin{equation*}}
\newcommand{\eeqq}{\end{equation*}}
\newcommand{\beqas}{\begin{eqnarray*}}
\newcommand{\eeqas}{\end{eqnarray*}}
\newcommand{\bsp}{\begin{split}}
\newcommand{\eesp}{\end{split}}
\renewcommand{\div}{\mathop{\rm div}\nolimits}
\newcommand{\xoe}{{x\over\epsilon}}
\newcommand{\ep}{\epsilon}
\newcommand{\dy}{ \mathrm{d}y}
\def\norm#1{\|#1\|}
\title{ physics-informed neural networks for learning the homogenized coefficients of multiscale elliptic equations % G-limits in homogenization problems 
}
\author{ Jun Sur R Park\thanks{Department of Mathematics, University of Iowa, Iowa City, IA 52246. USA. Email: 
junsur-park@uiowa.edu.}
           \and  Xueyu Zhu\thanks{Department of Mathematics, University of Iowa, Iowa City, IA 52246. USA. Email: xueyu-zhu@uiowa.edu.}}
\begin{document}

\maketitle 

\begin{abstract}
Multiscale elliptic equations with scale separation are often approximated by the corresponding homogenized equations with slowly varying homogenized coefficients (the G-limit).
The traditional homogenization techniques typically rely on the periodicity of the multiscale coefficients, thus finding the G-limits often requires sophisticated techniques in more general settings even when multiscale coefficient is known, if possible. Alternatively, we propose a simple approach to estimate the G-limits from (noisy-free or noisy) multiscale solution data, either from the existing forward multiscale solvers or sensor measurements. 
By casting this problem into an inverse problem, 
our approach adopts physics-informed neural networks (PINNs) algorithm to estimate the G-limits from the multiscale solution data by leveraging a priori knowledge of the underlying homogenized equations. Unlike  the existing approaches,  our approach does not  rely on the periodicity assumption or the known multiscale coefficient during the learning stage, allowing us to estimate homogenized coefficients in more general settings beyond the periodic setting.   We demonstrate that the proposed approach can deliver reasonable and accurate approximations to the G-limits as well as homogenized solutions through several benchmark problems.  

\end{abstract}

\begin{keywords}
physics-informed neural network, multiscale, homogenization, G-limit,  G-convergence
\end{keywords}

\pagestyle{myheadings}
\thispagestyle{plain}

\section{Introduction}
\label{sec:Intro}

A wide range of scientific and engineering problems involve multiple scales due to the heterogeneity of the media properties. Direct numerical simulation for multiscale problems, such as multiscale elliptic equations, is typically computationally demanding due to the finescale fluctuation of the media properties. 
A major effort has been made in past decades to approximate a multiscale equation  by the corresponding homogenized equation, whose coefficient, known as the homoegenized coefficient or G-limit \cite{spagnolo1967sul,spagnolo1976convergence}, does not depend on the fine scale. The resulting solution is referred to as the homogenized solution. 
However, deriving the homogenized equations requires the computation of the G-limits, which is a difficult task for general problems. For standard periodic or locally periodic problems, there are several homogenization methods to find the G-limits, such as two-scale and multiscale convergence \cite{allaire1992homogenization, allaire1996multiscale}, but they can be computationally demanding as they often involve a large number of local problem computations. Additionally, if the periodicity assumption does not hold, the standard homogenization methods are not directly applicable, and non-trivial extensions are usually needed if possible. As a result, deriving the homogenized models from the first principles 
remains challenging for general homogenization problems.

Alternatively, there has been a surge of interest in data-driven learning the effective macroscale model from available measurements or simulated data.  In \cite{you2021data}, coarse-grained nonlocal models are learned from synthetic high-fidelity (multiscale) data by recovering the sign-changing kernels. In  \cite{chen2020physics}, physics-informed neural networks (PINNs) were employed to
retrieve the effective permittivity parameters from scattering data in inverse scattering electromagnetic problems.  In  \cite{arbabi2020linking}, 
a neural network algorithm coupled with an equation-free method has been developed to approximate homogenized solution of a time-dependent multiscale problem using simulated multiscale solution data. Regarding the homogenization on  multiscale  elliptic equations, there have been several inversion approaches related to the homogenization problems in the past several years. The authors in \cite{frederick2014numerical, abdulle2020bayesian, abdulle2020ensemble} recovered multiscale coefficients from (noisy) multiscale solution data using corresponding homogenized models based on numerical homogenization techniques  - the finite element heterogeneous method (FE-HMM) to reduce the computational cost of their forward problems.
A Bayesian estimation has been developed to reconstruct the slowly varying parts of the multiscale coefficients from the noisy measurement of multiscale solution data in \cite{nolen2009fine}.  Nonetheless,
the majority of the existing methods assume that the multiscale coefficients are periodic.
More general multiscale coefficients such as non-periodic coefficients are considered in \cite{gulliksson2016separating}.
The authors separated the oscillations of the multiscale coefficients from the weak $L^2$ limits of them and recovered the part of G-limits from the contributions of the oscillations. 
However, they required the multiscale coefficients to be known  during the inversion stage. In addition, the existing inversion methods often require specialist knowledge, such as numerical multiscale methods or homogenization theory, which can be difficult for  application practitioners. 
These limitations motivate the development of simple and flexible   algorithms for the homogenization of multiscale  elliptic equations with scale separations in more general settings. 

The goal in this paper is to develop a simple and flexible framework to learn the G-limits and corresponding homogenized solutions simultaneously for multiscale elliptic equations, given multiscale solution data. % even when the traditional homogenization methods are not applicable.
Unlike other approaches, 
 our approach does not require    the periodicity of the multiscale coefficient or a known multiscale coefficients during the learning stage.
Instead, we assume that the (simulated or measured) solution data of the multiscale equations are available and the the structure of corresponding homogenized equations are known. 
 We mainly consider the following two possible scenarios:
\begin{center}
\begin{itemize}
\item 
{\it Noise-free data}: In this scenario, we assume that the traditional homogenization methods may not be  applicable, e.g., in  non-periodic cases, but the multiscale solution data can be generated by the exisiting forward solver of the multiscale problem with a known multiscale coefficient. Our goal is to estimate the corresponding G-limit and the homogenized solutions.
\item {\it Noisy data:} In this case, we consider that % a specific medium where 
noisy multiscale solution data  %a specific medium 
(from a specific medium with a fixed  finescale size $\ep$) can be collected by sensors. %,
We aim to learn the G-limit of the unknown multiscale coefficient and corresponding homogenized solution as they can serve as  good approximations to the effective behavior of the multiscale coefficient when $\ep$ is sufficiently small.
\end{itemize}
\end{center}

Specifically, we adopt one emerging scientific machine learning framework  - the physics-informed neural network (PINNs) for our problem. They have been successfully used for approximating solutions to both forward and inverse problems regarding PDEs \cite{lu2019deepxde,chen2020physics,raissi2019physics}. One key component of PINNs is to provide neural network approximations to the solutions of forward or inverse problems by incorporating prior physics knowledge into the loss functional. This feature turns out to be beneficial for our current setting. Since
the multiscale solution data often contains rapid oscillations or noise, estimating the G-limits and homogenized solutions from the multiscale or random fluctuations is a fundamental challenge. 
To address these issues, we trained the neural works to approximate the G-limit and the corresponding homogenized solution for the elliptic homogenized equations based on the multiscale solution data. By incorporating the corresponding homogenized equation into the loss function, PINNs can encourage the neural network to capture the slowly varying parts of the multiscale solution data.

It is worth noting that collecting a large number of the multiscale solution data containing sufficient finescale information is in general difficult, especially when the finescale parameter $\ep$ is very small. In addition, the measurements by sensors are often corrupted by noises that dominate the finescale fluctuations.   
Nonetheless, we found that our approach does not require dense sampling of  the multiscale solution  data in space in order to retain the detailed finescale information as we are only interested in the macroscopic (homogenized) behavior of the multiscale solution data. 
With the prior knowlege of the structure of the  homogenized equation,
PINNs can provide an effective regularization that can cope with the noise and the multiscale features in the data.
We demonstrate the applicability and performance of our approach via several benchmark examples with both noise-free and noisy data. 

\begin{comment}
\jrp{
In this paper, we develop an efficient method to estimate the G-limits and corresponding homogenized solutions even when the traditional homogenization methods are not applicable.
Our method does not require the explicit form or the periodicity of the multiscale coefficient,
instead, we assume that the solution data of the multiscale equations are available. We mainly consider the following two scenario:
(1)When we know the closed form of the coefficient in the multiscale equation, we generate the noise-free solution data by performing direct forward simulations of the equation. (2)  we assume that multiscale coefficient is unknown but we measured noisy multiscale solution data measured by sensors. \textcolor{blue}{We found that the multiscale data do not need to resolve the finescale oscillations of the multiscale solution when the finescale size $\ep$, is very small. }
}
\end{comment}

The paper is organized as follows. In Section 2, we introduce the concepts of G-convergence and G-limit, and the formulation of the inverse problem. In Section 3, we briefly review the physics-informed neural networks (PINNs) and adopt them in our context.
Finally, we demonstrate the performance of the proposed methods with several numerical examples, including locally periodic,  non-periodic, non-standard, and random homogenization cases.

\pagestyle{myheadings}
\thispagestyle{plain}
%\markboth{S. JIN, D. XIU and X. ZHU}
 %         {S. JIN, D. XIU and X. ZHU}

%\newpage

\section{Background and Problem Setup}
In this section, we first introduce the definition of G-convergence and G-limit in the homogenized equations, given the multiscale elliptic equations. 
Then we discuss the convergence of homogenization in a special case where the periodic multiscale coefficients are given. 
Finally, we formulate the inverse problem to learn the G-limits and the corresponding homogenized solutions.  
\subsection{G-convergence and G-limit} 
We first briefly review the general theory of homogenization and introduce the notion of the G-convergence and G-limit (homogenized coefficient).
Let us consider a sequence of the following second order multiscale elliptic equations:
\beq
\label{eq:original_gen}
\bsp
-\div \bigg(A^\ep (x) \nabla u^\ep(x) \bigg) &= f(x) \ \ \textrm{in} \ \ \Omega, \\
u^\ep(x) &= 0 \ \ \textrm{on} \ \ \partial \Omega,
\end{split}
\eeq
where $\Omega \in \mathbb{R}^N$ is the domain and $A^\ep: \Omega \to \mathbb{R}^{N\times N}$ is a symmetric multiscale coefficient with finescale size $\ep$.
We consider the sequence of coefficients $A^\ep(x)$ and the corresponding solution $u^\ep(x)$ of (\ref{eq:original_gen}).
The G-convergence of the sequence $A^\ep(x)$ is defined as follows \cite{spagnolo1967sul,spagnolo1976convergence}:
\begin{definition}
\label{def:glimit}
A sequence of coefficient $A^\ep(x)$ in (\ref{eq:original_gen}) is said to G-converge to a limit $A^*(x)$ as $\ep$ tends to $0$, if the sequence of solution $u^\ep(x)$ converges weakly in $H^1_0(\Omega)$ to $u_0(x)$, the unique solution of the following homogenized equation,
\beq
\label{eq:homogenized_gen}
\bsp
-\div \bigg(A^*(x) \nabla u_0(x) \bigg) &= f(x) \ \ \textrm{in} \ \ \Omega, \\
u_0(x) &= 0 \ \ \textrm{on} \ \ \partial \Omega,
\end{split}
\eeq
for any source term $f(x)$. The limit matrix $A^*(x)$ is called the G-limit of $A^\ep(x)$.
\end{definition}

We now define the following class of matrices. 
\begin{definition}
A matrix function $A(x)$ is said to belong to $E(\alpha,\beta,\Omega)$ if the followings are satisfied for some $\alpha$, $\beta>0$.
\beq
\bsp
&A(x) \in L^\infty(\Omega)^{N\times N}, \\
&A(x)k\cdot k \geq \alpha |k|^2, \ \ \textrm{for all} \ \ k \in\mathbb{R}^N,\ a.e. \ x\in\Omega\\
&|A(x)k| \leq \beta|k|, \ \ \textrm{for all} \ \ k \in\mathbb{R}^N,\ a.e. \ x\in\Omega.
\end{split}
\eeq
\end{definition}
We have the following theorem that justifies the definition of G-convergence \cite{defranceschi1993introduction}.
\begin{theorem}
\label{thm:Gconv}
Let $A^\ep(x)$ be a sequence of functions that belong to $E(\alpha,\beta,\Omega)$. Then there exist a function $A^*(x) \in E(\alpha,\beta,\Omega)$ such that $A^\ep(x)$ G-converges to $A^*(x)$ up to subsequence.
\end{theorem}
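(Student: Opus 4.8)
The plan is to establish this as the classical G-compactness (or H-compactness) theorem of Spagnolo and Murat--Tartar: using only the uniform ellipticity and boundedness encoded in $E(\alpha,\beta,\Omega)$, one extracts a subsequence along which the solution operators converge, and the limiting equation is again of the form $-\div(A^*\nabla u_0)=f$ with $A^*\in E(\alpha,\beta,\Omega)$. The three ingredients are (i) uniform a priori estimates together with weak compactness, (ii) identification of the weak limit of the fluxes via compensated compactness, and (iii) verification that the resulting $A^*$ is independent of the source $f$ and inherits the ellipticity bounds.

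First I would fix $f\in H^{-1}(\Omega)$ and use the Lax--Milgram theorem together with the coercivity bound $A^\ep k\cdot k\ge\alpha|k|^2$ to obtain, for each $\ep$, the unique solution $u^\ep\in H^1_0(\Omega)$ of (\ref{eq:original_gen}) satisfying the energy estimate $\|u^\ep\|_{H^1_0(\Omega)}\le C\|f\|_{H^{-1}(\Omega)}$, with $C$ depending only on $\alpha$ and $\Omega$. The bound $|A^\ep k|\le\beta|k|$ then shows that the fluxes $\sigma^\ep:=A^\ep\nabla u^\ep$ are bounded in $L^2(\Omega)^N$. By weak compactness of bounded sets in Hilbert space I extract a subsequence along which $u^\ep\rightharpoonup u_0$ in $H^1_0(\Omega)$ and $\sigma^\ep\rightharpoonup\sigma_0$ in $L^2(\Omega)^N$. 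Since $-\div\sigma^\ep=f$ for every $\ep$ and the divergence is weakly continuous from $L^2$ into $H^{-1}$, the limit satisfies $-\div\sigma_0=f$ in $\Omega$.

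The main obstacle is step (ii): identifying $\sigma_0=A^*\nabla u_0$ for a matrix field $A^*$ that does not depend on $f$. Here I would use Tartar's method of oscillating test functions. For each coordinate direction $e_j$ I construct correctors $w^\ep_j$, depending only on the coefficient sequence and not on $f$, with $\nabla w^\ep_j\rightharpoonup e_j$ in $L^2(\Omega)^N$ and with the corrector fluxes $A^\ep\nabla w^\ep_j$ converging weakly and having divergences precompact in $H^{-1}(\Omega)$. Using the symmetry of $A^\ep$, the scalar product $\sigma^\ep\cdot\nabla w^\ep_j$ equals $(A^\ep\nabla w^\ep_j)\cdot\nabla u^\ep$; I would apply the div--curl lemma of compensated compactness to each expression---in the first, $\sigma^\ep$ has divergence $-f$ (precompact in $H^{-1}$) paired against the curl-free field $\nabla w^\ep_j$; in the second, the corrector flux (precompact divergence) is paired against the curl-free field $\nabla u^\ep$---and equate the two resulting weak limits. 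Collecting these identities over $j$ produces the pointwise relation $\sigma_0=A^*\nabla u_0$, where the rows of $A^*$ are the weak limits of the corrector fluxes, so $A^*$ is manifestly independent of $f$.

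Finally I would verify $A^*\in E(\alpha,\beta,\Omega)$. The $L^\infty$ bound and the lower bound $A^*k\cdot k\ge\alpha|k|^2$ follow from the weak lower semicontinuity of the energy $\int_\Omega A^\ep\nabla u^\ep\cdot\nabla u^\ep\,\dx$ combined with the div--curl identification, and the upper bound $|A^*k|\le\beta|k|$ follows analogously; crucially, the symmetry of $A^\ep$ is what preserves the same constants $\alpha$ and $\beta$ in the limit. To make the extracted subsequence serve every $f$ simultaneously, I would either run the construction along a countable dense set of right-hand sides in $H^{-1}(\Omega)$ and pass to a single diagonal subsequence, extending to all $f$ by linearity and the uniform energy bound, or---more directly---observe that the corrector-based definition of $A^*$ already yields a subsequence depending only on the coefficient sequence. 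The full argument is standard and may be found in \cite{defranceschi1993introduction}.
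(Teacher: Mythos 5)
The paper does not actually prove this theorem---it is quoted from \cite{defranceschi1993introduction}---and your sketch correctly reproduces the standard Spagnolo/Murat--Tartar compactness argument given in that reference: uniform Lax--Milgram energy estimates, extraction of weakly convergent solutions and fluxes, identification of the limit flux $\sigma_0 = A^*\nabla u_0$ via oscillating test functions and the div--curl lemma, and preservation of the constants $\alpha,\beta$ in the limit thanks to the symmetry of $A^\ep$ (without symmetry the upper bound degrades to $\beta^2/\alpha$, as you rightly note). The one point to tighten is the order of operations: the correctors $w_j^\ep$ with $\nabla w_j^\ep \rightharpoonup e_j$ and $f$-independent of the data cannot be constructed a priori, but are obtained \emph{after} the diagonal extraction over a countable dense family of right-hand sides, using the invertibility of the limit solution operator to realize prescribed (locally affine) limit solutions---a subtlety you acknowledge only in your closing remark, where it in fact belongs at the start of step (ii).
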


The following theorem guarantees the uniqueness of the G-limit.
\begin{theorem}
\label{thm:gconvunique}
The G-limit of a G-converging sequence is unique.
\end{theorem}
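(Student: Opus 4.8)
The plan is to exploit the defining feature of G-convergence in Definition~\ref{def:glimit} — that it pins down the weak limit of the solutions of \eqref{eq:original_gen} for \emph{every} source term $f$ — together with the fact that weak limits in a Hilbert space are unique. Suppose the sequence $A^\ep$ G-converges both to $A^*$ and to $B^*$, and fix an arbitrary $f$. By Definition~\ref{def:glimit}, the one sequence $u^\ep=u^\ep(f)$ determined by $A^\ep$ and $f$ converges weakly in $H^1_0(\Omega)$ to the solution $u_0^{A^*}$ of \eqref{eq:homogenized_gen} with coefficient $A^*$, and simultaneously to the solution $u_0^{B^*}$ of the same boundary value problem with coefficient $B^*$. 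Since a weakly convergent sequence has at most one weak limit, $u_0^{A^*}=u_0^{B^*}=:u_0$. Hence this single function $u_0$ solves both homogenized equations, and subtracting them yields
\beq
\div\big((A^*-B^*)\nabla u_0\big)=0 \quad\text{in } H^{-1}(\Omega)
\eeq
for this (arbitrary) $f$.

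Next I would eliminate the dependence on $f$ using surjectivity of the homogenized solution operator. Because $A^*\in E(\alpha,\beta,\Omega)$, the bilinear form associated with $-\div(A^*\nabla\,\cdot\,)$ is bounded and coercive, so by Lax--Milgram the map $f\mapsto u_0$ is an isomorphism of $H^{-1}(\Omega)$ onto $H^1_0(\Omega)$. Letting $f$ range over $H^{-1}(\Omega)$, the associated limits $u_0$ therefore range over \emph{all} of $H^1_0(\Omega)$. Writing $C:=A^*-B^*\in L^\infty(\Omega)^{N\times N}$ and testing the displayed identity against $\phi\in H^1_0(\Omega)$, I obtain the weak form
\beq
\int_\Omega C\,\nabla w\cdot\nabla\phi \,\dx = 0 \qquad\text{for all } w,\phi\in H^1_0(\Omega).
\eeq

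The remaining, and genuinely delicate, step is to pass from this integrated identity to the pointwise conclusion $C=0$ a.e., i.e. $A^*=B^*$. Since $A^\ep$ is symmetric, its G-limits $A^*$ and $B^*$ are symmetric, so $C$ is symmetric and by polarization it suffices to show $\int_\Omega (C\xi\cdot\xi)\,\theta^2\,\dx=0$ for every fixed $\xi\in\mathbb{R}^N$ and every cutoff $\theta\in C_c^\infty(\Omega)$; the claim then follows by letting $\theta^2$ approximate indicators and applying the Lebesgue differentiation theorem over a countable dense set of directions $\xi$. To generate such test functions I would use highly oscillatory gradients: with $w=\phi=\tfrac1k\sin(k\,\xi\cdot x)\,\theta(x)$ one has $\nabla\phi=\cos(k\,\xi\cdot x)\,\theta\,\xi$ plus a term of order $1/k$ in $L^2$, and since $\cos^2(k\,\xi\cdot x)\rightharpoonup\tfrac12$ weakly by the Riemann--Lebesgue lemma, the identity passes to the limit as $\tfrac12\int_\Omega \theta^2\,(C\xi\cdot\xi)\,\dx=0$. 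I expect this localization to be the main obstacle, since the rich but abstract family of admissible gradients must be converted into pointwise information about $C$; an alternative route that avoids oscillatory test functions is to first establish convergence of the fluxes $A^\ep\nabla u^\ep\rightharpoonup A^*\nabla u_0$ via the div--curl lemma, from which $A^*\nabla u_0=B^*\nabla u_0$ for all $f$ yields the same conclusion.
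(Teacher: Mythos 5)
Your proof is correct in the framework the paper actually works in, and it is necessarily a different route from the paper's, because the paper offers no argument at all: its proof of Theorem~\ref{thm:gconvunique} is a bare citation to Section~7 of Defranceschi's notes. The argument there runs through the convergence of the momenta (fluxes) $A^\ep\nabla u^\ep \rightharpoonup A^*\nabla u_0$ and local solutions with affine data --- essentially the div--curl alternative you sketch at the end. Your main line instead stays entirely at the level of the limit equation: identical weak limits for every $f$, Lax--Milgram surjectivity to obtain $\int_\Omega C\,\nabla w\cdot\nabla\phi\,\dx = 0$ for all $w,\phi\in H^1_0(\Omega)$, then oscillatory test functions to localize. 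That reduction is airtight; one cosmetic remark is that if ``any source term $f$'' in Definition~\ref{def:glimit} is read as $f\in L^2(\Omega)$ rather than $f\in H^{-1}(\Omega)$, the limits $u_0$ only sweep a dense subspace of $H^1_0(\Omega)$, but since $C\in L^\infty$ the bilinear form is bounded and the identity extends by continuity. The oscillation computation with $\cos^2(k\,\xi\cdot x)\rightharpoonup \tfrac12$ and the passage to $C\xi\cdot\xi=0$ a.e.\ over a countable dense set of directions are standard and valid.

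The one step you should not wave through is the sentence ``since $A^\ep$ is symmetric, its G-limits $A^*$ and $B^*$ are symmetric.'' If candidate limits are merely required to lie in $E(\alpha,\beta,\Omega)$ as defined in the paper --- a class that does not impose symmetry --- then this assertion is a nontrivial theorem in its own right, and your argument genuinely cannot close without it: the integrated identity only ever constrains the quadratic form $C\xi\cdot\xi$, i.e.\ the symmetric part of $C$. Indeed, uniqueness is false in the nonsymmetric class: adding any constant antisymmetric matrix $S$ to $A^*$ gives $\div\big((A^*+S)\nabla u\big)=\div\big(A^*\nabla u\big)$, since the contraction of $S$ with the (symmetric) Hessian vanishes, so $A^*+S$ reproduces the same solutions for every $f$ while remaining in $E(\alpha,\beta',\Omega)$. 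The repair is definitional rather than analytic: in the Spagnolo/Defranceschi framework the G-convergence class consists of symmetric matrices, so both competitors are symmetric by fiat. You should either state this convention explicitly or prove symmetry of the limit (e.g.\ via the div--curl lemma); with that in place, your polarization step correctly finishes the proof.
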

\begin{proof}
See \cite[Section 7]{defranceschi1993introduction}
\end{proof}

The following remark provides important properties of the G-limit and one motivation for the recovery of the G-limit.
%\jrp{The following remark provides other main properties of the G-limit. }
\begin{remark}
The G-limit $A^*(x)$ does not depend on the source term $f(x)$ by definition. It is also known that it also does not depend on the boundary conditions \cite[Chapter 1]{allaire2012shape}.
Thus, the G-limit recovered with specific source term $f(x)$ and the boundary condition $g(x)$ in (\ref{eq:original_gen}) can be reused with the different source terms in the same medium.  
\end{remark} 

From Theorem \ref{thm:Gconv}, we know that a well-posed homogenized limit (\ref{eq:homogenized_gen}) exists, but in general, there is no systemic way to find the explicit formula for the G-limit $A^*(x)$.
In addition, the G-convergence is only guaranteed up to a subsequence in the theorem. For (locally) periodic media, the G-convergence is well studied and the G-limit can be computed by the periodic homogenization methods \cite{papanicolau1978asymptotic,jikov2012homogenization}.
We remark that even though it might not be clear that how to construct the explicit form of the the G-limit $A^*(x)$ in general,  \eqref{eq:homogenized_gen} does provide the structure of the homogenized equation served as a generic priori knowledge for PINNs.

\subsection{Homogenization for periodic media}
 In this section, we present the outline and the convergence results of the standard periodic homogenization.
We let $\Omega \in \mathbb{R}^N$ be a bounded domain and $Y$ be a unit cube in $\mathbb{R}^N$. 
We consider the homogenization of the following multiscale elliptic equation:
\beq
\label{eq:original_per}
\bsp
-\div \bigg(A^\ep (x) \nabla u^\ep(x) \bigg) &= f(x) \ \ \textrm{in} \ \ \Omega, \\
u^\ep(x) &= 0 \ \ \textrm{on} \ \ \partial \Omega.
\end{split}
\eeq
Here, $\ep$ represents the fine scale of the system. The coefficient has the scale separation and is defined by $A^\ep(x) = A(x,\xoe)$, where $A(x,y)$ is $Y$-periodic with respect to the fast variable $y$. Thus, we consider the coefficient $A^\ep(x)$ with smooth finescale oscillations. We further assume that $A^\ep(x)$ is in ${\mathcal C}^\infty(\Omega)$ and uniformly positive, i.e., $A^\ep(x) > c >0$ for some constant $c$. 

We consider the following two-scale asymptotic expansion of the solution $u^\ep(x)$.
\beq
\label{eq:asympt}
u^\ep(x) = u_0(x) + \ep u_1(x,\xoe) + \ep^2 u_2(x,\xoe) + \dots,
\eeq
where $u_i(x,\xoe)$, ($i = 1,2,\dots$) are $Y$-periodic with respect to $y = \xoe$.
We can derive the following homogenized equation with the G-limit $A^*(x)$ using the above expansion:
\beq
\label{eq:homogenized_per}
\bsp
-\div \bigg(A^*(x) \nabla u_0(x) \bigg)  &= f(x) \ \ \textrm{in} \ \ \Omega, \\
u_0(x) &= 0 \ \ \textrm{on} \ \ \partial \Omega,
\end{split}
\eeq
where the G-limit $A^*(x)$ is defined as follows:
\beq
\label{eq:kappastar}
A^*_{ij}(x) =  \int_Y  A(x,y) (\delta_{ij} + {\partial  \chi^j(x,y)\over \partial y_i}) \dy,
\eeq
where $\chi^i(x,y)$ is the solution of the following {\it cell problem}:
\beq
\label{eq:cell}
\div_y\bigg(A(x,y) \nabla_y \chi^i(x,y)\bigg) = -\div_y(A(x,y)e^i),
\eeq
on $Y$ with periodic boundary condition. Here, $e^i$ is the standard basis vector in $\mathbb{R}^n$.

This homogenized equation does not depend on the fine scale $\ep$ and the solution $u_0(x)$ represents the macroscopic behavior of the solution $u^\ep(x)$ to the multiscale equation (\ref{eq:original_per}) when $\ep$ is sufficiently small.
This can be rigorously explained by the following theorem on the convergence of the multiscales solution $u^\ep(x)$ to the homogeinzed solution $u_0$ \cite{papanicolau1978asymptotic}.
\begin{theorem}
\label{thm:weakconv}
Assume $A^\ep(x) \in L^\infty(\Omega)$, $f(x)\in L^2(\Omega)$. Let $u^\ep(x)$ and $u_0(x)$ be the solutions to (\ref{eq:original_per}) and (\ref{eq:homogenized_per}) respectively. Then as $\ep \to 0$, the sequence $u^\ep(x)$ converges weakly in $H^1(\Omega)$ to $u_0(x)$.
\end{theorem}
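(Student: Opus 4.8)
The plan is to combine the standard energy estimate with the method of oscillating test functions (equivalently, compensated compactness); the two-scale convergence route developed around \eqref{eq:asympt} is an alternative, but for the weak $H^1$ statement the oscillating-test-function argument is the most economical. First I would establish a uniform a priori bound. Writing the weak form of \eqref{eq:original_per}, testing against $u^\ep$ itself, and using the coercivity of the (uniformly elliptic, bounded) coefficients $A^\ep \in E(\alpha,\beta,\Omega)$ together with the Cauchy--Schwarz and Poincar\'e inequalities, one gets
\beqq
\alpha\,\norm{\nabla u^\ep}^2 \le \int_\Omega A^\ep \nabla u^\ep \cdot \nabla u^\ep \dx = \int_\Omega f\,u^\ep \dx \le C\,\norm{f}\,\norm{\nabla u^\ep},
\eeqq
so $u^\ep$ is bounded in $H^1_0(\Omega)$ uniformly in $\ep$. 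By weak compactness there is a subsequence (not relabeled) and a limit $u^*\in H^1_0(\Omega)$ with $u^\ep\rightharpoonup u^*$ in $H^1_0(\Omega)$ and, by Rellich, $u^\ep\to u^*$ strongly in $L^2$. Since $A^\ep$ is bounded in $L^\infty$ by $\beta$, the flux $\xi^\ep:=A^\ep\nabla u^\ep$ is bounded in $L^2(\Omega)^N$, so $\xi^\ep\rightharpoonup \xi^*$ weakly in $L^2$ along a further subsequence. Passing to the limit in $-\div\xi^\ep = f$, which is stable under weak $L^2$ convergence in the sense of distributions, yields $-\div\xi^*=f$ in $\Omega$.

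The heart of the argument is to identify $\xi^* = A^*\nabla u^*$; this cannot be done by naive passage to the limit, since weak limits do not commute with products and $\xi^*$ is \emph{not} obtained by applying any pointwise average of $A^\ep$ to $\nabla u^*$. To get around this I would introduce, for each coordinate direction $j$, the oscillating test function built from the cell problem \eqref{eq:cell},
\beqq
w^\ep_j(x) = x_j + \ep\,\chi^j(x,\xoe),
\eeqq
(with a cutoff near $\partial\Omega$). These satisfy $w^\ep_j\to x_j$ strongly in $L^2$ and $\nabla w^\ep_j\rightharpoonup e^j$ weakly in $L^2$, while the associated fluxes converge to the homogenized ones, $A^\ep\nabla w^\ep_j\rightharpoonup A^* e^j$ weakly in $L^2$ with $A^*$ given by \eqref{eq:kappastar}, and, crucially, $\div(A^\ep\nabla w^\ep_j)$ is relatively compact in $H^{-1}(\Omega)$.

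I would then invoke the div--curl lemma (compensated compactness) twice. First, $\xi^\ep=A^\ep\nabla u^\ep$ has divergence fixed (equal to $-f$, hence precompact in $H^{-1}$) and $\nabla w^\ep_j$ is curl-free, so the scalar products pass to the limit,
\beqq
A^\ep\nabla u^\ep\cdot\nabla w^\ep_j \;\rightharpoonup\; \xi^*\cdot e^j = \xi^*_j ,
\eeqq
in the sense of distributions. Using the symmetry of $A^\ep$ to rewrite the left side as $\nabla u^\ep\cdot A^\ep\nabla w^\ep_j$ and applying the div--curl lemma again (now $\nabla u^\ep$ is a gradient and $A^\ep\nabla w^\ep_j$ has divergence precompact in $H^{-1}$), the same products converge to $\nabla u^*\cdot A^* e^j = (A^*\nabla u^*)_j$ by symmetry of $A^*$. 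Equating the two limits gives $\xi^*_j = (A^*\nabla u^*)_j$ for every $j$, hence $\xi^*=A^*\nabla u^*$ and $u^*$ solves \eqref{eq:homogenized_per}. Since $A^*\in E(\alpha,\beta,\Omega)$, this homogenized problem is well posed by Lax--Milgram, so $u^*=u_0$ is uniquely determined; as the limit is independent of the extracted subsequence, the whole family $u^\ep$ converges weakly in $H^1(\Omega)$ to $u_0$.

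The main obstacle is precisely this flux identification: passing to the limit in the product of two merely weakly convergent sequences is false in general, and the homogenized coefficient genuinely differs from any pointwise average of $A^\ep$. The compensated-compactness structure---one factor with controlled divergence, the other curl-free---is what forces the limit of the product to equal the product of the limits, while the correctors $\chi^j$ supply test functions whose weak limits encode exactly $A^*$. A technical point to handle with care is the slow-variable dependence of $A(x,y)$: differentiating $\chi^j(x,\xoe)$ in $x$ produces extra terms in $\nabla w^\ep_j$, but these are $O(\ep)$ and vanish in the limit, so the bulk of the work is verifying the weak convergence $A^\ep\nabla w^\ep_j\rightharpoonup A^* e^j$ together with the $H^{-1}$-compactness of $\div(A^\ep\nabla w^\ep_j)$.
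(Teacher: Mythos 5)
Your proposal is correct and coincides with the paper's own treatment: the paper gives no proof of Theorem~\ref{thm:weakconv}, citing \cite{papanicolau1978asymptotic}, and the argument you outline --- uniform energy estimate, weak $H^1_0$ compactness, and identification of the limiting flux $\xi^*=A^*\nabla u^*$ via the oscillating test functions $w^\ep_j = x_j + \ep\chi^j(x,\xoe)$ and a double application of the div--curl lemma --- is exactly the classical Tartar energy method by which this result is established in that reference. Your handling of the two subtle points is also right: you correctly supplement the theorem's bare $A^\ep\in L^\infty(\Omega)$ hypothesis with the uniform coercivity the paper assumes earlier (its class $E(\alpha,\beta,\Omega)$), and you correctly flag that in the locally periodic case the slow-variable derivatives of $\chi^j$ contribute only $O(\ep)$ terms to $\nabla w^\ep_j$, while $\div(A^\ep\nabla w^\ep_j)$ remains bounded in $L^2$ (the $\ep^{-1}$ part cancels by the cell problem \eqref{eq:cell}) and hence precompact in $H^{-1}(\Omega)$.
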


Above result is obtained under minimal regularity assumptions on the multiscale coefficient and the source term. 
However, in practice, it is often the case that we can achieve the strong convergence of the multiscale solution to the homogenized solution. For example, we have the following convergence estimates \cite[Chapter 6]{papanicolau1978asymptotic}. 
\begin{remark}
\label{rmk:strongconv}
Let $u^\ep(x)$ and $u_0(x)$ be the solutions to (\ref{eq:original_per}) and (\ref{eq:homogenized_per}) respectively. Assuming $A^\ep(x)$ and $f(x)$ are smooth, we have the following convergence rate.
\beq
\label{eq:linftyconv}
\bsp
\norm{u^\ep(x)- u_0(x)}_{L^\infty(\Omega)} \leq C \ep, 
\end{split}
\eeq
where $C>0$ is independent of $\ep$.
\end{remark}

In (locally) periodic media, we solve the cell problems (\ref{eq:cell}) to compute the G-limit when the explicit form of the multiscale coefficient $A^\ep(x)$ is known. 
Without the periodicity assumption or a known multiscale coefficient $A^\ep(x)$,  traditional homogenization methods are typically not applicable. Despite the fact that the convergence result (\ref{eq:linftyconv}) holds only for periodic cases, we can still expect the multiscale solution data to be close to the homogenized solution for sufficiently small $\ep$ even if the periodic assumption is violated. This  motivates us to utilize  multiscale solution data as a surrogate for the corresponding homogenized solution data for more general scenarios beyond the periodicity assumptions.

\subsection{Inverse Problem formulation}

Equipped with the background knowledge introduced above, we now consider the  multiscale elliptic equations (\ref{eq:original_gen}) and assume   a  well-posed  homogenized  limit (\ref{eq:homogenized_gen})  exists. 
We also assume the multiscale coefficient is %isotropic and 
smooth, %, i.e., $A^\ep(x)$ is in ${\mathcal C}^\infty(x)$. % and understood as scalar function.
but no geometric assumptions, such as periodicity, are required. 

In this work, we consider the following inverse problem setting: given a set of observations/data points, our goal is to learn the G-limit $A^*(x)$ and the homogenized solution $u_0(x)$ of the homogenized limit (\ref{eq:homogenized_gen}). 
As we mentioned before, the homogenized solution data are often not  available.
Instead, we utilize the multiscale solution data of the equation (\ref{eq:original_gen}) as a surrogate for the homogenized solution data. 

We remark that even though the multiscale solution data are close to the homogenized solution in most of the regions in our domain for sufficiently small $\ep$, our solution data contains multiscale or noise fluctuations that do not present in the homogenized solution.  %This can be  challenging for the learning task, 
This introduces additional difficulties because one needs to approximate the slowly varying functions from multiscale solution.
It is preferable for a method to be less sensitive to these finescale oscillations and the noise in our multiscale solutions data. 
Motivated by recent developments of the physics-informed neural networks (PINNs) \cite{lu2019deepxde,raissi2019physics}, we propose to develop PINNs for estimating the G-limits,  which not only  simultaneously match the measurements/data  while respecting the underlying physics in the problem,  but also provide an effective regularization to mitigate the adversarial effects due to the multiscale fluctuations or noise  in the data.

\section{Method}
Next, we will briefly review physics-informed neural networks (PINNs) \cite{lu2019deepxde,chen2020physics} and adopt it to tackle the inverse problems to learn the G-limits in the homogenized equation (\ref{eq:homogenized_gen}), given the corresponding multiscale solution data. %(\ref{eq:original_gen}). 

\subsection{Feed-forward neural network}
We shall use feed-forward neural networks to approximate the solution $u_0(x)$ and the effective coefficients $A^*(x)$ in (\ref{eq:homogenized_gen}). 
The feed-forward neural network with $L$ layers and $N_l$ neurons in the $l$th layer is a function ${\mathcal N}_\theta(x): \mathbb{R}^{N_0} \to \mathbb{R}^{N_L}$ defined by 
\beq
\bsp
{\mathcal N}_\theta(x) &= W^L {\mathcal N}^{L-1}(x) + b^L, \\ 
{\mathcal N}^l(x) &= \sigma (W^l {\mathcal N}^{l-1}(x) + b^l), \\ 
{\mathcal N}^1(x) &= W^1 x + b^1,
\end{split}
\eeq
for $1<l<L$. The matrix $W^l \in \mathbb{R}^{N_{l-1} \times N_l}$ and the vector $b^l \in \mathbb{R}^{N_l}$ represent the weight and bias in $l$-th layer and $\sigma$ is a nonlinear activation function, such as ReLU function, the hyperbolic tangent function, and the sine function \cite{goodfellow2016deep}. 
We further define the set of tunable weights and biases of the neural network, $\theta = \{ W^l, b^l\}$ for $1\leq l \leq L$.

\subsection{PINNs for inverse problems}
For ease of presentation, we consider the following partial differential equation for the solution $u(x)$ with an unknown coefficient $A(x)$:
\beq
\label{eq:residual}
{\mathcal F}\left[u(x);A(x)\right] = 0, \ \textrm{in} \ \Omega, %\in \mathbb{R}^N,
\eeq
given a Dirichlet boundary condition,
\beq
\label{eq:bc}
u(x) = g(x), \ \ \textrm{on} \ \ \partial\Omega. 
\eeq
Given  the observation data on the solution $u(x)$ is available,
we are interested in recovering the unknown coefficient $A(x)$ and the entire solution field $u(x)$.

PINNs  employ the feed-forward networks ${\mathcal N}_{\theta_u}(x)$ and ${\mathcal N}_{\theta_A}(x)$ to approximate the solution and the unknown coefficients respectively, where $\theta_u$ and $\theta_A$ represent the trainable network parameters for each network. Then we train the networks to get the approximations $\hat{u}(x)$ for solutions and $\hat{A}(x)$ for unknown coefficients by minimizing the following loss functional, including  data misfit, the PDE residual loss (\ref{eq:residual}) and the boundary condition loss (\ref{eq:bc}) over the training set ${\mathcal T}$:
\beq
\label{eq:loss_gen}
{\mathcal L}(\theta_{u},\theta_{A}; {\mathcal T}) = \lambda_r{\mathcal L_r}(\theta_{u},\theta_{A}; {\mathcal T}_r)+  \lambda_d{\mathcal L_d}(\theta_{u},\theta_{A}; {\mathcal T}_d) +    \lambda_b{\mathcal L_b}(\theta_{u},\theta_{A}; {\mathcal T}_b),
\eeq
where
\beq
\bsp
&{\mathcal L}_r(\theta_{u},\theta_{A}; {\mathcal T}_r) = \frac{1}{|{\mathcal T}_r|} \displaystyle\sum_{x_i^r\in{\mathcal T}_r} \left\lvert {\mathcal F}\left(\hat{u}(x_i^r);\hat{A}(x_i^r)\right) \right\rvert^2,\\
% &{\mathcal L}(\theta_{u_0},\theta_{A^*}; {\mathcal T}_b) = \frac{1}{|{\mathcal T}_b|} \displaystyle\sum_{x_i^b\in{\mathcal T}_b} \left\lvert \hat{u}_{0}(x_i^b)-g(x_i^b) \right\rvert^2,\\
&{\mathcal L}_d(\theta_{u},\theta_{A}; {\mathcal T}_d) = \frac{1}{|{\mathcal T}_d|} \displaystyle\sum_{x_i^d\in{\mathcal T}_d} |\hat{u}(x_i^d)-u(x_i^d) |^2, \\
&{\mathcal L}_b(\theta_{u},\theta_{A}; {\mathcal T}_b) = \frac{1}{|{\mathcal T}_b|} \displaystyle\sum_{x_i^b\in{\mathcal T}_b} |\hat{u}(x_i^b)-g(x_i^b) |^2,
\end{split}
\eeq
where  $\lambda_r$, $\lambda_d$  and $\lambda_b$ denote the weights for each loss term. The training points ${\mathcal T} = {\mathcal T}_r \cup {\mathcal T}_d \cup {\mathcal T}_b$. ${\mathcal T}_d$, ${\mathcal T}_r$, and ${\mathcal T}_b$ denote  data/measurement points, PDE residual points, and boundary data points. Both ${\mathcal T}_r \in \Omega$, and ${\mathcal T}_b \in \partial \Omega$ are predfined and can be chosen from mesh grid points or randomly.
The parameters $\theta_u$ and $\theta_A$ can be found 
by minimizing the loss function (\ref{eq:loss_gen}), and the resulting networks $\hat{u}(x)$ and $\hat{A}(x)$ are the approximations to the solution $u(x)$ and the coefficient $A(x)$ of the equation \eqref{eq:residual}.

\subsection{Learning the G-limits via PINNs}

Following that, we adopt the PINNs framework to tackle the inverse problem of estimating the G-limit $A^*(x)$  for the multiscale elliptic equation (\ref{eq:original_gen}). One issue is that the measurements of the homogenized solution are often not available. Motivated by the convergence results for periodic media in \eqref{eq:linftyconv}, we employ the multiscale solution data $u^{\epsilon}$ of the multiscale equation (\ref{eq:original_gen}) as the training data, which is expected to be a good surrogate for the homogenized solution data when $\ep$ is sufficiently small.
%For the the data observation, we use the data on the solution of the multiscale equation (\ref{eq:original_gen}). 

We construct two feed-forward neural networks $\hat{A}^*(x) = {\mathcal N}_{\theta_{A^*}}(x)$ and $\hat{u}_0(x)={\mathcal N}_{\theta_{u_0}}(x)$ % that depend on the parameters $\theta_{u_0}$ 
to approximate the G-limit and the solution of the homogenized equation (\ref{eq:homogenized_gen}). Since we consider Dirichlet boundary condition \eqref{eq:bc} in this work, the boundary condition can be embeded into the neural network exactly. 
Specifically, we follow the approach suggested in \cite{lu2021physics} by modifying the solution network output ${\mathcal N}_{\theta_{u_0}}$:
\beq
\label{eq:hardconst}
\hat{u}_0(x) = g(x)+l(x){\mathcal N}_{\theta_{u_0}},
\eeq
where $u_0(x) = g(x)$ is a Dirichlet boundary condition, and $l(x)$ is a function that satisfies the following conditions.
\beq
l(x) = 0 \ \ \textrm{on} \ \ \partial \Omega, \ \ \ \   l(x) >0 \ \ \textrm{in} \ \ \Omega-\partial \Omega.
\eeq
With a simple domain, we can analytically choose $l(x)$ \cite{lagaris1998artificial}. For example, for the domain $[a,b]^2$, we can choose $l(x) = (x_1-a)(b-x_1)(x_2-a)(b-x_2)$, where $x = (x_1,x_2)$.

We then seek a set of network parameters $\theta_{u_0}$ and $ \theta_{A^*}$ that minimize the loss function defined as follows:
\beq
\label{eq:loss_hom}
{\mathcal L}(\theta_{u_0},\theta_{A^*}; {\mathcal T}) = \lambda_r{\mathcal L_r}(\theta_{u_0},\theta_{A^*}; {\mathcal T}_r)+  \lambda_d{\mathcal L_d}(\theta_{u_0},\theta_{A^*}; {\mathcal T}_d),
\eeq
where 
\beq
\bsp
&{\mathcal L}_r(\theta_{u_0},\theta_{A^*}; {\mathcal T}_r) = \frac{1}{|{\mathcal T}_r|} \displaystyle\sum_{x_i^r\in{\mathcal T}_r} \left\lvert \div\bigg(\hat{A}^*(x_i^r)\nabla \hat{u}_{0}(x_i^r)\bigg)+f(x_i^r) \right\rvert^2,\\
&{\mathcal L}_d(\theta_{u_0},\theta_{A^*}; {\mathcal T}_d) = \frac{1}{|{\mathcal T}_d|} \displaystyle\sum_{x_i^d\in{\mathcal T}_d} |\hat{u}_{0}(x_i^d)-u^{\ep}(x_i^d) |^2.
\end{split}
\eeq
Here, $u^{\ep}(x_i^d)$ denotes the (noise-free/noisy) multiscale solution data at $x_i^d$. Note that since the neural network $\hat{u}_0(x)$ satisfies the boundary condition exactly, there are only two terms in the loss function.
The first term  (\ref{eq:loss_hom}) encourages the neural network to respect  the homogenized equation (\ref{eq:homogenized_gen}). 
The second term makes sure that  the approximated homogenized solution is not far from the multiscale solution data $u^\ep(x)$. 
The choice of the regualization parameters  $\lambda_r$ and $\lambda_d$ could affect the training performance considerably. We adopted the adaptive weight techniques \cite{wang2020and} in this work. In summary,
Figure \ref{fig:pinns_scheme} presents the schematic deisgn of the PINNs for our problem.

\begin{figure}[t]
 \centering
\includegraphics[scale=0.3]{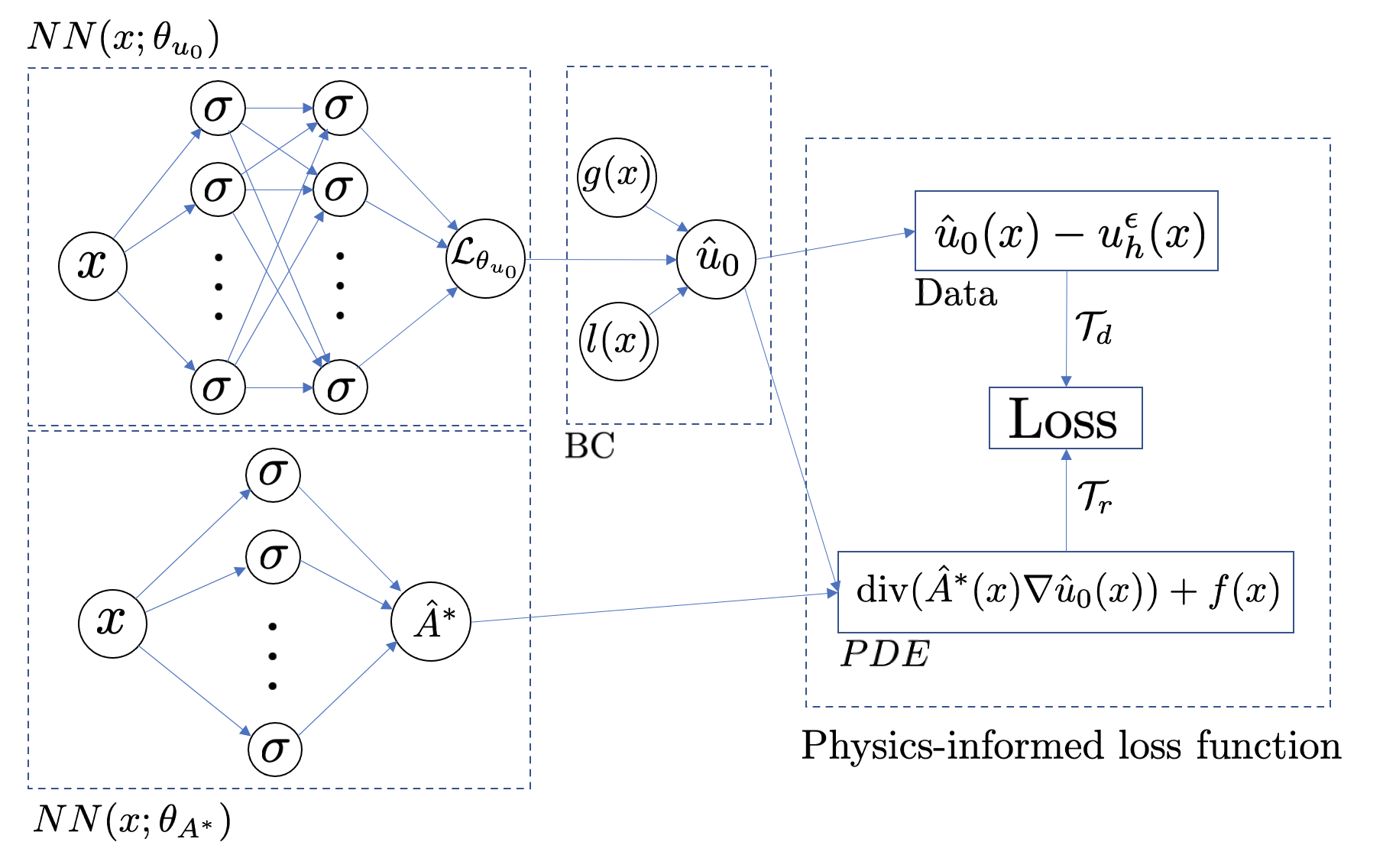}
\caption{ The schematic architecture of PINNs  for learning the G-limit $A^*(x)$ in the homogenized equation (\ref{eq:homogenized_gen}) by the neural network $\hat{A}^*(x)$. The boundary condition $u_0(x) = g(x)$ is strictly imposed using (\ref{eq:hardconst}).}
 \label{fig:pinns_scheme}
\end{figure}

\section{Numerical Examples}
\label{sec:NumExample}

In this section, we present several numerical examples to illustrate
the effectiveness and applicability of our method, %We use PINNs to solve different types of homogenization inverse problems. 
including the elliptic equations with locally periodic, non-periodic, and ergodic random multiscale coefficients. 
The noise-free measurements are generated from the multiscale solution data 
of the multiscale elliptic equation by the underlying forward FEM simulation. For noisy scenario, we corrupt the noise-free data with independent, and identically distributed normal noise with different noise levels. %and adding  noise with different levels to the solution data}

To estimate the accuracy of the recovered G-limit $\hat{A}^*(x)$ and the homogenized solutions $\hat{u}_0(x)$, we use the following relative $L^2$-errors computed over a predefined mesh grid in spatial domain: 
\beq
\label{eq:kappaerror}
e_{\hat{A}^*} = \frac{\norm{\hat{A}^*(x)-A^*(x)}_{{L^2}(\Omega)}}{\norm{A^*(x)}_{{L^2}(\Omega)}}, \ \
e_{\hat{u}_{0}} = \frac{\norm{\hat{u}_{0}(x) -u_{0,h}(x)}_{L^2(\Omega)}}{\norm{ u_{0,h}(x)}_{L^2(\Omega)}}.
\eeq
Here, $A^*(x)$ is the reference G-limit that is either exact or pre-computed by FEM via traditional homogenization methods. % We restate that $\hat{u}_{0,h}(x)$ denotes the solutions obtained by FEM using $\hat{\kappa}^*(x)$, and 
The reference homogenized solutions, $u_{0,h}(x)$, are computed by FEM using the reference G-limits. 

During the training stage, %We start with ADAM optimizer and 
we alternatively use ADAM and L-BFGS as suggested in \cite{lu2019deepxde,shin2020convergence}. %To improve the training performance, we employed the adaptive weight 
A hyperbolic tangent function is used as the activation in all examples.  
In addition, the architectural parameters of neural network
were tuned to achieve reasonable results. The architecture parameters and other hyperparameters used for each example are listed in {\it Table \ref{tb:hyperparameters}} in the appendix. Advanced hyperparameter selection techniques can further improve the results, which, however, is not the focus of this work. % if not mentioned.
In addition, the multiple restarts approach is adopted to
prevent the results from being affected by how the weights are (randomly) initialized. More specifically, we train the nets with a number of random initialization using Glorot normal initializer, 
and report the best possible results for each example.
All examples are carried out %, we use the library DeepXDE \cite{lu2019deepxde} 
on Google’s Colab \cite{carneiro2018performance} using the library SciANN \cite{haghighat2021sciann}.

\subsection{Homogenization of a slowly varying periodic coefficient}
To test the basic capability  of our proposed method, we first consider the following multiscale elliptic equation with a slowly varying periodic coefficient: 
\beq
\label{eq:1dlocper}
\bsp
-\frac{d}{d x}\left(\frac{1+x^2}{2+\sin(2\pi\xoe)}\frac{d}{d x}u^\ep(x)\right) &= \cos(\pi x)\ \ \textrm{in} \ \ \Omega = [0,1] , \\
u^\ep(0) &= u^\ep(1) = 0.
\end{split}
\eeq
In this example, the permeability coefficient depends on both $x$ and $\xoe$, and is %\textcolor{red}{locally} 
periodic with respect to $\xoe$. %The effective coefficient  depends on $x$ and we seek the G-limit $\kappa^*(x)$ and consequently, the homogenized solution $u_0(x)$ associated   
The analytical G-limit is known as $A^*(x) = \frac{x^2+1}{2}$. We compute the reference homogenized solution $u_{0,h}(x)$ using FEM via the exact G-limit. 

To generate the noise-free data, we compute the multiscale solution to the equation (\ref{eq:1dlocper}) for each finescale parameter value $\epsilon$ by FEM with mesh size $h = 1/10^5$ and obtain equally spaced data sampled from the multiscale solution  as the  training data.  For noisy data, we corrupt the measurements with different noise levels. %
The architecture parameters and other hyperparameters of PINNs are listed in {\it Table \ref{tb:hyperparameters}} in appendix.
The relative $L^2$ errors for both G-limit and homogenized solution are computed on a mesh with size $h=1/10^5$.

We first plot %results with a fixed .% and then we investigate the effect of different sizes of $\ep$.
the relative $L^2$ errors with respect to the size of training data with $\ep = 2^{-7}$ in Figure \ref{figure:glimitsoltdns1}. 
For the noise-free case, the proposed method can achieve the errors at the level of ${\mathcal O}(10^{-3})$ for the G-limit and ${\mathcal O}(10^{-4})$ for the homogenized solution.
As the data set was enriched, the error level saturated. 
With noisy data, the error increases with the noise level and can be reduced as additional data are available, particularly for a high noise level. With $5\%$ noise, the relative errors for the approximated G-limit and solution are roughly $4\%$ and $1\%$ respectively, given enough data.
To further demonstrate the performance of the method, Figure \ref{figure:1dlocper_plots_ns} plots the G-limits and the homogenized solution recovered by PINNs for $\ep = 2^{-7}$, where  
% It is clear that both recovered G-limits and the homogenized solution agree well with the reference solutions with noise-free data. 
 both G-limit and homogenized solution are well approximated under the different noise levels. As shown in Figure \ref{fig:1dlocper_plot_data_homsols_ums1_ns1_sc} and \ref{fig:1dlocper_plot_data_homsols_ums1_ns3_sc}, even when the multiscale data contain non-negligible random fluctuations, PINNs can still capture the macroscopic variation of the data thanks to the regularization provided by the homogenized equation.

\begin{figure}[!hbt]
	\centering
	%  \captionsetup{justification=centering}
	\begin{subfigure}{0.40\textwidth}
  \includegraphics[width=\textwidth]{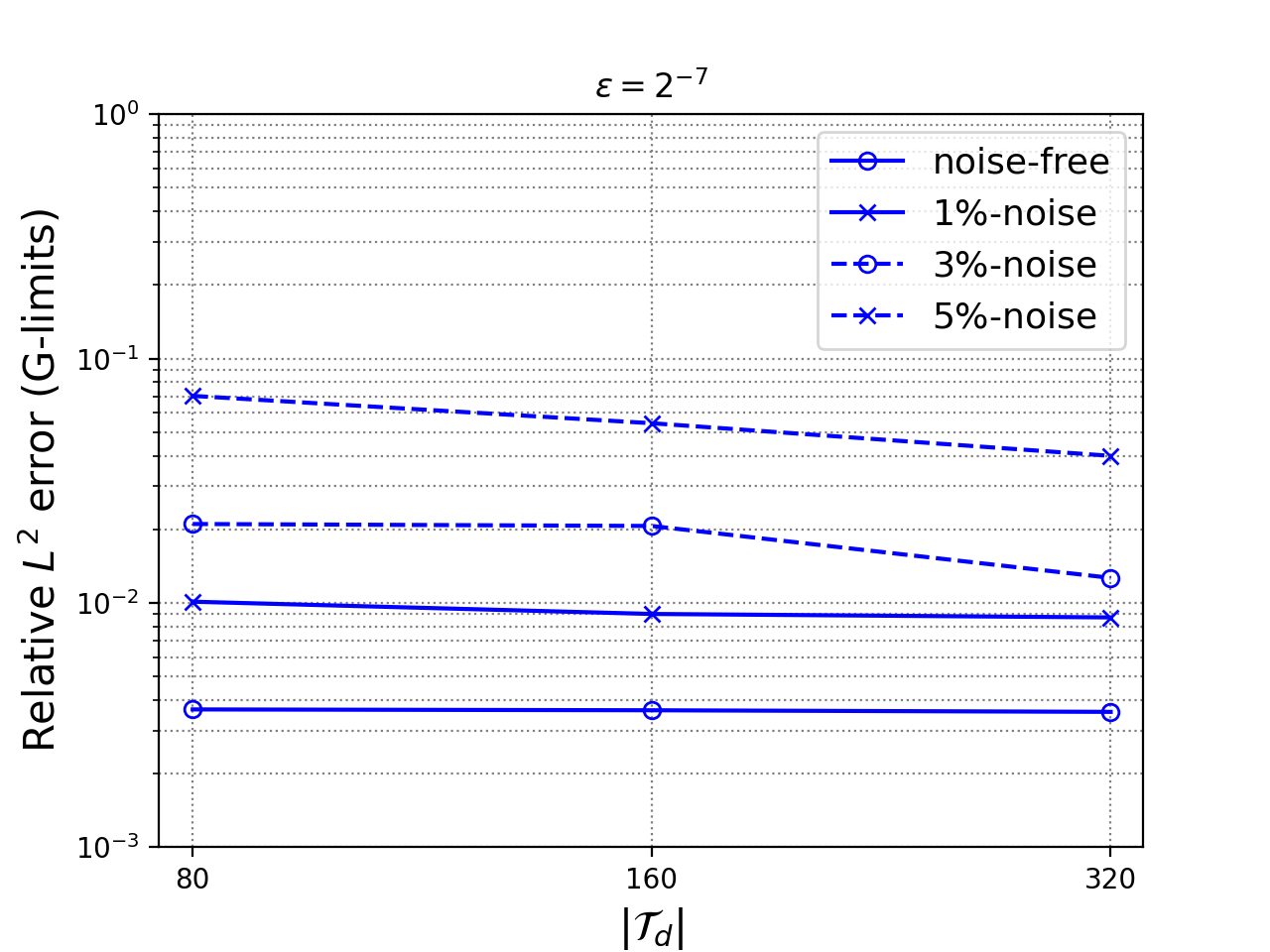}
  \caption{G-limits}%with width = 32, depth = 2 $\kappa_{nn} =0.500019
  \label{fig:1dlocper_errors_glimit_nd}
\end{subfigure}
\hspace{.3in}
  \begin{subfigure}{0.40\textwidth}
  \includegraphics[width=\textwidth]{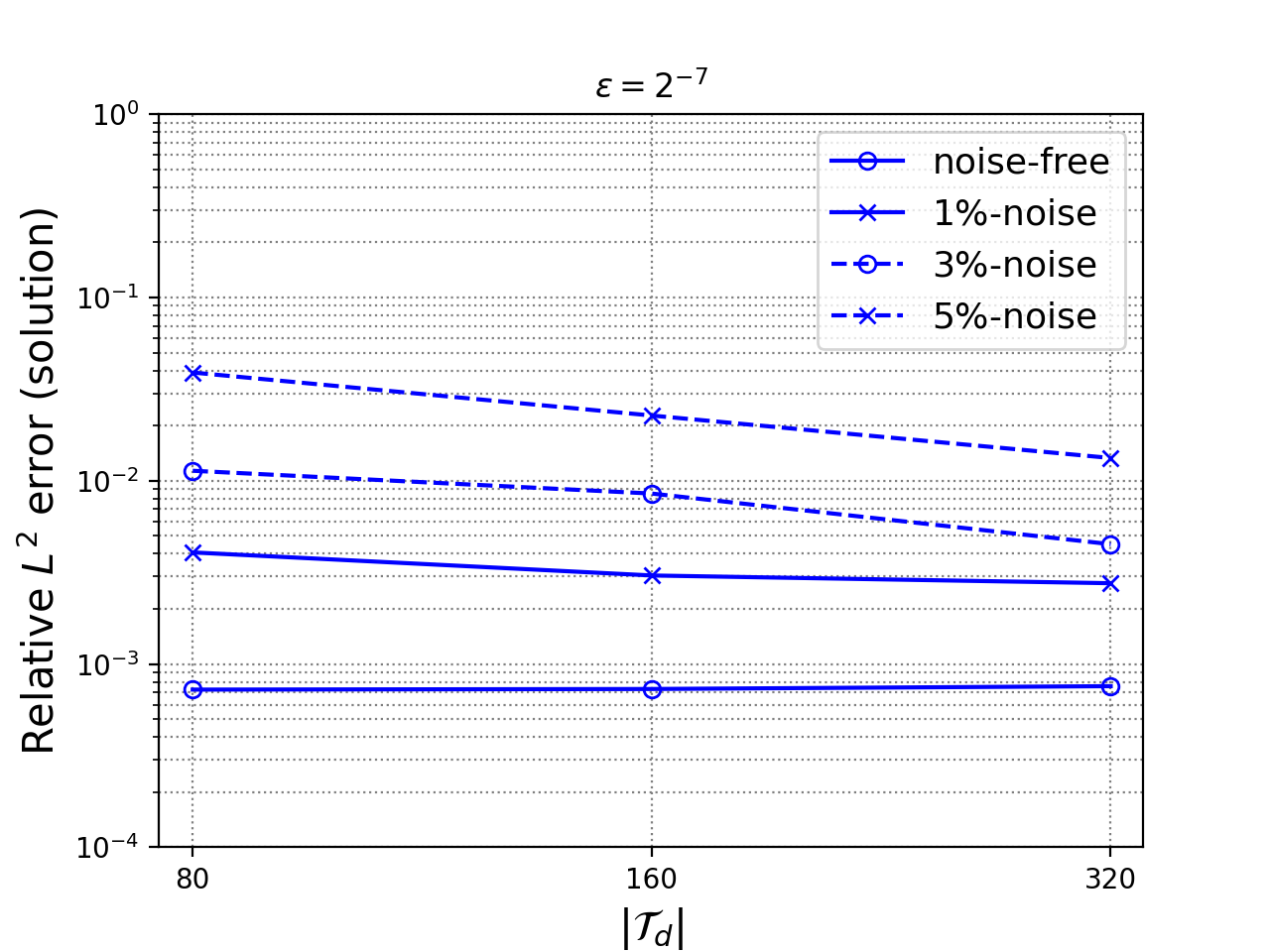}
  \caption{Homogenized solutions}%with $h=1/100$,\\ $\kappa^*_{FEM} =0.500050879$
  \label{fig:1dlocper_errors_homsol_nd}
 \end{subfigure}
 \vspace*{-4mm}
 \caption{Problem (\ref{eq:1dlocper}): the relative $L^2$ errors for the G-limits and the homogenized solutions  with different number of multiscale data points corrupted by different noise levels for $\ep = 2^{-7}$ and the number of PDE residual points is $|{\mathcal T}_r| = |{\mathcal T}_d| +30$.}
\label{figure:glimitsoltdns1}
\end{figure}
\begin{figure}[!hbt]
	\centering
	%  \captionsetup{justification=centering}
   	  \begin{subfigure}{0.35\textwidth}
  \includegraphics[width=\textwidth]{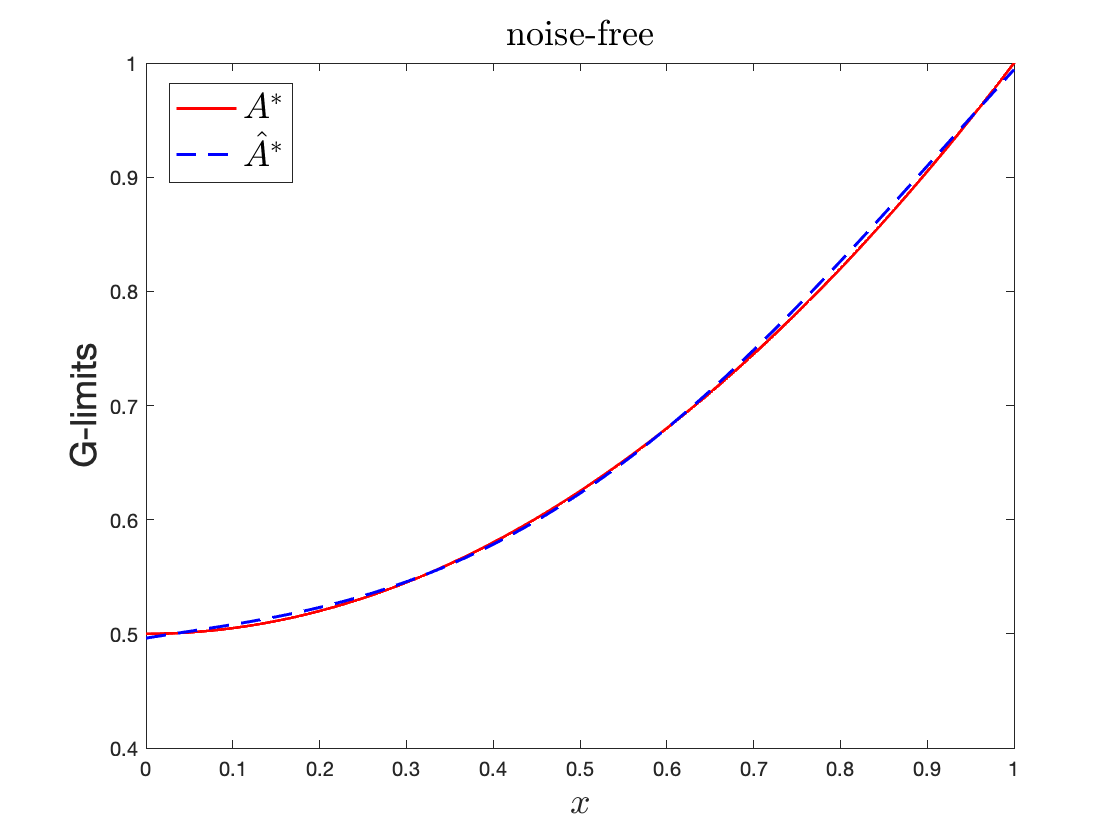}
  \caption{G-limits (noise-free)}%with $h=1/100$,\\ $\kappa^*_{FEM} =0.500050879$
  \label{fig:1dlocper_plot_glimit_ums1_sc}
 \end{subfigure}
  \hspace{-.25in}
		\begin{subfigure}{0.35\textwidth}
  \includegraphics[width=\textwidth]{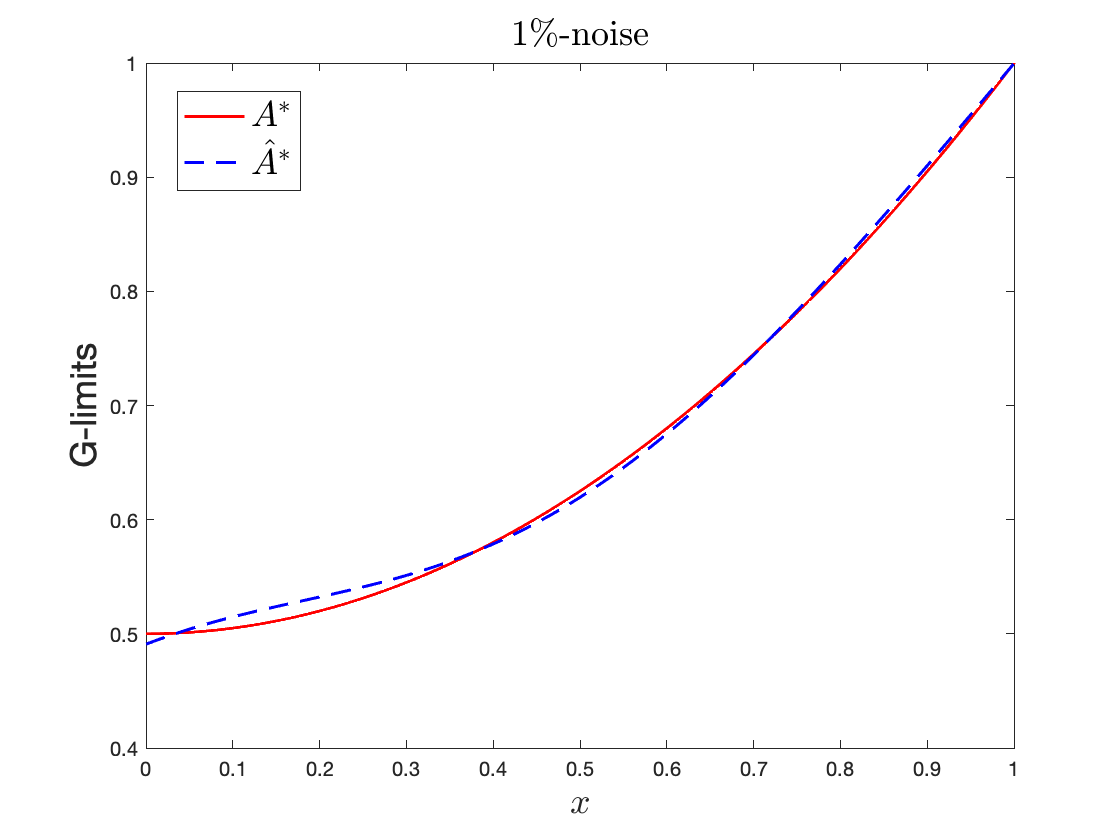}
  \caption{G-limits ($1\%$-noise)}%with width = 32, depth = 2 $\kappa_{nn} =0.500019
  \label{fig:1dlocper_plot_kappa_ums1_ns1_sc}
\end{subfigure}
  \hspace{-.25in}
	\begin{subfigure}{0.35\textwidth}
  \includegraphics[width=\textwidth]{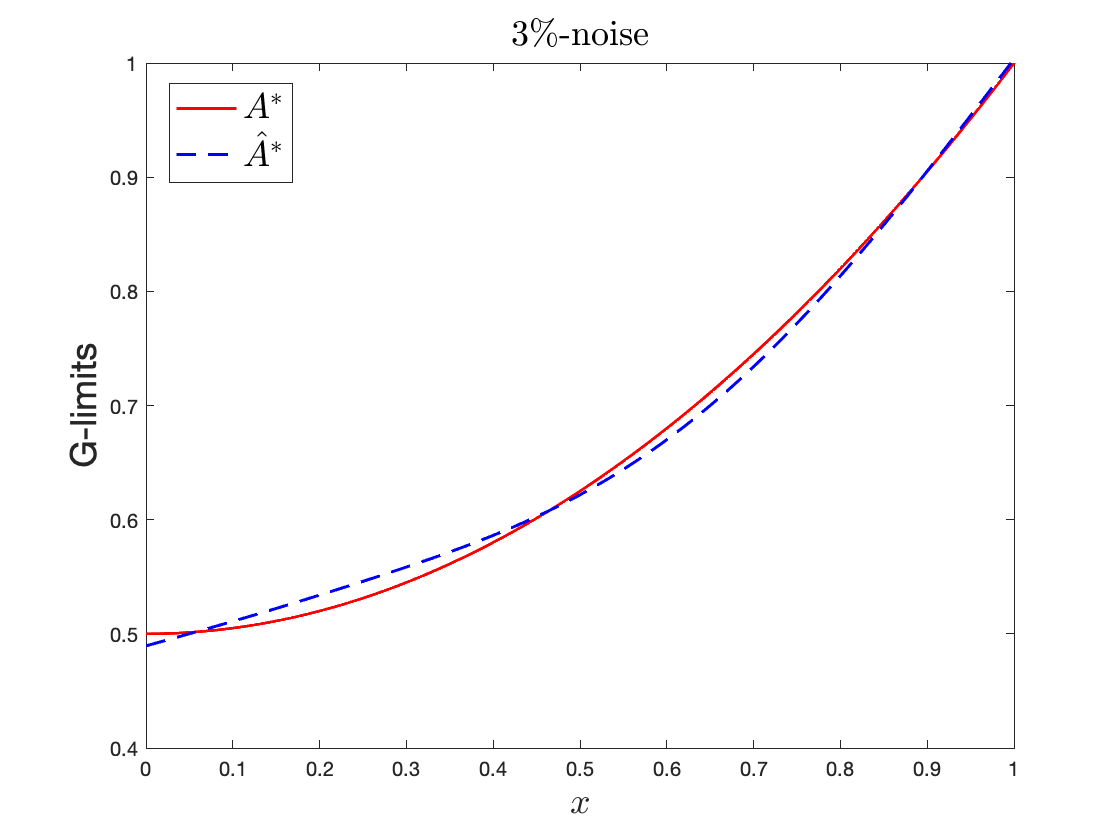}
  \caption{G-limits ($3\%$-noise)}%with width = 32, depth = 2 $\kappa_{nn} =0.500019
  \label{fig:1dlocper_plot_kappa_ums1_ns3_sc}
\end{subfigure}
  \hspace{-.25in}
  \begin{subfigure}{0.35\textwidth}
  \includegraphics[width=\textwidth]{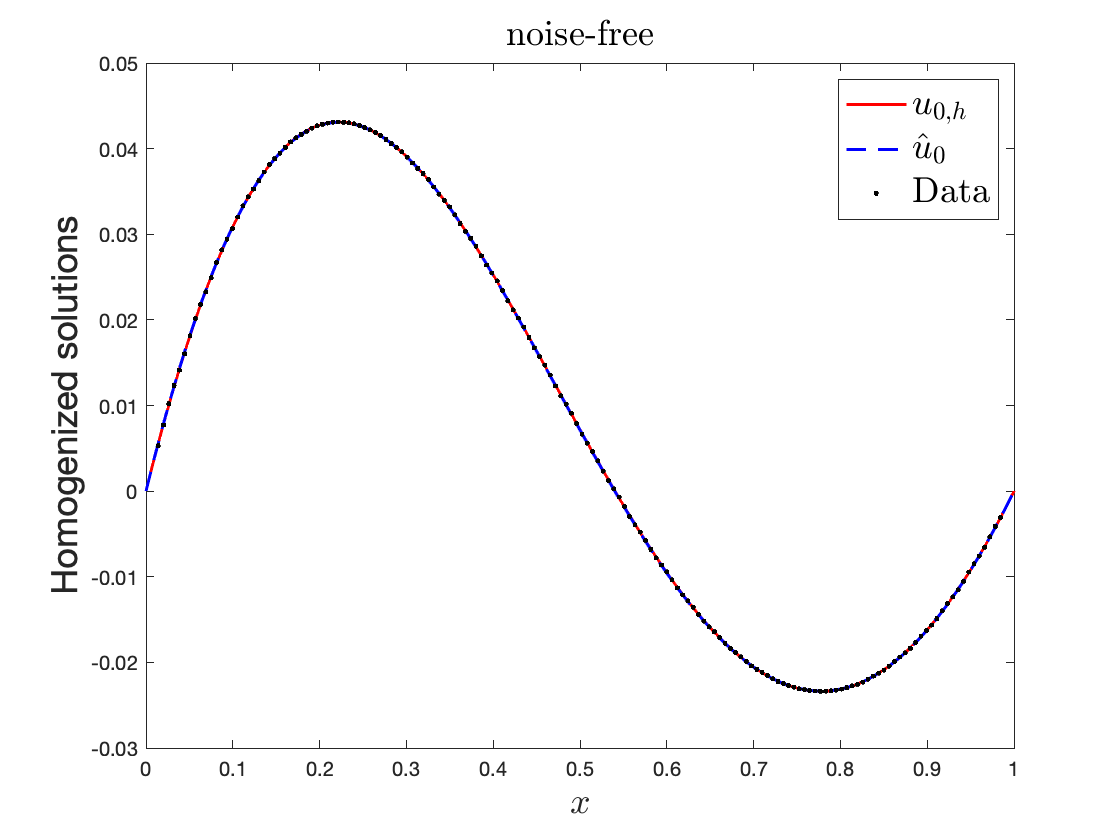}
   \caption{Solutions (noise-free)}
   \label{fig:1dlocper_plot_data_homsols_ums1_sc}
 \end{subfigure}
   \hspace{-.25in}
  \begin{subfigure}{0.35\textwidth}
  \includegraphics[width=\textwidth]{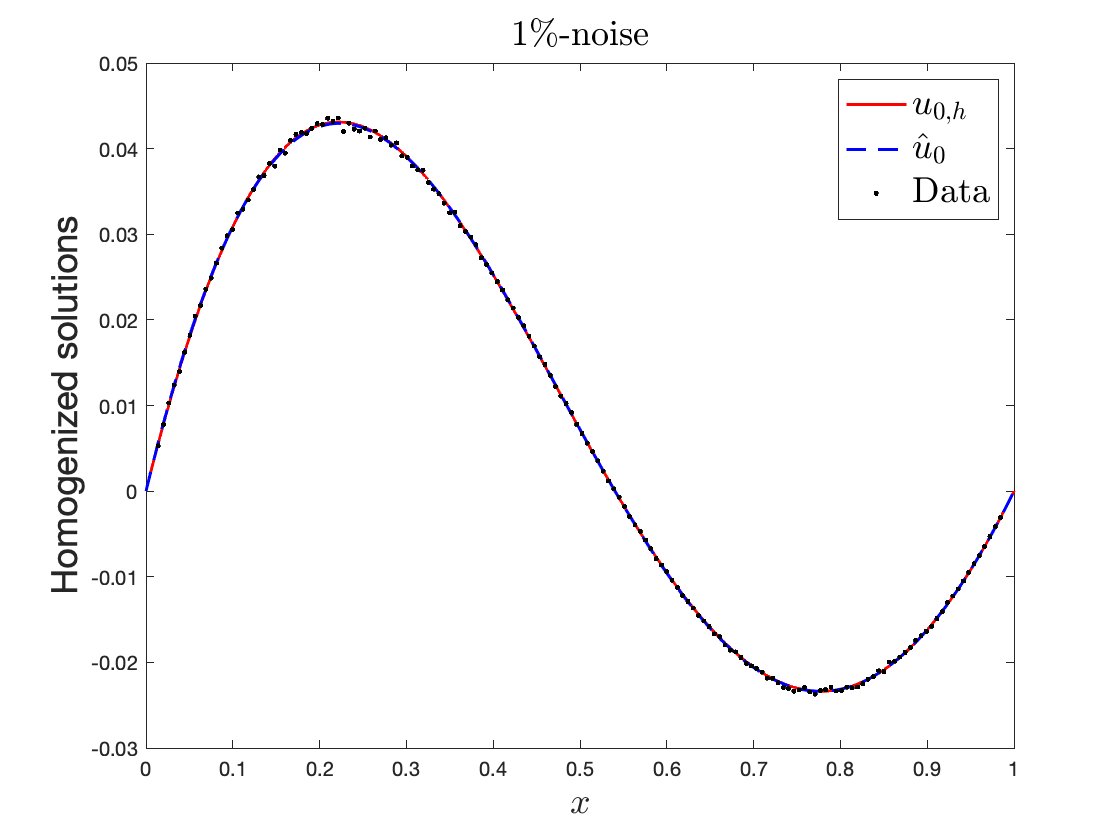}
  \caption{Solutions ($1\%$-noise)}%with $h=1/100$,\\ $\kappa^*_{FEM} =0.500050879$
  \label{fig:1dlocper_plot_data_homsols_ums1_ns1_sc}
 \end{subfigure}
  \hspace{-.25in}
  \begin{subfigure}{0.35\textwidth}
  \includegraphics[width=\textwidth]{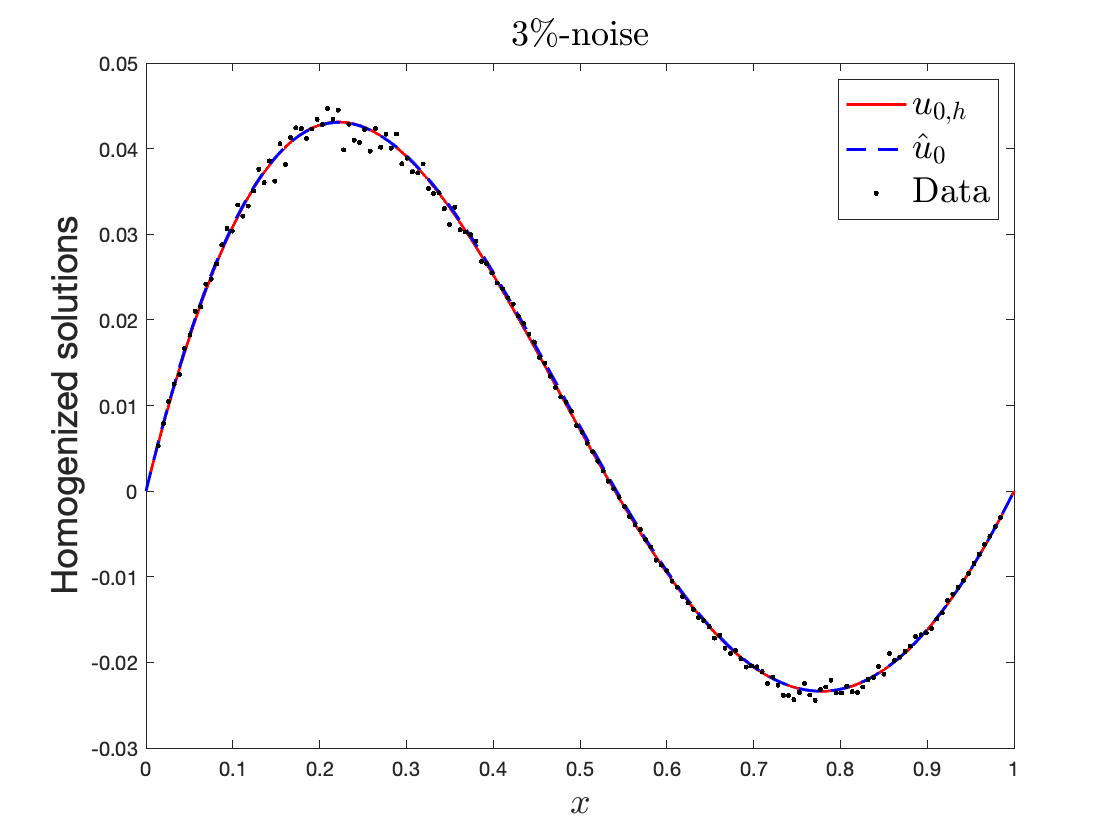}
  \caption{Solutions ($3\%$-noise)}%with $h=1/100$,\\ $\kappa^*_{FEM} =0.500050879$
  \label{fig:1dlocper_plot_data_homsols_ums1_ns3_sc}
 \end{subfigure}
 \vspace*{-4mm}
 \caption{Error results for problem (\ref{eq:1dlocper}):   comparison of  the reference solutions ($A^*(x)$ and $u_{0,h}(x)$)  and the counterparts  ($\hat{A}^*(x)$ and $\hat{u}_{0}(x)$) learned by PINNs  with different noise levels in the data. (a. b. c.):  the  G-limit; (d. e. f.):  the homogenized solution, where $\ep = 2^{-7}$ and the number of multiscale data and PDE residual points are $|{\mathcal T}_d| = 160$, $|{\mathcal T}_r| = 190$.}
 %(a. b. c.): The exact G-limit $A^*(x)$ and the G-limit via PINNs $\hat{A}^*(x)$ with different level of noise; (d. e. f.): Homogenized solution via FEM $u_{0,h}(x)$ and PINNs $\hat{u}_{0}(x)$ with different noise level for $\ep = 2^{-7}$ and the number of data and PDE residual points are $|{\mathcal T}_d| = 160$, $|{\mathcal T}_r| = 190$.}
\label{figure:1dlocper_plots_ns}
 \vspace*{-5mm}
\end{figure}

\begin{figure}[!htb]
	\centering
	%  \captionsetup{justification=centering}
	\begin{subfigure}{0.40\textwidth}
  \includegraphics[width=\textwidth]{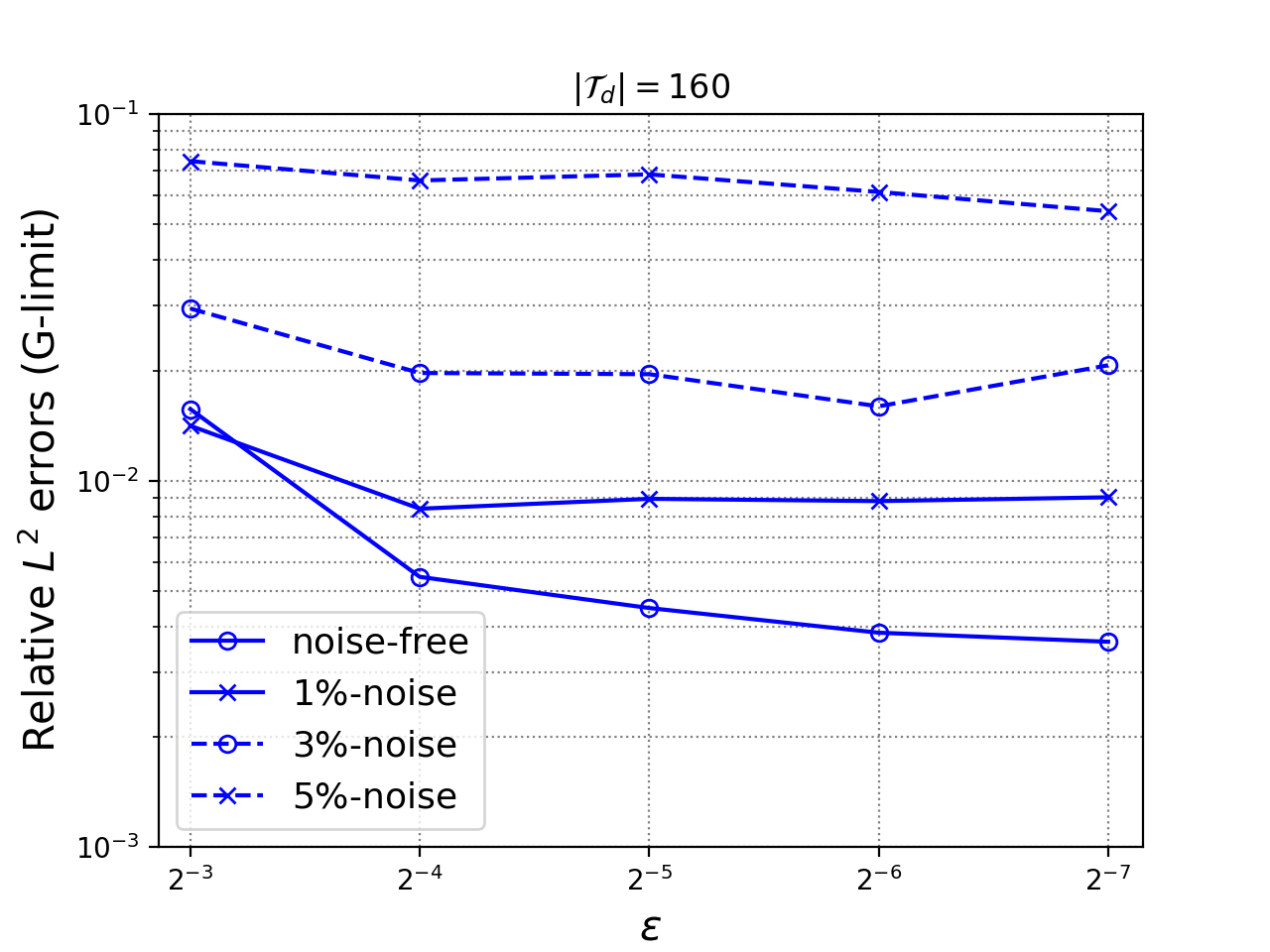}
  \caption{G-limits}%with width = 32, depth = 2 $\kappa_{nn} =0.500019
  \label{fig:1dlocper_errors_glimit_ep}
\end{subfigure}
\hspace{.3in}
  \begin{subfigure}{0.40\textwidth}
  \includegraphics[width=\textwidth]{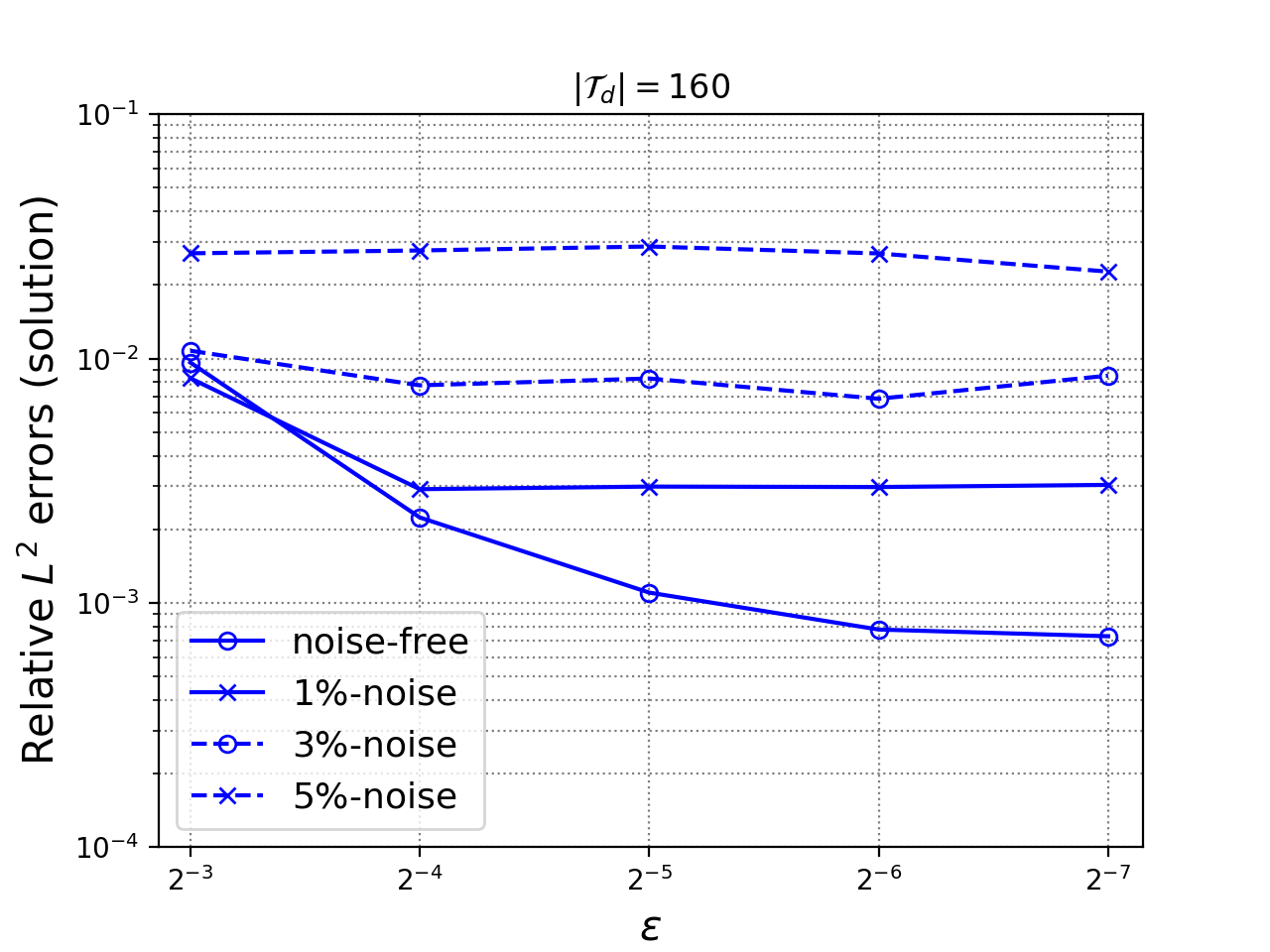}
  \caption{Homogenized solutions}%with $h=1/100$,\\ $\kappa^*_{FEM} =0.500050879$
  \label{fig:1dlocper_errors_homsol_ep}
 \end{subfigure}
  \vspace*{-4mm}
 \caption{Error results for problem (\ref{eq:1dlocper}): the relative $L^2$ errors for the G-limits and the homogenized solutions  with  different finescale parameter $\ep$ and noise levels,  when the number of  multiscale  data and PDE residual points are  $|{\mathcal T}_d| = 160$ and $|{\mathcal T}_r| = 190$.}
\label{figure:glimitsolepns1}
  \vspace*{-3mm}
\end{figure}
  \begin{figure}[!htb]
 	\centering
 	  \begin{subfigure}{0.35\textwidth}
  \includegraphics[width=\textwidth]{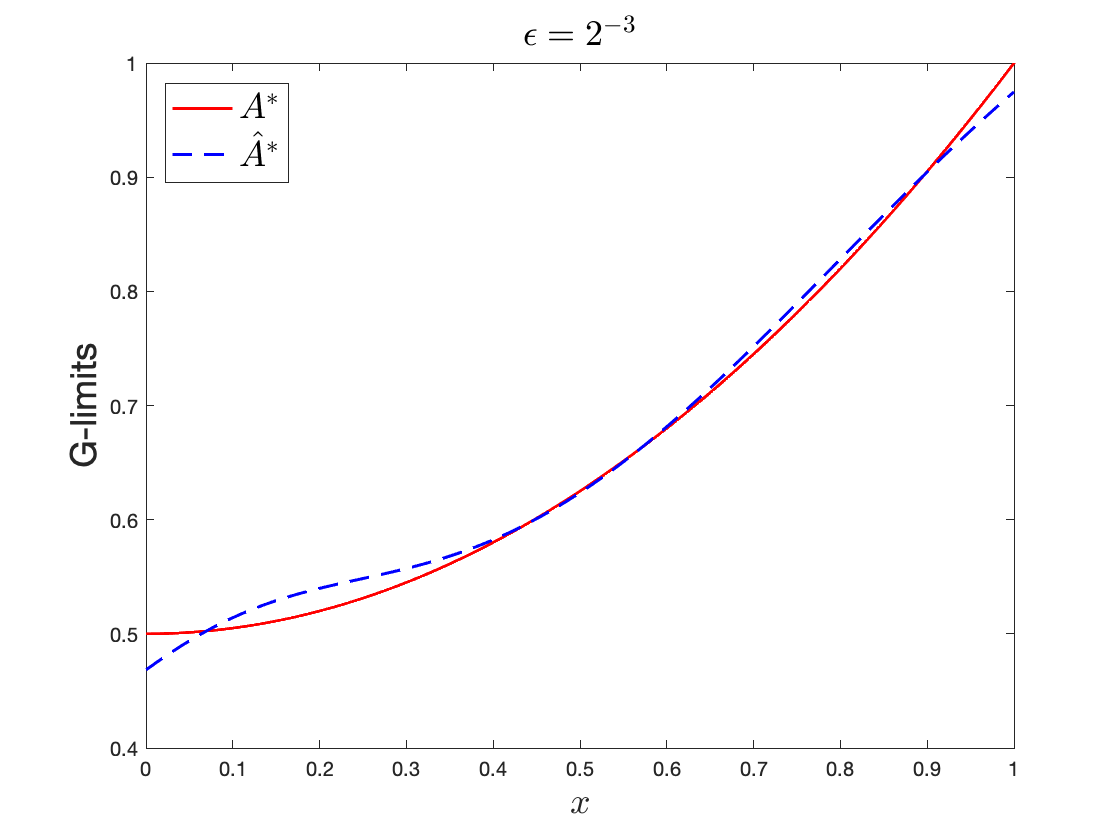}
  \caption{G-limit ($\ep = 2^{-3}$)}%with $h=1/100$,\\ $\kappa^*_{FEM} =0.500050879$
  \label{fig:1dlocper_plot_glimit_ums1_ep3}
 \end{subfigure}
 \hspace{-.25in}
   	  \begin{subfigure}{0.35\textwidth}
  \includegraphics[width=\textwidth]{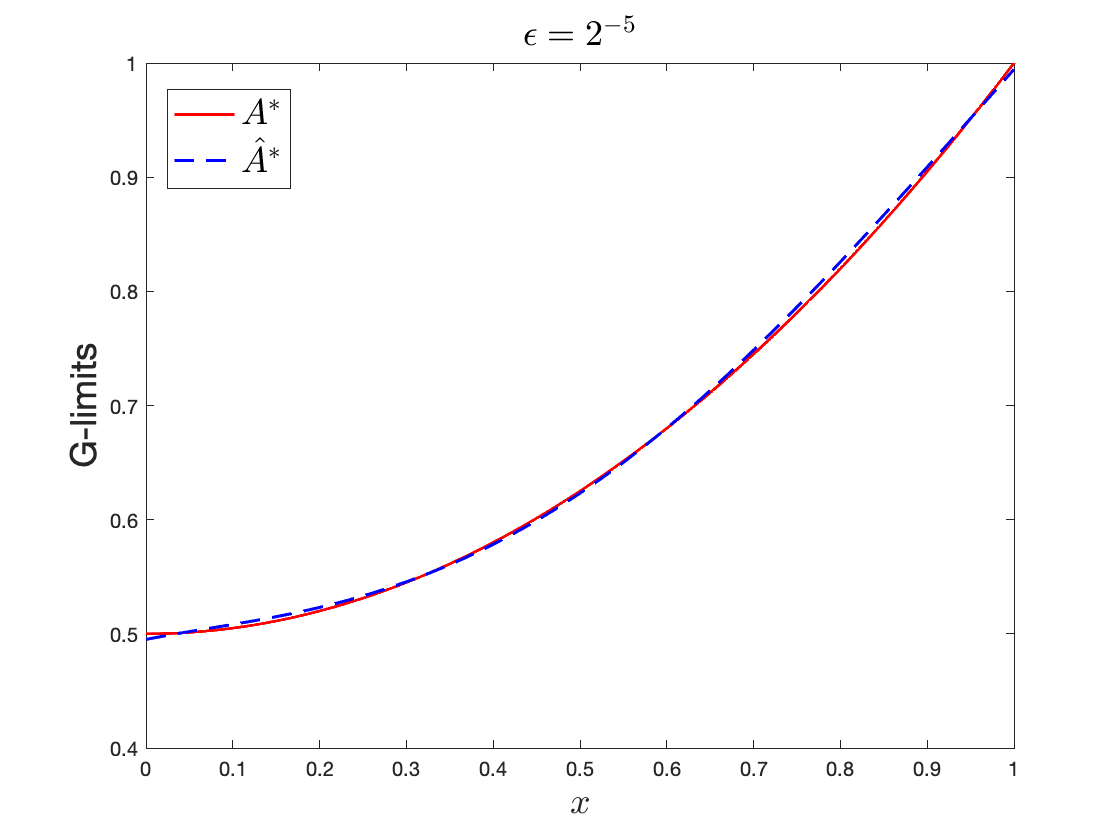}
  \caption{G-limit ($\ep = 2^{-5}$)}%with $h=1/100$,\\ $\kappa^*_{FEM} =0.500050879$
  \label{fig:1dlocper_plot_glimit_ums1_ep5}
 \end{subfigure}
 %	\begin{subfigure}{0.45\textwidth}
 \hspace{-.25in}
   	  \begin{subfigure}{0.35\textwidth}
  \includegraphics[width=\textwidth]{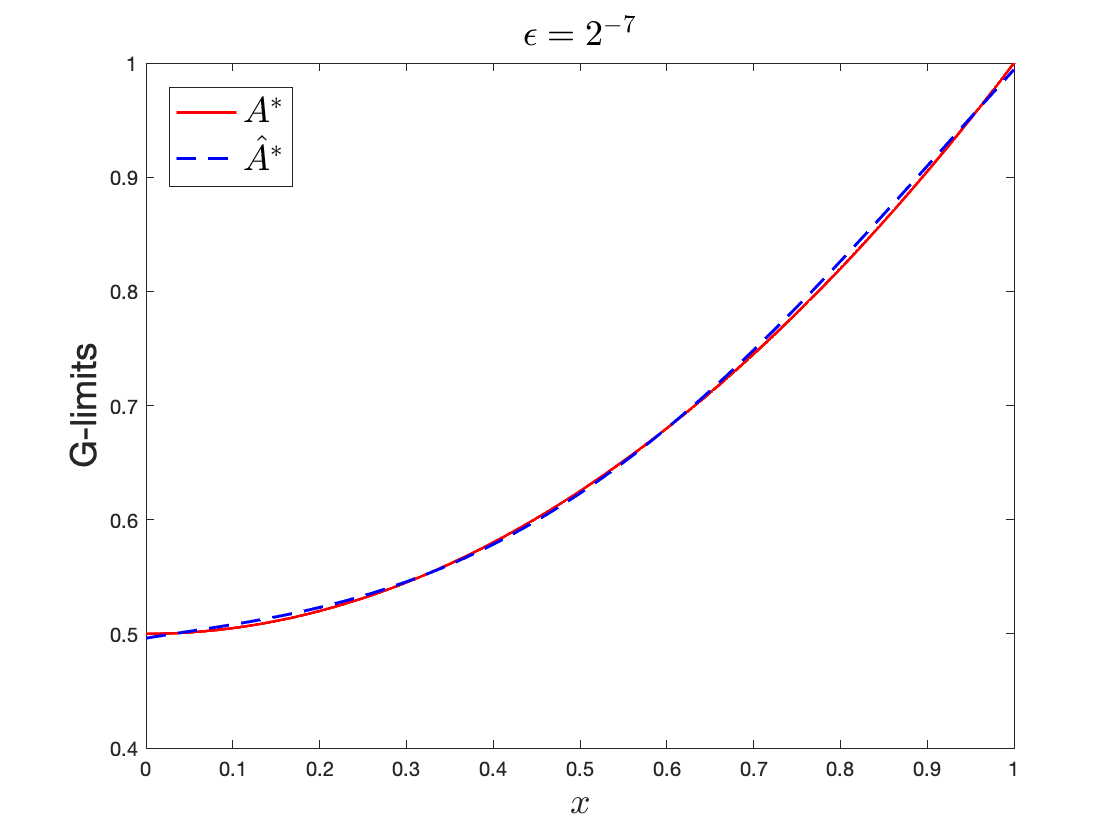}
  \caption{G-limit ($\ep = 2^{-7}$)}%with $h=1/100$,\\ $\kappa^*_{FEM} =0.500050879$
  \label{fig:1dlocper_plot_glimit_ums1}
 \end{subfigure}
  \hspace{-.25in}
   	  \begin{subfigure}{0.35\textwidth}
  \includegraphics[width=\textwidth]{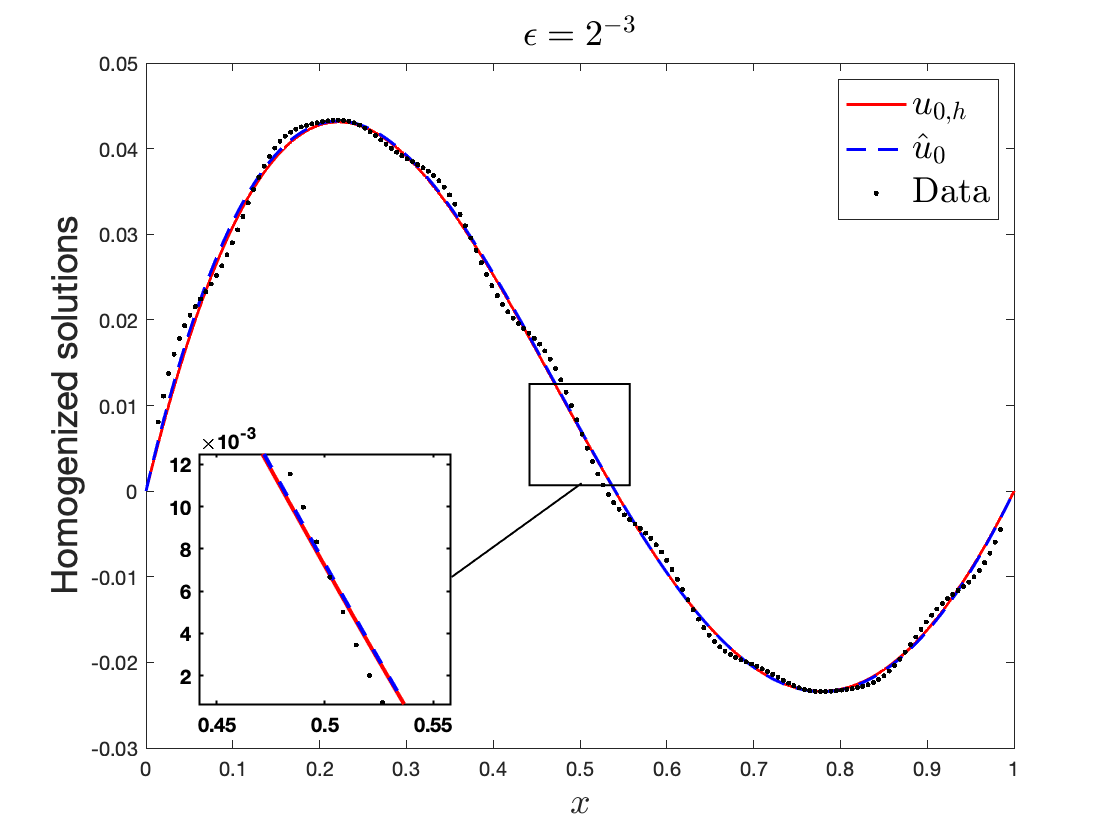}
   \caption{Solutions ($\ep = 2^{-3}$)}
   \label{fig:1dlocper_plot_data_homsols_ums1_ep3}
 \end{subfigure}
 \hspace{-.25in}
 %	\begin{subfigure}{0.45\textwidth}
  	  \begin{subfigure}{0.35\textwidth}
  \includegraphics[width=\textwidth]{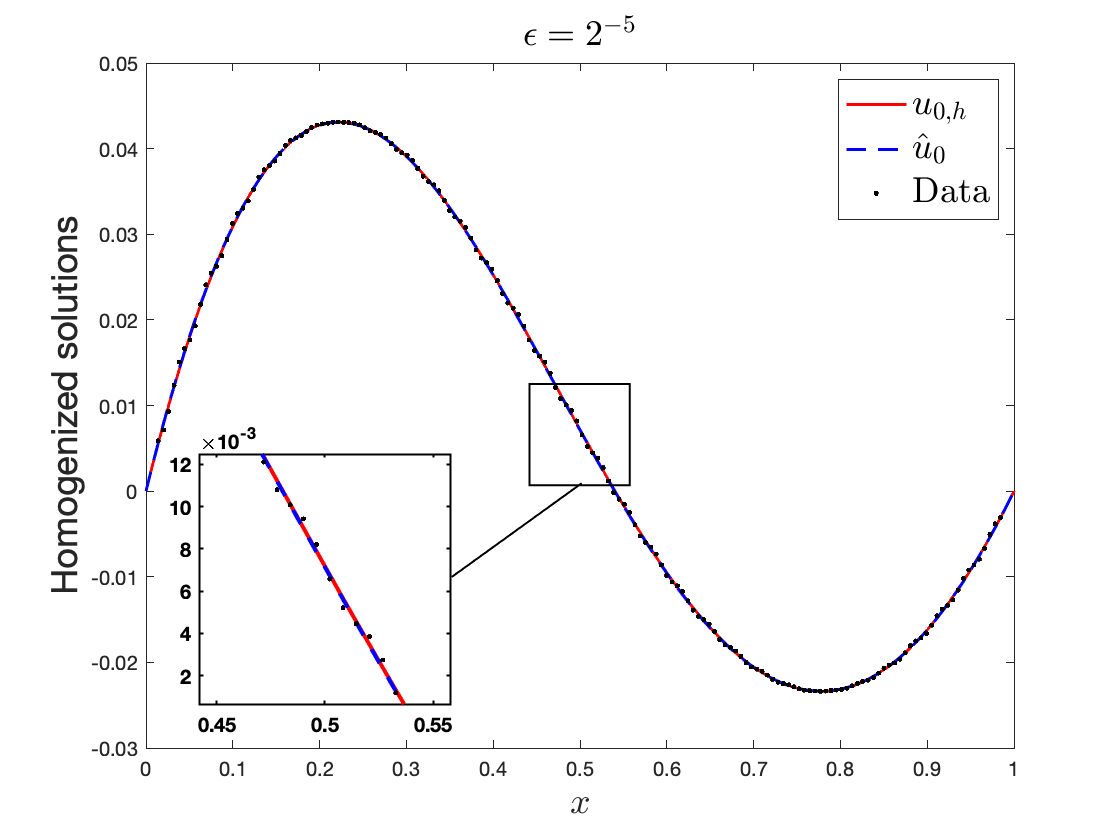}
   \caption{Solutions ($\ep = 2^{-5}$)}
   \label{fig:1dlocper_plot_data_homsols_ums1_ep5}
 \end{subfigure}
 \hspace{-.25in}
 %	\begin{subfigure}{0.45\textwidth}
  	  \begin{subfigure}{0.35\textwidth}
  \includegraphics[width=\textwidth]{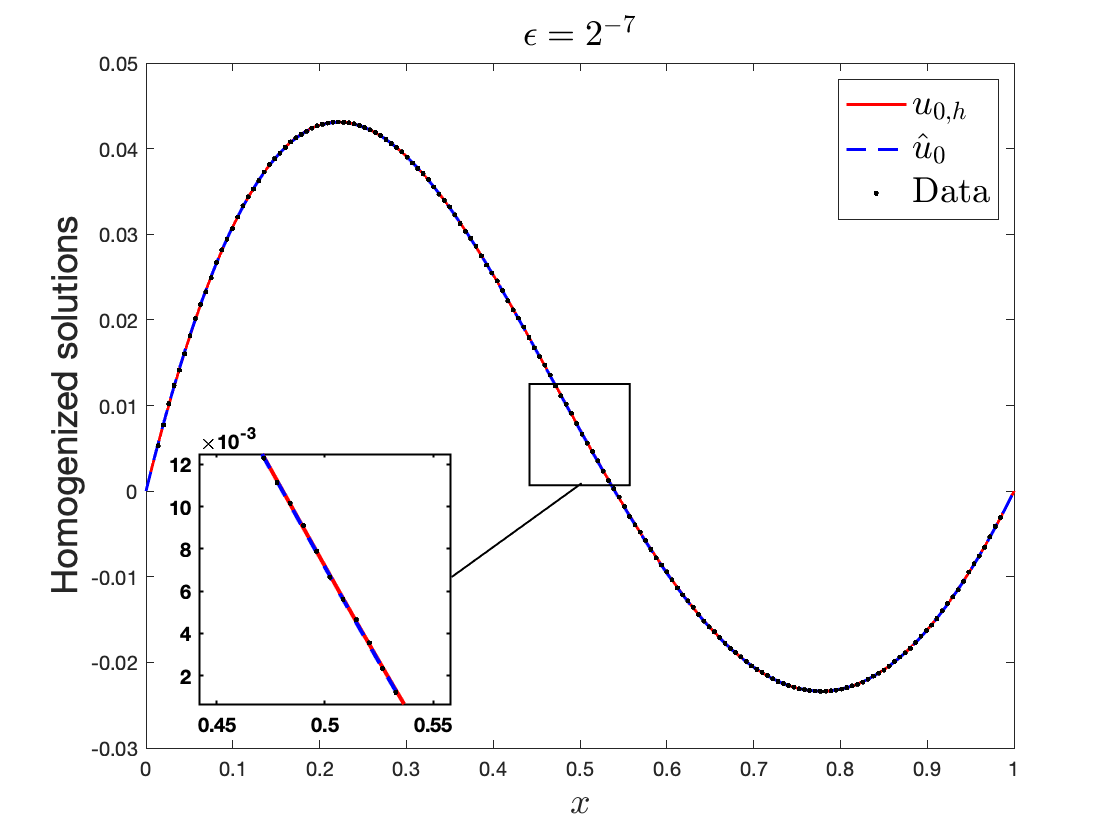}
   \caption{Solutions ($\ep = 2^{-7}$)}
   \label{fig:1dlocper_plot_data_homsols_ums1}
 \end{subfigure}
  \vspace*{-4mm}
  \caption{Problem (\ref{eq:1dlocper}) with noise-free data:  comparison of  the reference solutions ($A^*(x)$ and $u_{0,h}(x)$)  and the counterparts ($\hat{A}^*(x)$ and $\hat{u}_{0}(x)$) learned by PINNs with different values of $\ep$. (a. b. c.):  the  G-limit; (d. e. f.):  the homogenized solution, when the number of  multiscale  data and PDE residual points are $|{\mathcal T}_d| = 160$, $|{\mathcal T}_r| = 190$.}
  %(a. b. c.): The exact G-limit $A^*(x)$ and the G-limit via PINNs $\hat{A}^*(x)$ with different values of $\ep$;
  %(d. e. f.): Homogenized solution via FEM $u_{0,h}(x)$ and PINNs $\hat{u}_{0}(x)$, and the noise-free data with different values of $\ep$; The number of data and PDE residual points are $|{\mathcal T}_d| = 160$, $|{\mathcal T}_r| = 190$. \textcolor{red}{check figure f, upper right corner above the legend}}%with width = 32, depth = 2
  \label{figure:1dlocper_plot_ums1_ep}
   \vspace*{-4mm}
  \end{figure}

To investigate the impacts of the finescale parameter $\ep$ of the multiscale data on the performance of our algorithm, we plot the relative errors for the G-limit and the homogenized solution with different values of $\ep$ in Figure \ref{figure:glimitsolepns1}. 
For each $\epsilon$ value,   $|{\mathcal T}_d|=160$ multiscale solution data  collected at fixed spatial locations are used.
In noise-free cases, the errors for homogenized solutions tend to decrease when $\ep$ becomes smaller.
 %but quickly staturated at a reasonably large $\epsilon$. 
 This is expected as  multiscale solution data are closer to the homogenized solution for smaller $\ep$. %, due to 
%the convergence of the multiscale solution to the homogenized solution as $\ep \to 0$ in equation (\ref{eq:linftyconv}).  

 %(\textcolor{red}{can you specify the equation or thm number?}). 
 Figure \ref{figure:1dlocper_plot_ums1_ep} further shows the learned G-limit and homogenized solution with different finescale parameters $\ep$.
 We can observe that the multiscale data converge to the reference homogenized solution as $\ep$ becomes smaller. 
For example, when $\ep = 2^{-7}$, our data almost overlap with the reference homogenized solution (Figure \ref{fig:1dlocper_plot_data_homsols_ums1}) and both G-limit and the homogenized solution learned by PINNs agree very well with their references. 
%This results in the error convergence with noise-free data shown in Figure \ref{figure:glimitsolepns1}.
Furthermore, despite the presence of noticeable multiscale oscillations  in the data, PINNs can still provide reasonably good results for larger epsilons ($\ep = 2^{-3}, \ 2^{-5}$).  This is because the proposed PINN tends to promote the smooth macroscale behavior of the data rather than their microscale fluctuations shown in Figure \ref{fig:1dlocper_plot_data_homsols_ums1_ep3} and \ref{fig:1dlocper_plot_data_homsols_ums1_ep5}.

For noisy scenarios, the approximation quality deteriorates with the noise level as seen in Figure \ref{figure:glimitsolepns1}. 
We also observe that the impact of the finescale size $\ep$ of the medium  becomes negligible once the noise level is large enough, suggesting that the magnitude of the noises is dominant over the multiscale oscillations in our data. %This often happens in practical applications when we have very small size of finescale. Nonetheless, %as our goal is to recover the G-limit and smooth homogenized solution that only 
Nonetheless, our approach can still provide reasonably good approximations under a mild noise level. 

\subsection{Homogenization of a heavily oscillatory coefficient}
Next, we consider the following  elliptic equation with a heavily oscillatory permeability coefficient introduced in \cite{floden2009g}:
\beq
\label{eq:1nonpermulti}
\bsp
-\frac{d}{d x} \bigg(A^\ep(x) \frac{d}{d x} u^\ep(x)\bigg)&= 3+\sin(x)\ \ \textrm{in} \ \ \Omega \in [0,1],\\
 u^\ep(0)&=0, \ u^\ep(1)  = 0,
\end{split}
\eeq
where $A^\ep(x) = \int_Y \left(1 +\frac{1}{2} \sin \left(\left( y + \frac{1}{2\ep}\sin\left(\pi \sqrt{\frac{2}{\ep}}x\right)\right)^2\right)\right) e^{y(1+\sin x)}dy$. 
%We would like to find the following homogenized %equation corresponding to %(\ref{eq:1nonpermulti}).
%\beq
%\label{eq:1nonperhom}
%\bsp
%-\frac{\partial}{\partial x} \bigg(A^*(x) \frac{\partial}{\partial x} u_0(x)\bigg)= 3+\sin(x)\ \ \textrm{in} \ \ \Omega,\\
% u_0(0)=0, \ u_0(1)  = 0,
%\end{split}
%\eeq
The coefficient $A^\ep(x)$ is quite oscillatory. 
Figure \ref{figure:perm_nonper} illustrates the multiscale coefficients $A^\ep(x)$ and the effective coefficients $A^*(x)$ for $\ep = 2^{-3}, 2^{-5}$. Due to strong oscillations in the coefficients, direct numerical simulation of this problem is very expensive when the formula for $A^\ep(x)$ is known. %Thus, we seek the corresponding homogenized equation that 
This homogenization problem is in general challenging: (1) The explicit integral of the multiscale coefficient is not available.
(2) This problem cannot be handled by the traditional homogenization method, such as the two-scale convergence method, because the oscillations in $A^\ep(x)$ cannot be captured by any test functions admissible for the two-scale convergence \cite{allaire1992homogenization}. 
For this example, it can be shown that the analytical G-limit coincides with the weak $L^2$ limit of $A^\ep(x)$ given by $A^*(x) = \frac{e^{(1+\sin x)}-1}{1+\sin x}$ \cite{floden2009g}, but this is not the case in general \cite[Chapter1]{allaire2012shape}. 

\begin{figure}[!htb]
	\centering
	\begin{subfigure}{0.45\textwidth}
  \includegraphics[width=\textwidth]{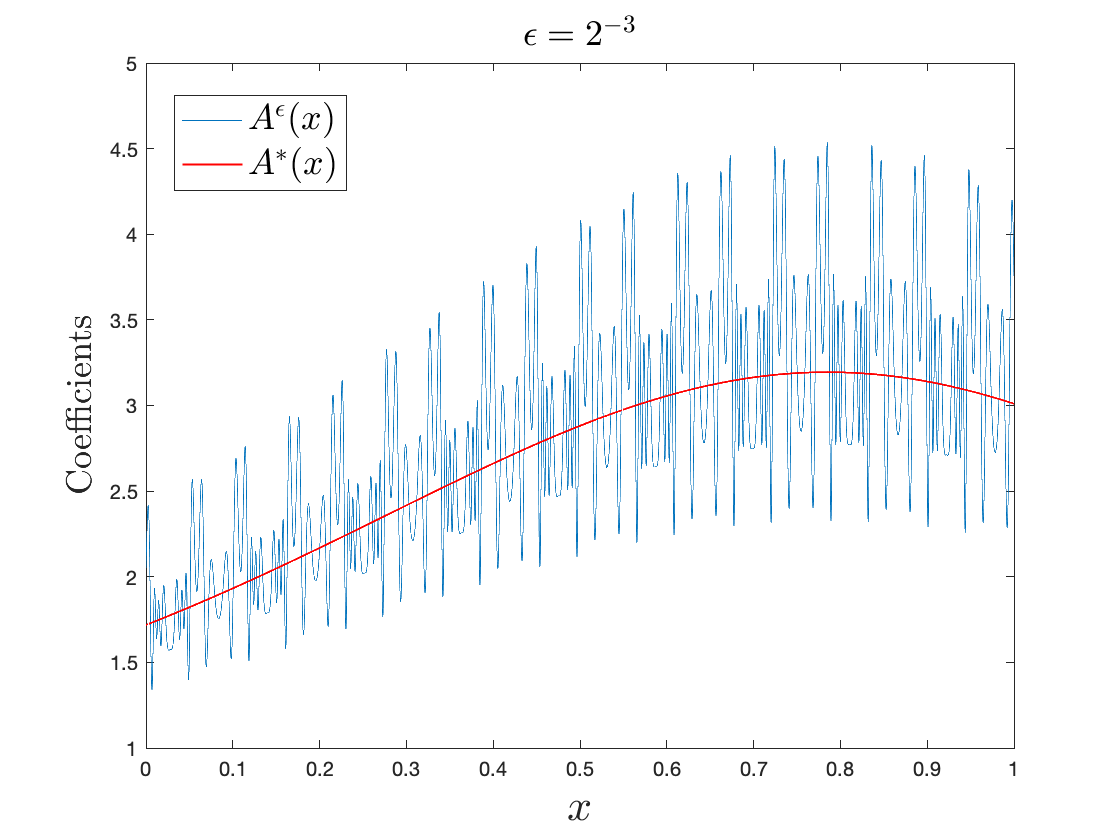}
  \caption{$A^\ep(x)$ and $A^*(x)$, $\ep = 2^{-3}$}%with width = 32, depth = 2 $\kappa^*_{DNN} =0.499953906$0.500019
  \label{perm1_nonper}
\end{subfigure}
\hfill
  \begin{subfigure}{0.45\textwidth}
  \includegraphics[width=\textwidth]{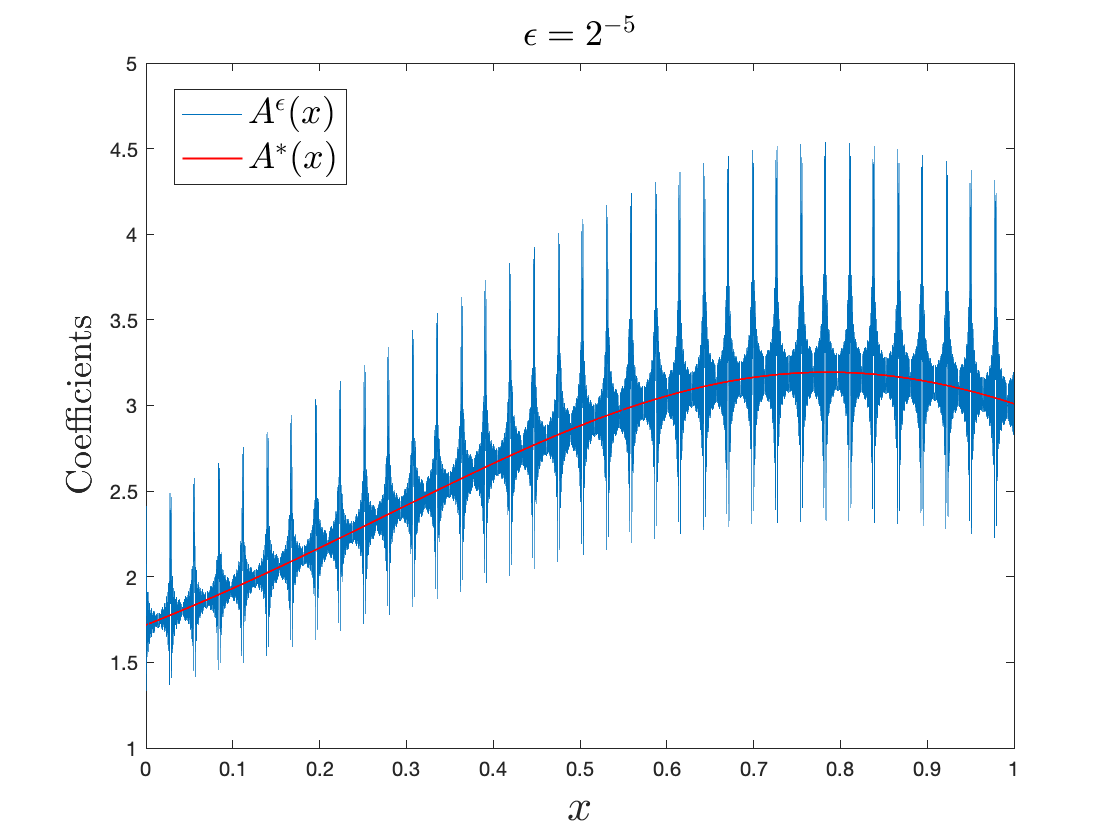}
  \caption{$A^\ep(x)$ and $A^*(x)$, $\ep = 2^{-5}$}%with $h=1/100$,\\ $\kappa^*_{FEM} =0.500050879$
  \label{perm2_nonper}
 \end{subfigure}
 \caption{The G-limits $A^*(x)$ and multiscale coefficients $A^\ep(x)$ with $\ep = 2^{-3}$ and $\ep = 2^{-5}$ for problem (\ref{eq:1nonpermulti})}
  \label{figure:perm_nonper}
 \end{figure}

The synthetic training data set are equally spaced  sampled  from the multiscale solution for each finescale parameter value $\epsilon$ %to the equation (\ref{eq:1nonpermulti})
computed by FEM with a mesh size of $h = 1/10^5$. 
The reference homogenized solution is computed by FEM with the same mesh %$h = 1/10^5$
based on the analytic G-limit. The architecture parameters and other hyperparameters of PINNs are listed in {\it Table \ref{tb:hyperparameters}} in appendix.
The relative $L^2$ errors for both G-limit and homogenized solution are computed based on  the same mesh aforementioned.

We first consider the case with  a relatively small finescale size $\ep = 2^{-7}$.
Figure \ref{figure:glimitsoltdns2} presents the relative $L^2$ errors for the G-limit and homogenized solution with respect to the number of multiscale solution data. With noise-free data, we can achieve an error level of   ${\mathcal O}(10^{-3})$ for both homogenized coefficient and the homogenized solution. It appears that  $80$ multiscale data are enough to obtain  good approximations. % to both G-limit and homogenized solution with noise-free data in this example.
%The results for high-level noise can be improved as we have more data available.
With a high noise level, %the errors decay as the noise level decreases. A,
PINNs can still achieve satisfactory approximations when the data set is large enough. 
%\textcolor{red}{According to the error results, the PINNs can be readily used for this example even with noisy data.}
This can be further supported by  the corresponding G-limit and homogenized solution obtained by PINNs %for $\ep = 2^{-7}$
under different levels of noise corruptions in Figure \ref{figure:1dnonper_plots_ns3}. It is clear that the proposed method can still capture the G-limit and the smooth homogenized solution accurately  under mild noise corruptions. This can be further evidenced  by  Figure \ref{fig:1dnonper_plot_data_homsols_ums1_ns1} and \ref{fig:1dnonper_plot_data_homsols_ums1_ns3} where the learned homogenized solutions tend to fit the macroscopic behavior of the noisy data that is close to the reference solution.

\begin{figure}[!htb]
	\centering
	%  \captionsetup{justification=centering}
	\begin{subfigure}{0.40\textwidth}
  \includegraphics[width=\textwidth]{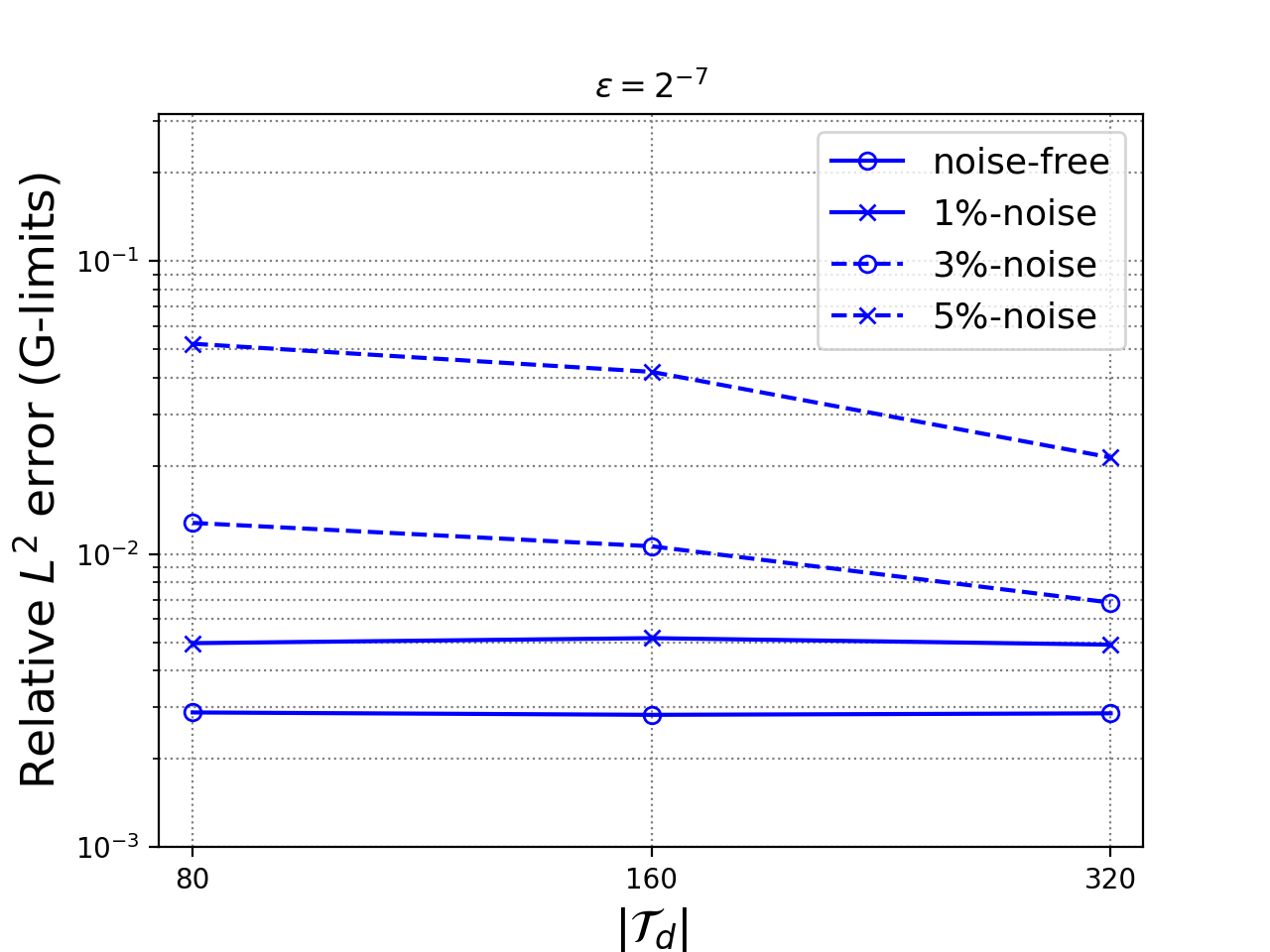}
  \caption{G-limits}%with width = 32, depth = 2 $\kappa_{nn} =0.500019
  \label{fig:1dnonper_errors_glimit_nd}
\end{subfigure}
\hspace{.3in}
  \begin{subfigure}{0.40\textwidth}
  \includegraphics[width=\textwidth]{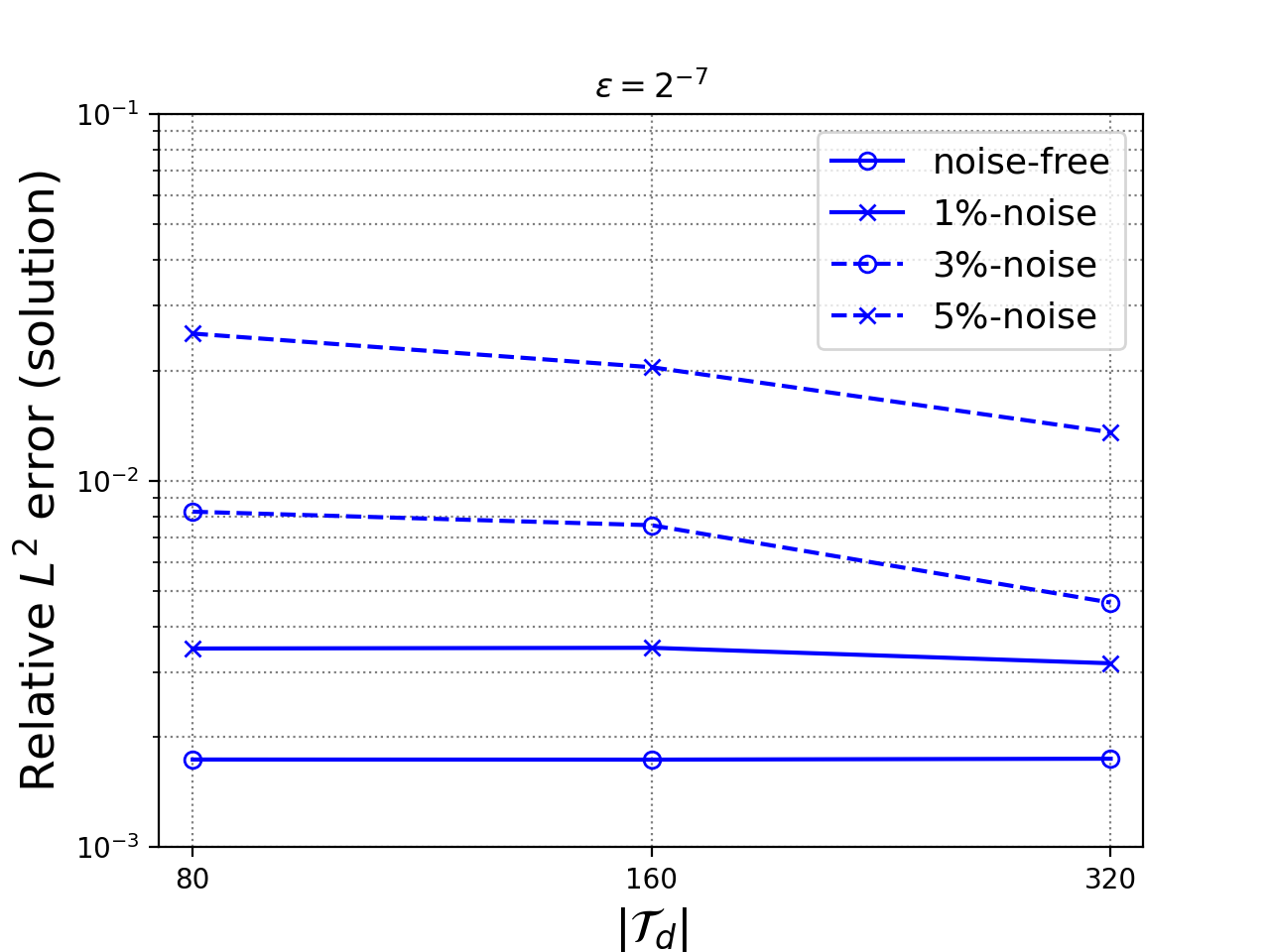}
  \caption{Homogenized solutions}%with $h=1/100$,\\ $\kappa^*_{FEM} =0.500050879$
  \label{fig:1dnonper_errors_homsol_nd}
 \end{subfigure}
 \caption{Error results for problem (\ref{eq:1nonpermulti}): the relative $L^2$ errors for the G-limits  and the homogenized solutions  with different number of multiscale data  corrupted by different noise levels, when  $\ep = 2^{-7}$ and the number of PDE residual points is $|{\mathcal T}_r| = |{\mathcal T}_d| +30$.}
\label{figure:glimitsoltdns2}
\end{figure}

\begin{figure}[!htb]
	\centering
	%  \captionsetup{justification=centering}
   \begin{subfigure}{0.35\textwidth}
  \includegraphics[width=\textwidth]{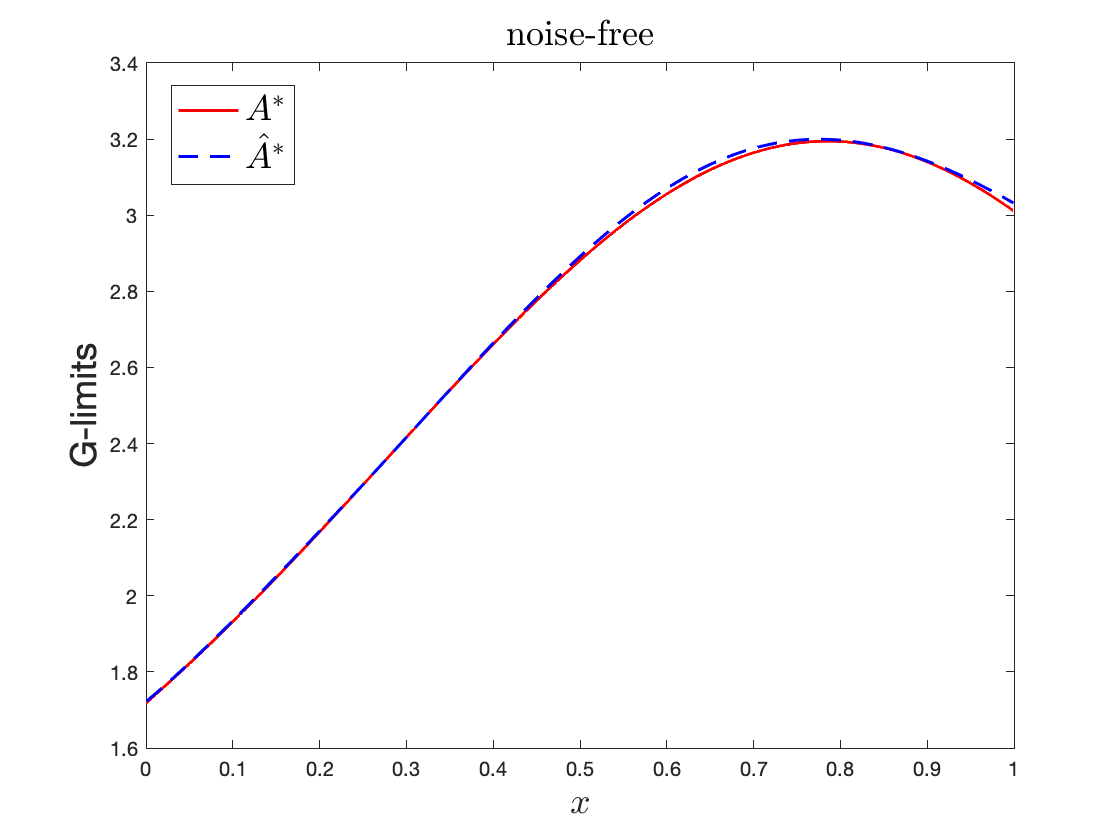}
  \caption{G-limits (noise-free)}%with width = 32, depth = 2 $\kappa_{nn} =0.500019
  \label{fig:1dnonper_plot_kappa_ums11}
\end{subfigure}
\hspace{-.25in}
		\begin{subfigure}{0.35\textwidth}
  \includegraphics[width=\textwidth]{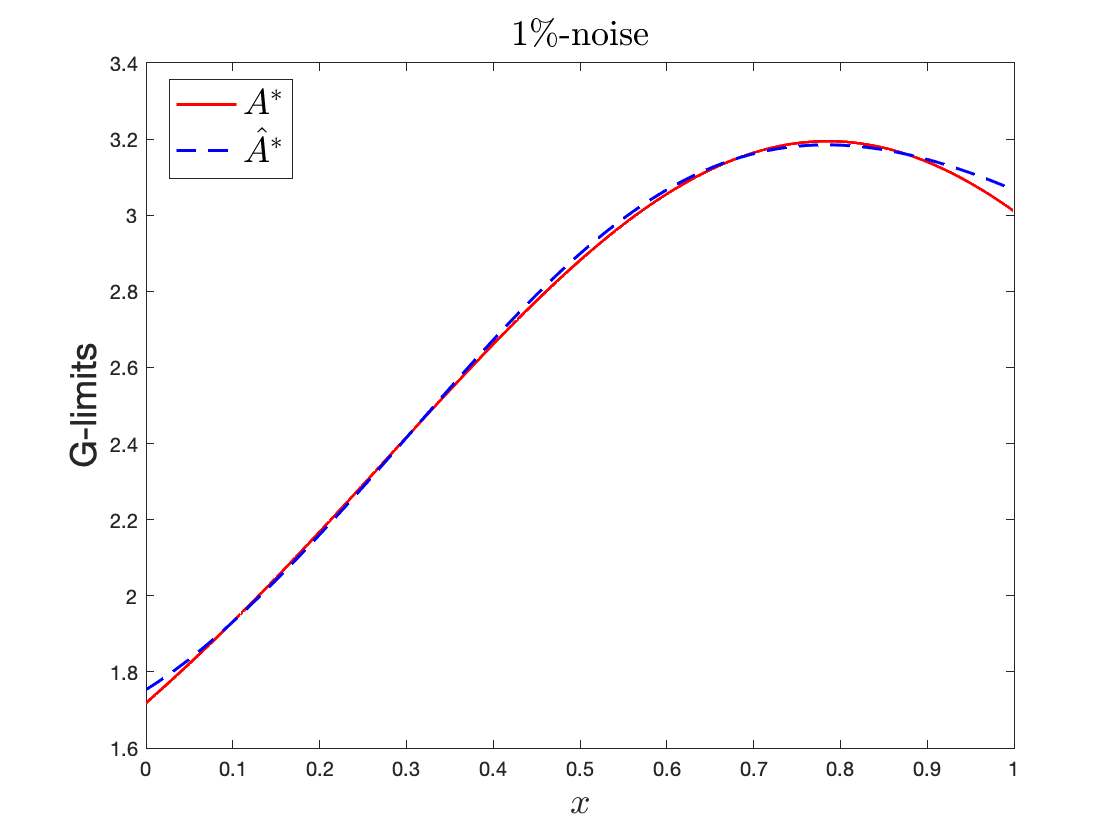}
  \caption{G-limits ($1\%$ noise) }%with width = 32, depth = 2 $\kappa_{nn} =0.500019
  \label{fig:1dnonper_plot_kappa_ums1_ns1}
\end{subfigure}
\hspace{-.25in}
	\begin{subfigure}{0.35\textwidth}
  \includegraphics[width=\textwidth]{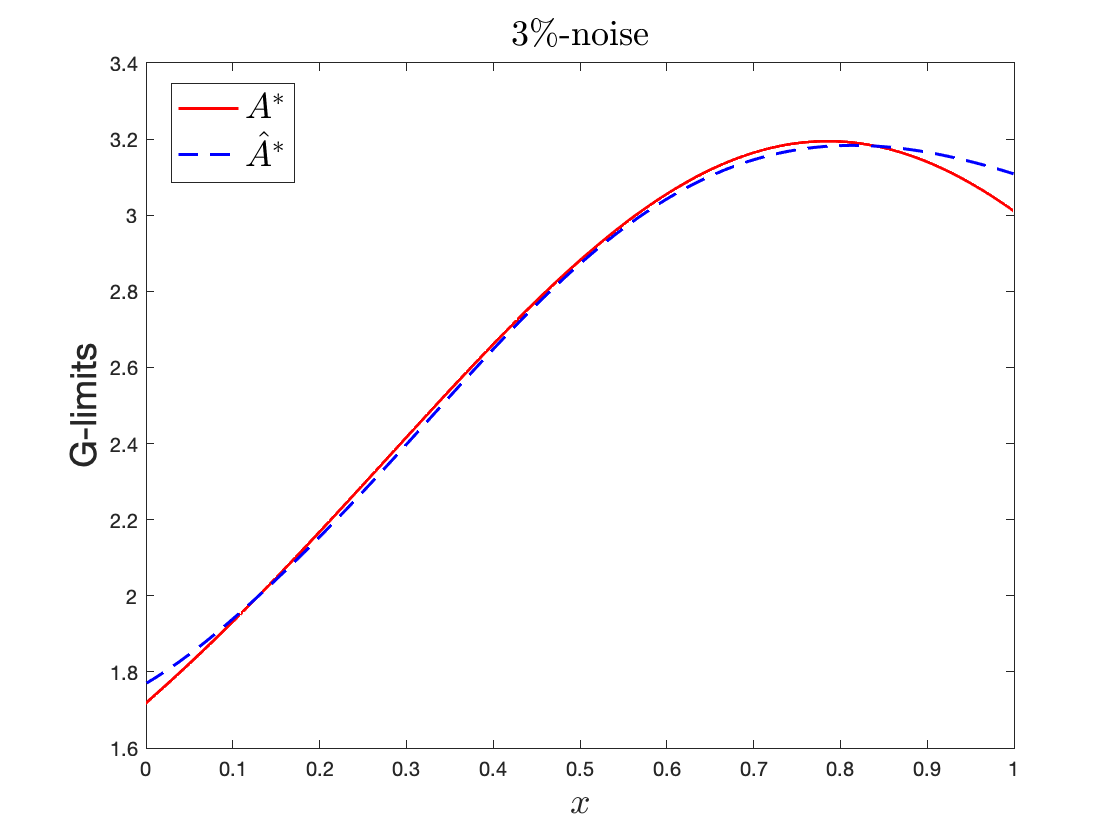}
  \caption{G-limits ($3\%$ noise)}%with width = 32, depth = 2 $\kappa_{nn} =0.500019
  \label{fig:1dnonper_plot_kappa_ums1_ns3}
\end{subfigure}
\hspace{-.25in}
  \begin{subfigure}{0.35\textwidth}
  \includegraphics[width=\textwidth]{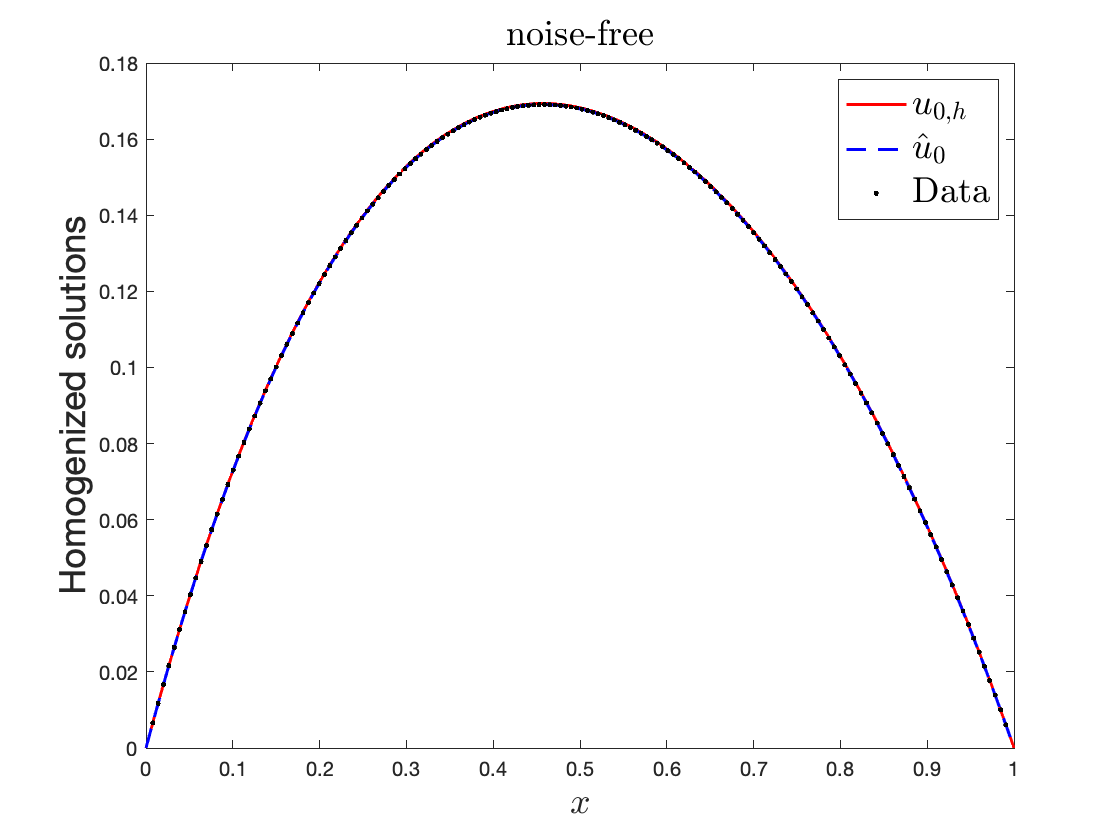}
  \caption{Solutions (noise-free)}%with $h=1/100$,\\ $\kappa^*_{FEM} =0.500050879$
  \label{fig:1dnonper_plot_data_homsols_ums11}
 \end{subfigure}
 \hspace{-.25in}
  \begin{subfigure}{0.35\textwidth}
  \includegraphics[width=\textwidth]{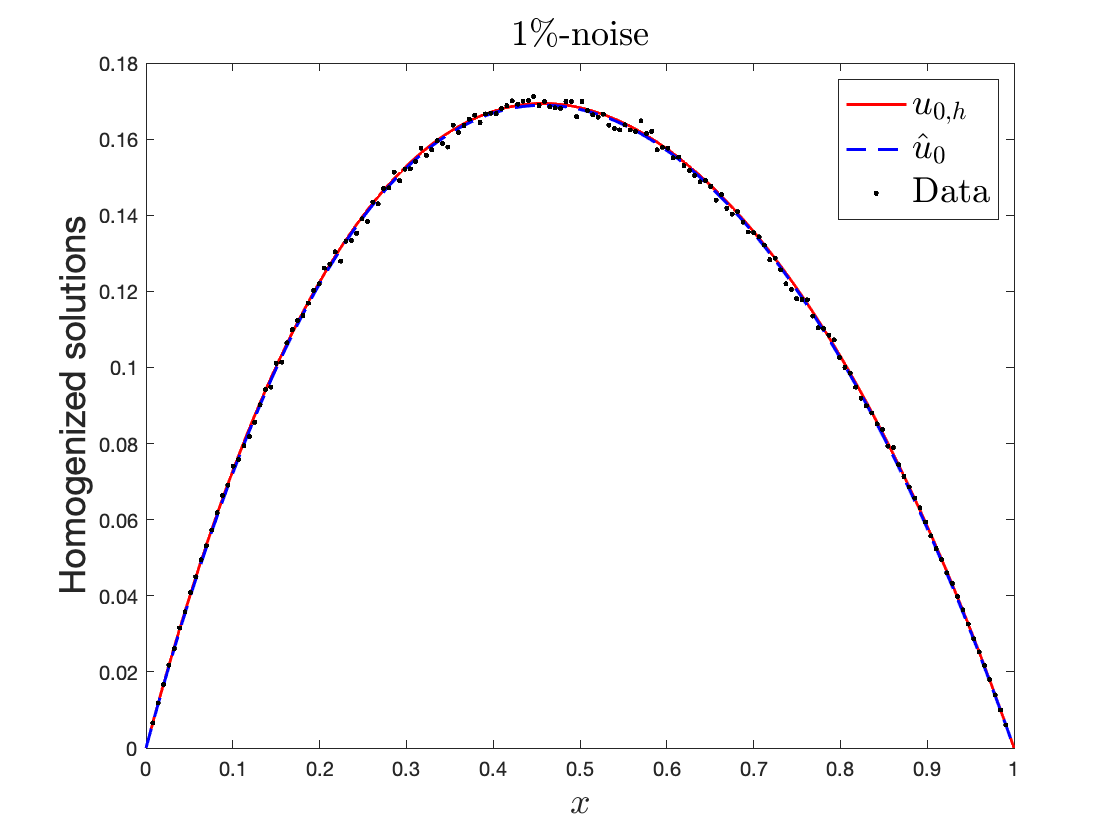}
  \caption{Solutions ($1\%$ noise)}%with $h=1/100$,\\ $\kappa^*_{FEM} =0.500050879$
  \label{fig:1dnonper_plot_data_homsols_ums1_ns1}
 \end{subfigure}
\hspace{-.25in}
  \begin{subfigure}{0.35\textwidth}
  \includegraphics[width=\textwidth]{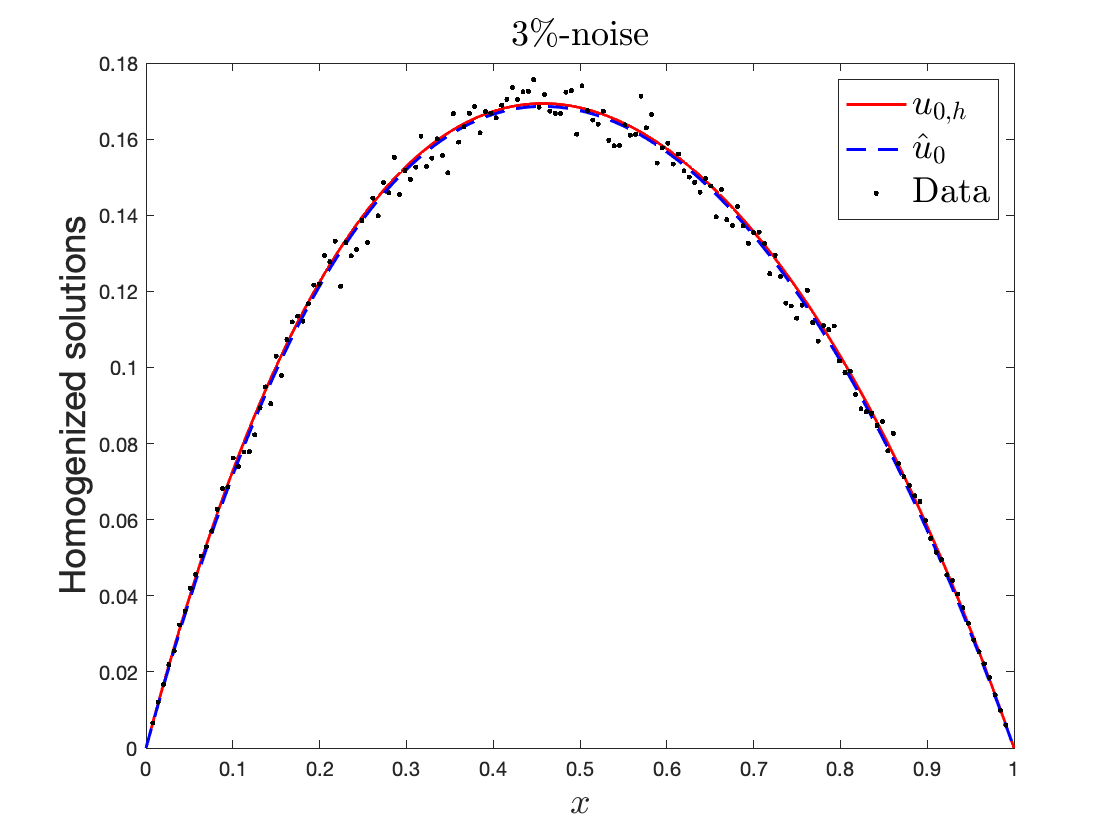}
  \caption{Solutions ($3\%$ noise)}%with $h=1/100$,\\ $\kappa^*_{FEM} =0.500050879$
  \label{fig:1dnonper_plot_data_homsols_ums1_ns3}
 \end{subfigure}
 \vspace*{-4mm}
 \caption{Problem (\ref{eq:1nonpermulti}):  comparison of  the reference solutions ($A^*(x)$ and $u_{0,h}(x)$)  and the counterparts  ($\hat{A}^*(x)$ and $\hat{u}_{0}(x)$) learned by PINNs  with different noise levels in the data. (a. b. c.):  the  G-limit; (d. e. f.):  the homogenized solution, when $\ep = 2^{-7}$ and the number of  multiscale  data and PDE residual points are $|{\mathcal T}_d| = 160$, $|{\mathcal T}_r| = 190$.}
 %(a. b. c.): The exact G-limit $A^*(x)$ and the G-limit via PINNs $\hat{A}^*(x)$ with different level of noise; (d. e. f.): Homogenized solution via FEM $u_{0,h}(x)$ and PINNs $\hat{u}_{0}(x)$ with different noise level for $\ep = 2^{-7}$ and the number of data and PDE residual points are $|{\mathcal T}_d| = 160$, $|{\mathcal T}_r| = 190$.} 
\label{figure:1dnonper_plots_ns3}
\vspace*{-4mm}
\end{figure}

\begin{figure}[!htb]
	\centering
	%  \captionsetup{justification=centering}
	\begin{subfigure}{0.40\textwidth}
  \includegraphics[width=\textwidth]{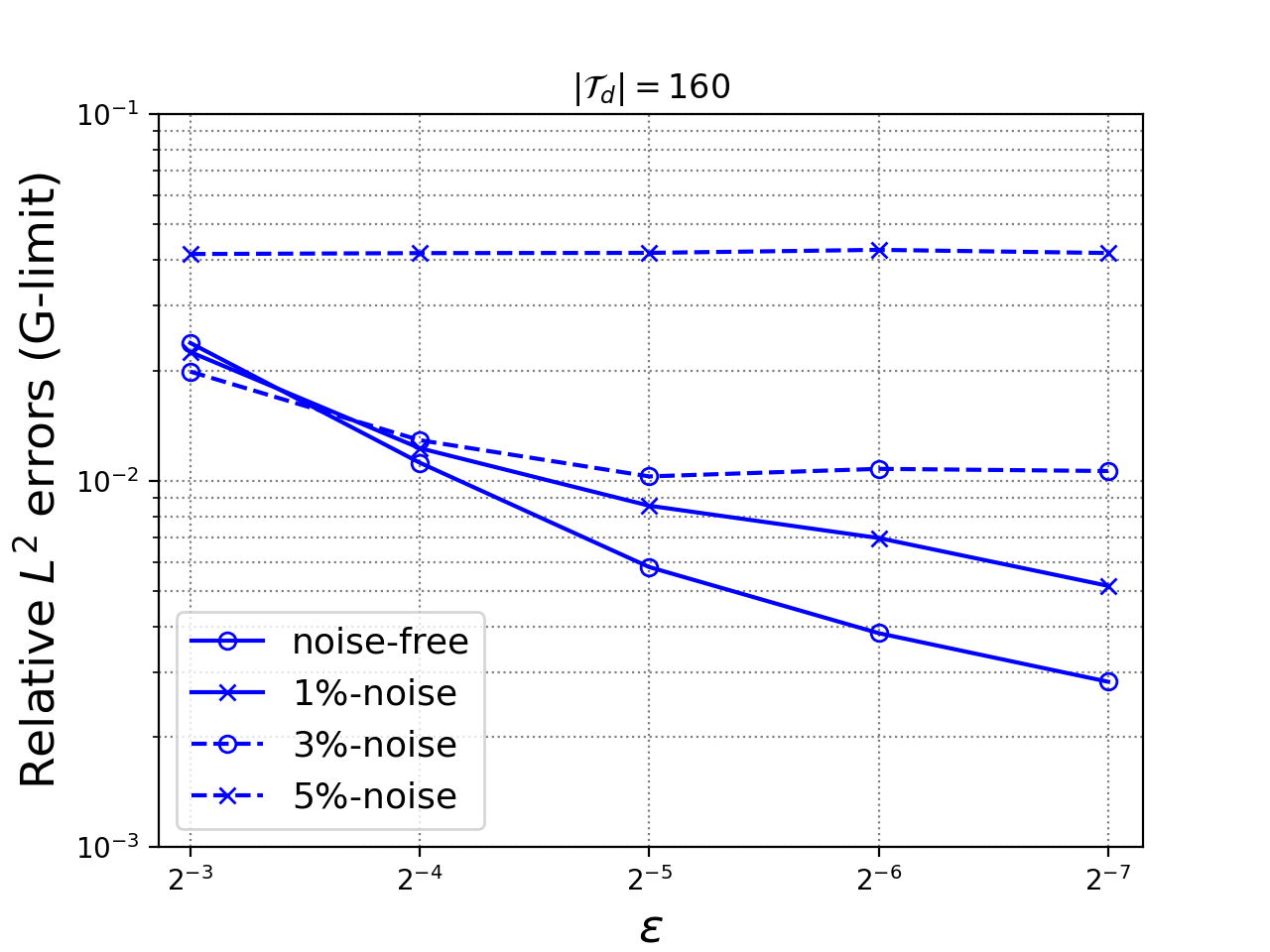}
  \caption{G-limits}%with width = 32, depth = 2 $\kappa_{nn} =0.500019
  \label{fig:1dnonper_errors_glimit_ep}
\end{subfigure}
\hspace{.3in}
  \begin{subfigure}{0.40\textwidth}
  \includegraphics[width=\textwidth]{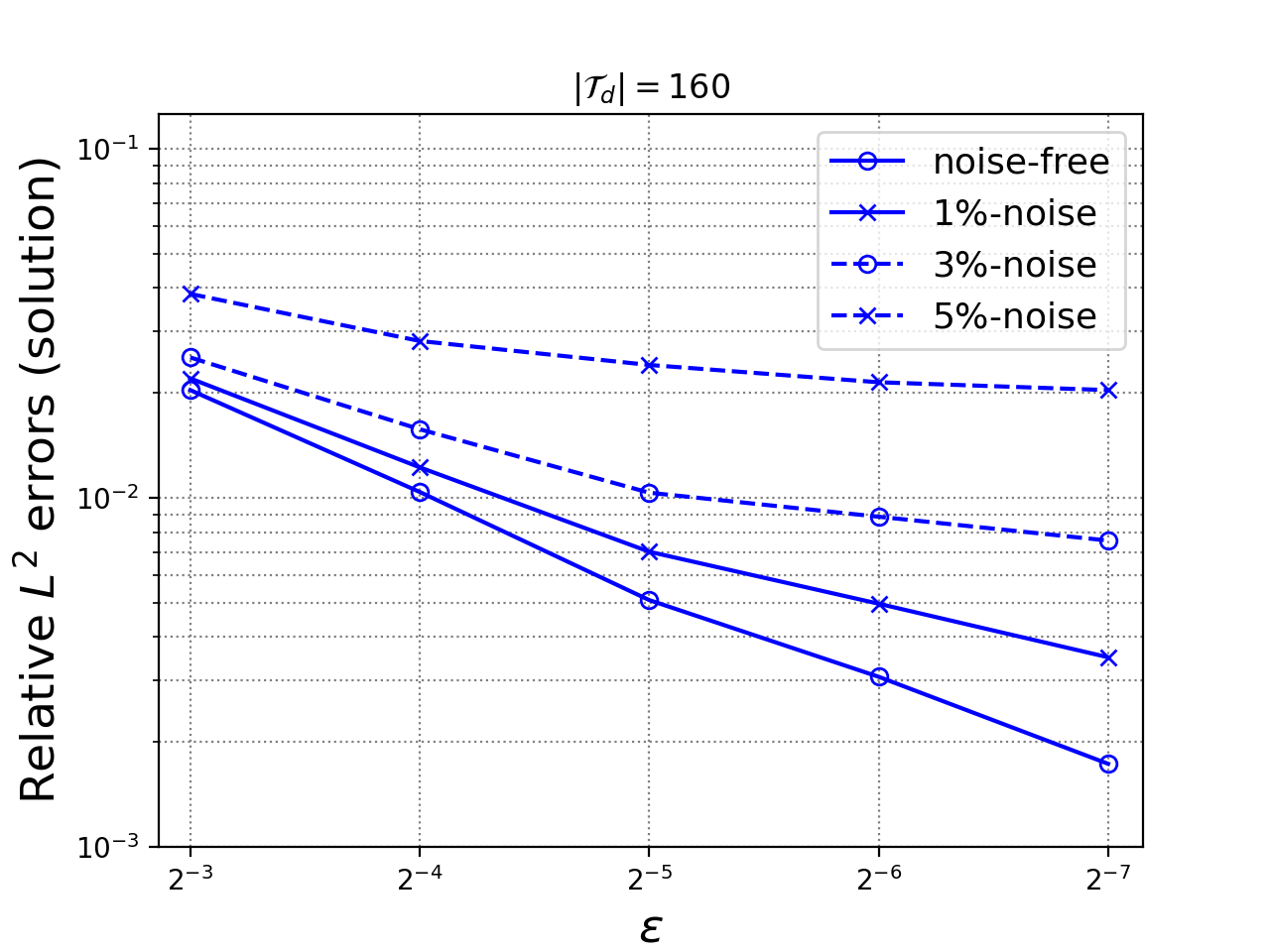}
  \caption{Homogenized solutions}%with $h=1/100$,\\ $\kappa^*_{FEM} =0.500050879$
  \label{fig:1dnonper_errors_homsol_ep}
 \end{subfigure}
  \vspace*{-4mm}
 \caption{Error results for problem (\ref{eq:1nonpermulti}): the relative $L^2$ errors for the G-limits  and the homogenized solutions  with different finescale parameter $\ep$ and noise levels of data, when the number of  multiscale  data and PDE residual points are  $|{\mathcal T}_d| = 160$ and $|{\mathcal T}_r| = 190$.}
\label{figure:glimitsolepns2}
  \vspace*{-5mm}
\end{figure}

  \begin{figure}[!htb]
 	\centering
 	  \begin{subfigure}{0.35\textwidth}
  \includegraphics[width=\textwidth]{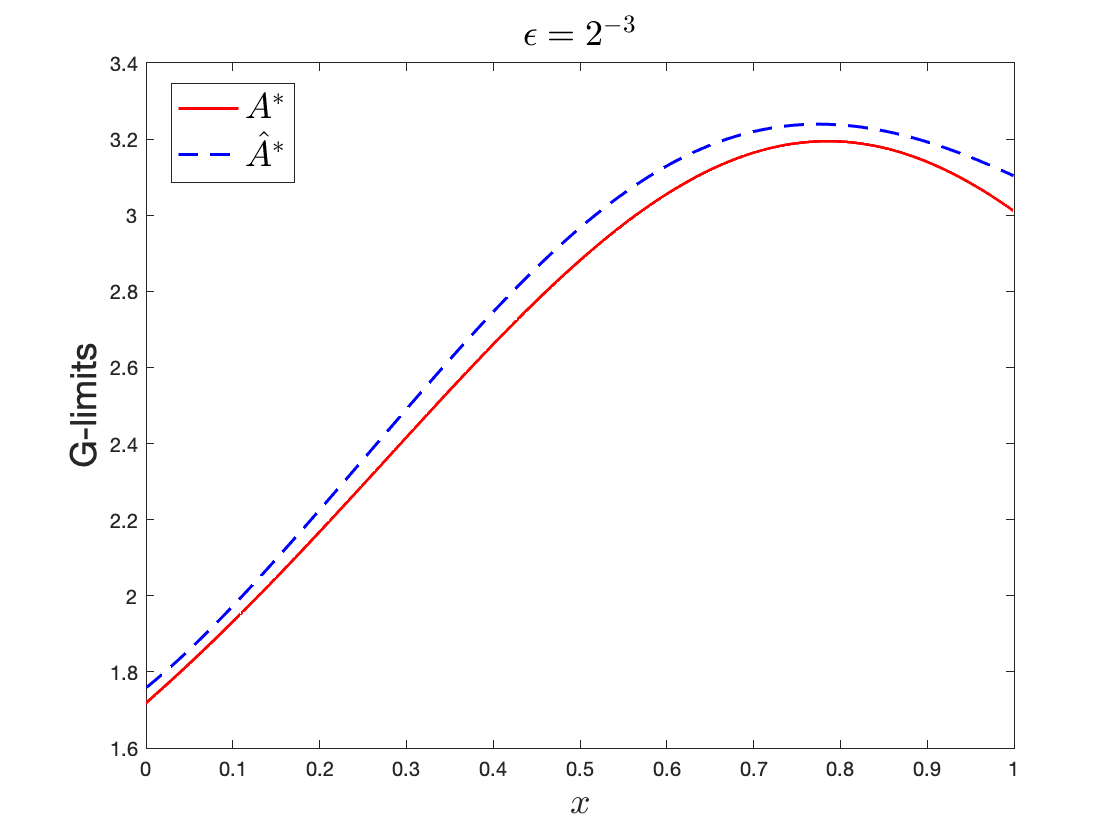}
  \caption{G-limits ($\ep = 2^{-3}$)}%with $h=1/100$,\\ $\kappa^*_{FEM} =0.500050879$
  \label{fig:1dnonper_plot_glimit_ums1_ep3}
 \end{subfigure}
\hspace{-.25in}
 %	\begin{subfigure}{0.45\textwidth}
  	  \begin{subfigure}{0.35\textwidth}
  \includegraphics[width=\textwidth]{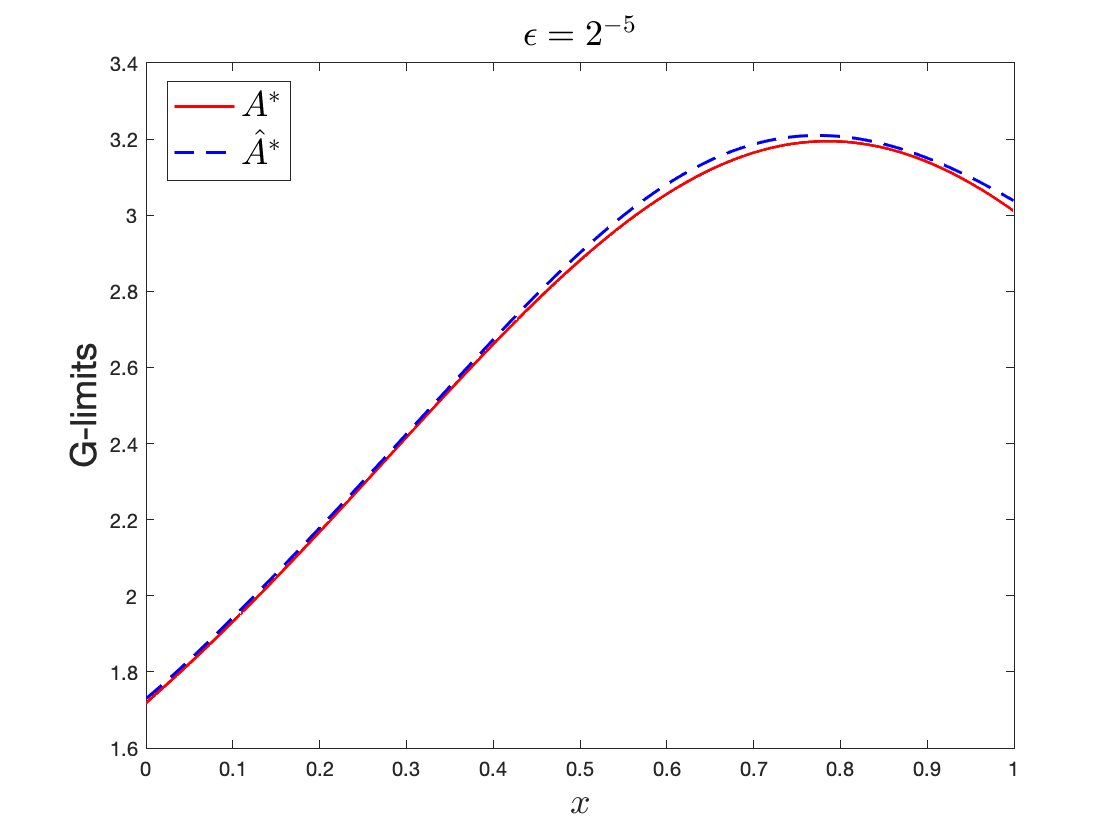}
  \caption{G-limits ($\ep = 2^{-5}$)}%with $h=1/100$,\\ $\kappa^*_{FEM} =0.500050879$
  \label{fig:1dnonper_plot_glimit_ums1_ep5}
 \end{subfigure}
\hspace{-.25in}
   \begin{subfigure}{0.35\textwidth}
  \includegraphics[width=\textwidth]{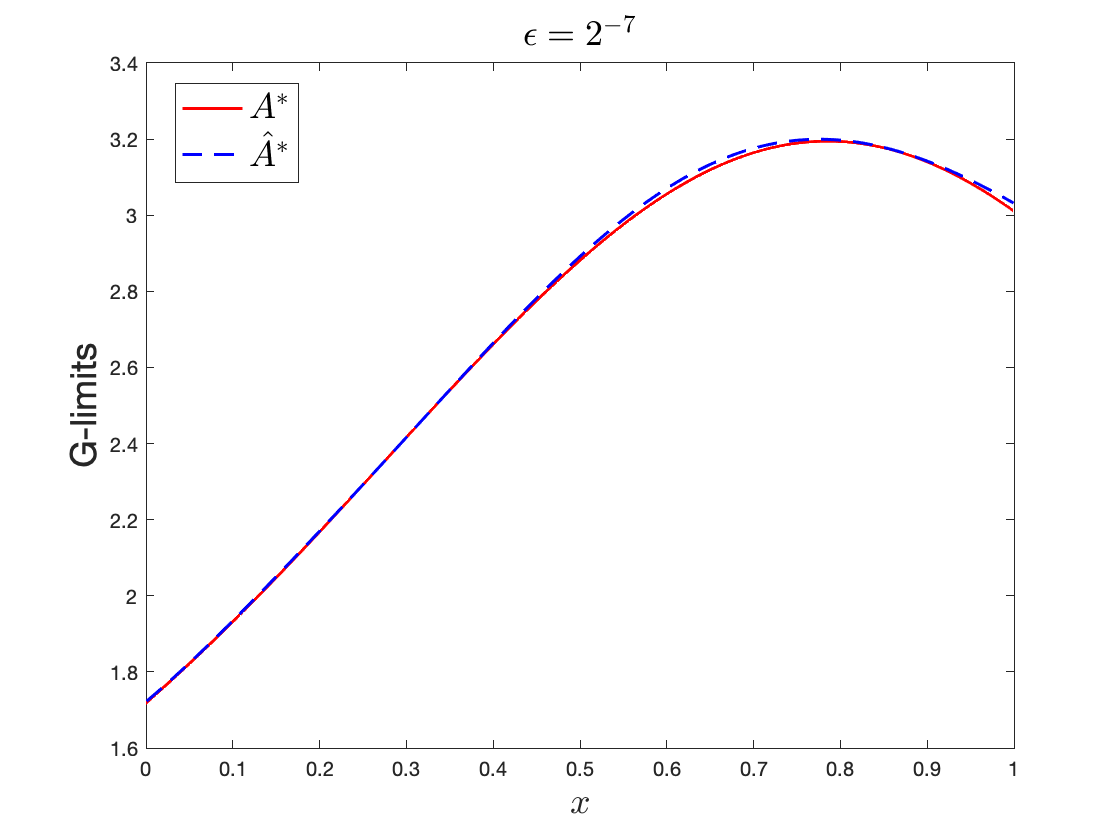}
  \caption{G-limits ($\ep = 2^{-7}$)}%with width = 32, depth = 2 $\kappa_{nn} =0.500019
  \label{fig:1dnonper_plot_kappa_ums1}
\end{subfigure}
\hspace{-.25in}
   	  \begin{subfigure}{0.35\textwidth}
  \includegraphics[width=\textwidth]{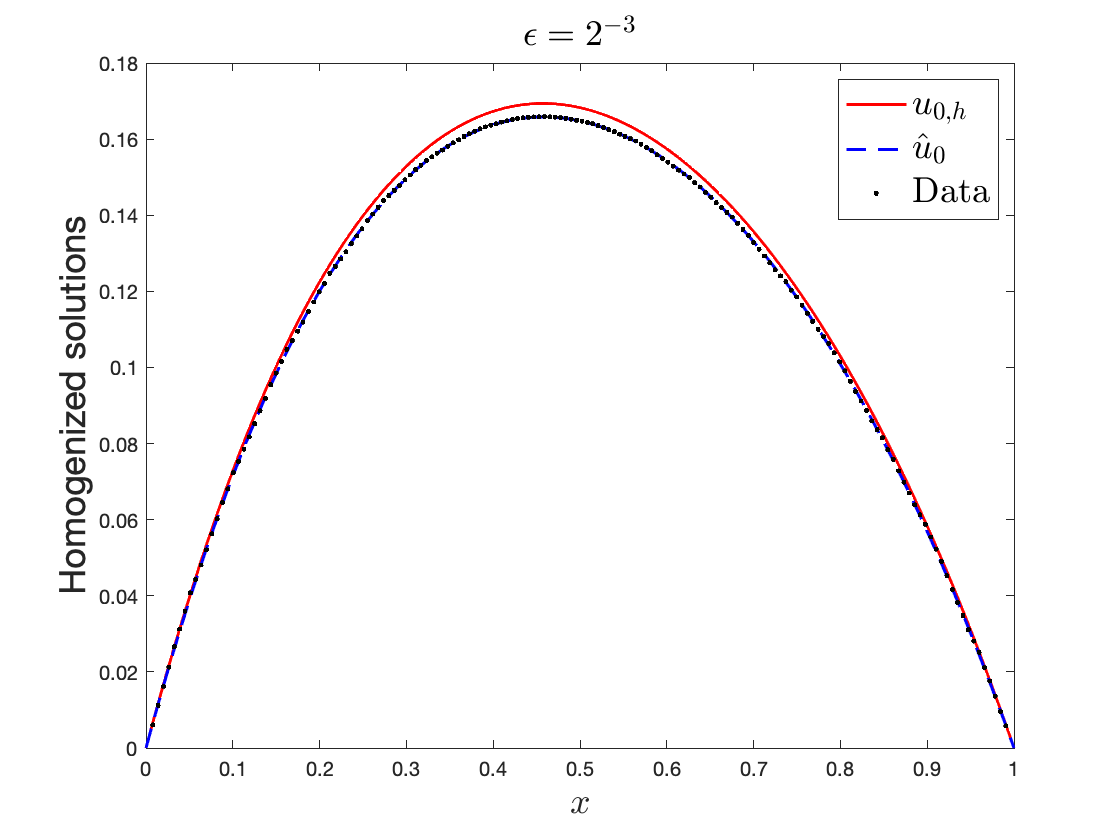}
   \caption{Solutions ($\ep = 2^{-3}$)}
   \label{fig:1dnonper_plot_data_homsols_ums1_ep3}
 \end{subfigure}
\hspace{-.25in}
 %	\begin{subfigure}{0.45\textwidth}
  	  \begin{subfigure}{0.35\textwidth}
  \includegraphics[width=\textwidth]{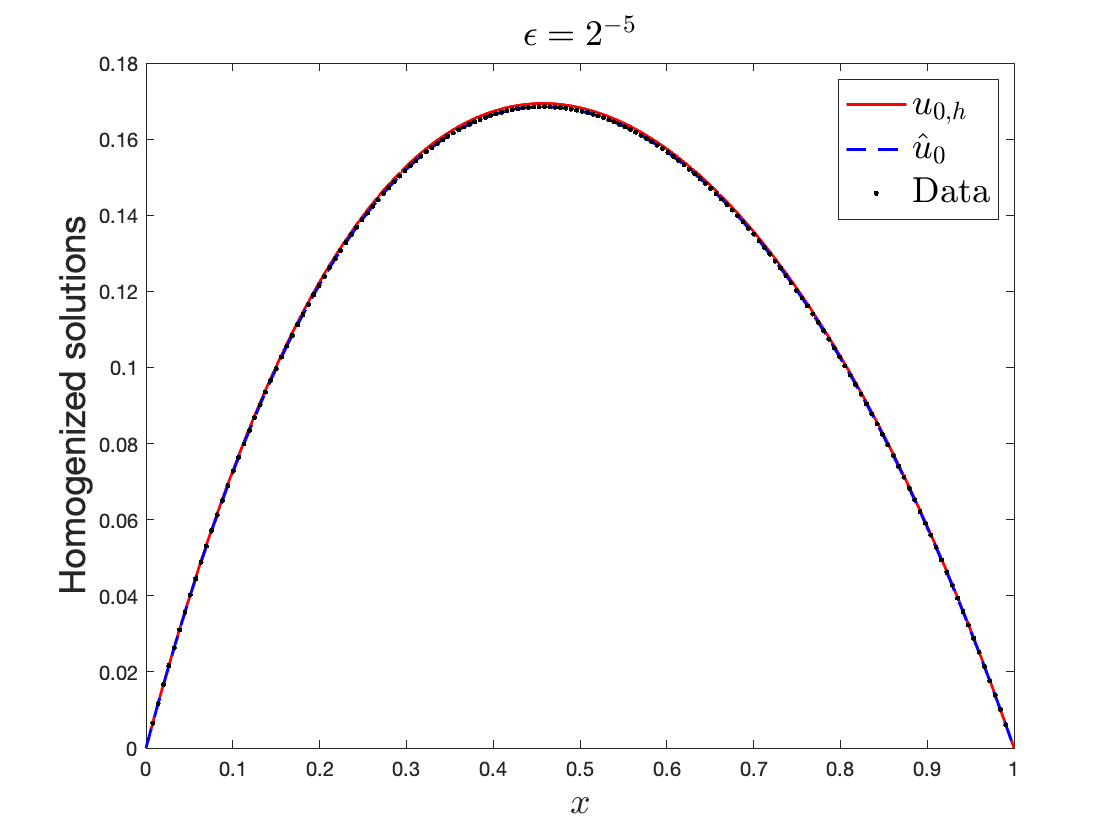}
   \caption{Solutions ($\ep = 2^{-5}$)}
   \label{fig:1dnonper_plot_data_homsols_ums1_ep5}
%    \hspace{.3in}
 \end{subfigure}
\hspace{-.25in}
  \begin{subfigure}{0.35\textwidth}
  \includegraphics[width=\textwidth]{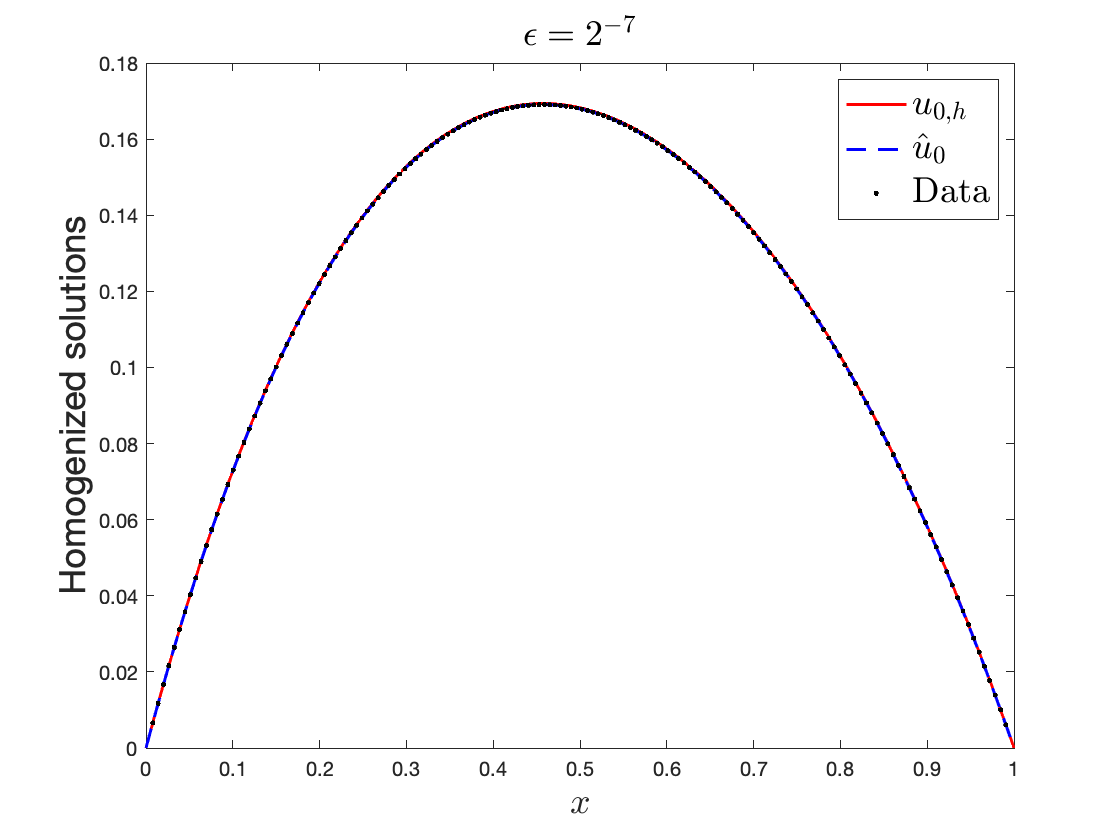}
  \caption{Solutions ($\ep = 2^{-7}$)}%with $h=1/100$,\\ $\kappa^*_{FEM} =0.500050879$
  \label{fig:1dnonper_plot_data_homsols_ums1}
 \end{subfigure}
   \vspace*{-3mm}
  \caption{Problem (\ref{eq:1nonpermulti})  with noise-free data:  comparison of  the reference solutions ($A^*(x)$ and $u_{0,h}(x)$)  and the counterparts  ($\hat{A}^*(x)$ and $\hat{u}_{0}(x)$) learned by PINNs with different values of $\ep$. (a. b. c.):  the  G-limit; (d. e. f.):  the homogenized solution, when the number of  multiscale  data and PDE residual points are $|{\mathcal T}_d| = 160$, $|{\mathcal T}_r| = 190$.}
  %with noise-free data: (a. b. c.):The exact G-limit $A^*(x)$ and the G-limit via PINNs $\hat{A}^*(x)$ with different values of $\ep$;
 % (d. e. f.): Homogenized solution via FEM $u_{0,h}(x)$ and PINNs $\hat{u}_{0}(x)$, and the noise-free data with different values of $\ep$; The number of data and PDE residual points are $|{\mathcal T}_d| = 160$, $|{\mathcal T}_r| = 190$.}%with width = 32, depth = 2
  \label{figure:1dnonper_plot_ums1_ep}
  \end{figure}
Figure \ref{figure:glimitsolepns2} presents  errors of  the estimated G-limit and the homogenized solution with respect to the finescale parameter $\ep$ and $160$ multiscale solution data collected at fixed spatial locations for all $\epsilon$, i.e., $|{\mathcal T}_d|=160$. As expected, better approximation of the G-limit and the homogenized solution can be delivered as $\ep$ becomes smaller in noise-free case.  %even when the effect of noise is dominant compared to that of $\ep$ in our data.} 
%The learnt G-limits and the homogenized solutions  in 
In addition, Figure \ref{figure:1dnonper_plot_ums1_ep} % with  different finescale parameter $\ep$ 
%It can be seen
again shows that 
the learned homogenized solutions tend to fit the multiscale solution data. 
We note that even if the finescale oscillations in our data are not visible in the figures, a relatively large  $\ep$  (=$2^{-3}$, $2^{-5}$) could result in the non-negligible difference between the reference homogenized solution and our data. As a result,  approximations of G-limits and homogenized solutions are less accurate but satisfactory for larger $\epsilon$. In contrast to noise-free scenarios, the finescale size $\ep$ has much less impact on  both learned G-limit and the homogenized solution  as the noise level increases as shown in Figure \ref{figure:glimitsolepns2}. 
\subsection{Homogenization of a 2D non-periodic coefficient}
We next consider the following 2D multiscale elliptic equation with a non-periodic coefficient introduced in \cite{persson2012selected}:
\beq
\label{eq:original_nonper_2D}
\begin{split}
-\div \bigg( A^\ep(x) \cdot \nabla u^\ep(x) \bigg) &= 1 \ \ \textrm{in} \ \ \Omega = [1,2]^2\\
u^\ep(x) &= 0 \ \ \textrm{on} \ \ \partial\Omega,
\end{split}
\eeq
where $A^\ep(x) = \left(1+0.9\sin(2\pi\frac{x_1}{\ep})\sin(2\pi \frac{x_2^2}{\ep})\right)$. Figure \ref{figure:Perm_2Dnonper} illustrates the multiscale coefficient $A^\ep(x)$ when $\ep = 2^{-3}$. % and the domain is given by $\Omega = [1,2]^2$.
The G-limit $A^*= \big(\begin{smallmatrix} A_{11}(x) & A_{12}(x)\\ A_{21}(x) & A_{22}(x) \end{smallmatrix}\big)$ that is a $2\times2$ matrix function, can be found via the $\lambda$-scale convergence technique \cite{persson2012selected}. 
Since $A^\ep(x)$ is periodic with respect to $x_1$, we know that the G-limit only depends on $x_2$, i.e., $A^*(x) = A^*(x_2)$. 
We assume a priori that the non-diagonal entries of the G-limit are zeros
i.e., $A_{12}(x_2) = A_{21}(x_2) = 0$.
%We construct a network with two outputs $\hat{A}_{11}(x_2)$, $\hat{A}_{22}(x_2)$ to
Therefore, we shall only approximate the diagonal entries of the G-limit.
The reference G-limit is computed by the $\lambda$-scale convergence method. %, which often requires solving a large number of local cell problems. 
Specifically, we solved local cell problems at $129$  equidistant points of $x_2$ and each problem is solved by FEM with a mesh size of $h = 1/1000$. With the reference G-limit, we computed the reference homogenized solution  by FEM with a mesh size $h=1/128$.

\begin{figure}[!t]
	\centering
%	\begin{subfigure}{0.45\textwidth}
  \includegraphics[scale =0.15]{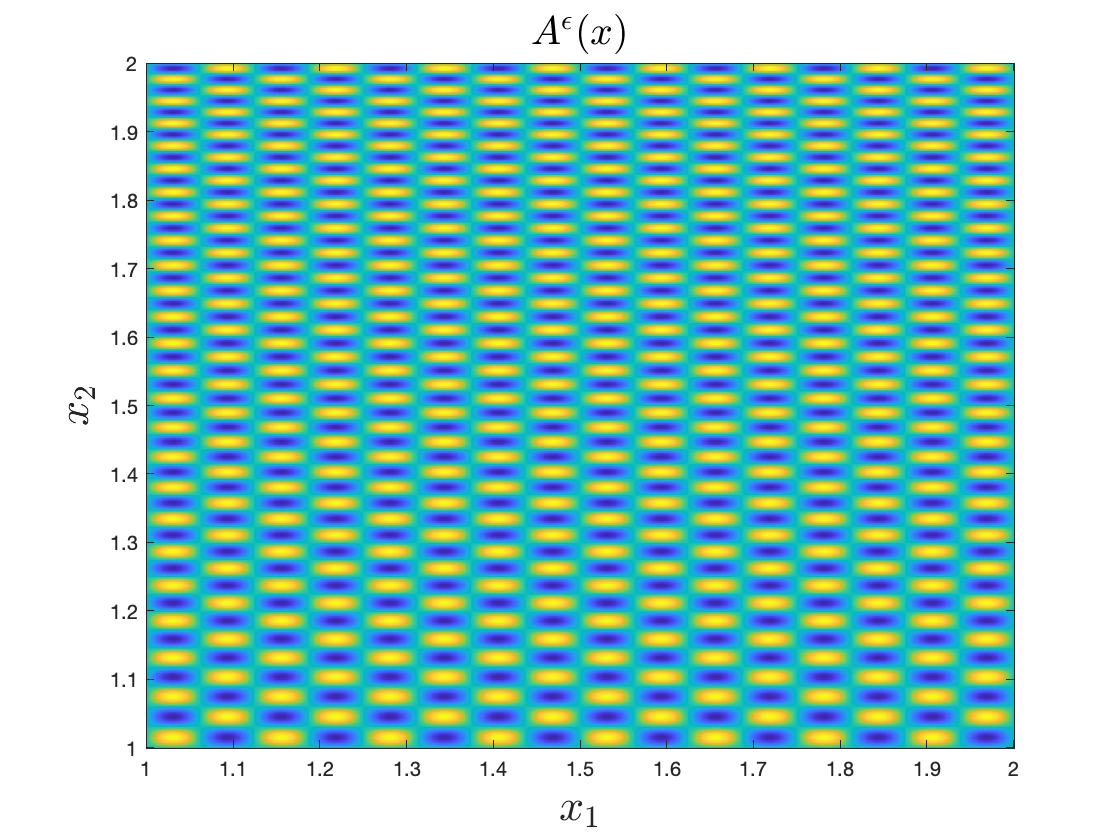}
  \caption{The multiscale permeability coefficient $A^\ep(x)$ in (\ref{eq:original_nonper_2D}) when $\ep = 2^{-3}$.}
  \label{figure:Perm_2Dnonper}
 \end{figure}

The training data are equally spaced sampled from the multiscale solution for each finescale parameter value $\epsilon$ obtained by the forward FEM simulation of the problem (\ref{eq:original_nonper_2D}) with a fine mesh size $1/8000$. The architecture parameters and other hyperparameters of PINNs are listed in {\it Table \ref{tb:hyperparameters}} in appendix. %$each mesh size $h=1/8000$. 
We compute the errors on a mesh with size $h=1/128$ in the spatial domain.

Figure \ref{figure:glimitsoltdns3_sc} presents  the error convergence of the G-limit and the homogenized solutions for $\ep = 2^{-7}$ with different numbers of training data.  
With noise-free data,  $400$ data points appear to be sufficient to obtain good approximations with errors of less than $10^{-3}$ for both G-limit and homogenized solution. Increasing the amount of training data helps improve the accuracy for noisy data cases.
Even with $5\%$-noise in the data, our proposed method can still achieve an error less than ${\mathcal O}(10^{-2})$ for both coefficient and the solution, when the number of available data is large enough. % we involve $6400$ data.
We also plot the G-limits and the homogenized solutions at $x_2 = 1.25$ shown in 
Figure \ref{figure:2dnonper_plots_ns} %present the plots G-limits and the solution
when $\ep = 2^{-7}$. Both diagonal entries of the G-limit and homogenized solution agree well with their references. %Figure  show the homogenized solution  for $x_1 = 1.25$. 
Again, we observe that the learned solutions tend to fit the macroscale behaviors of the data, even when non-negligible noises present in \ref{figure:2dnonper_plot_data_homsols_ums1_ns1_sc} and \ref{figure:2dnonper_plot_data_homsols_ums1_ns3}.

\begin{figure}[!htb]
	\centering
	  \vspace*{3mm}
	%  \captionsetup{justification=centering}
	\begin{subfigure}{0.40\textwidth}
  \includegraphics[width=\textwidth]{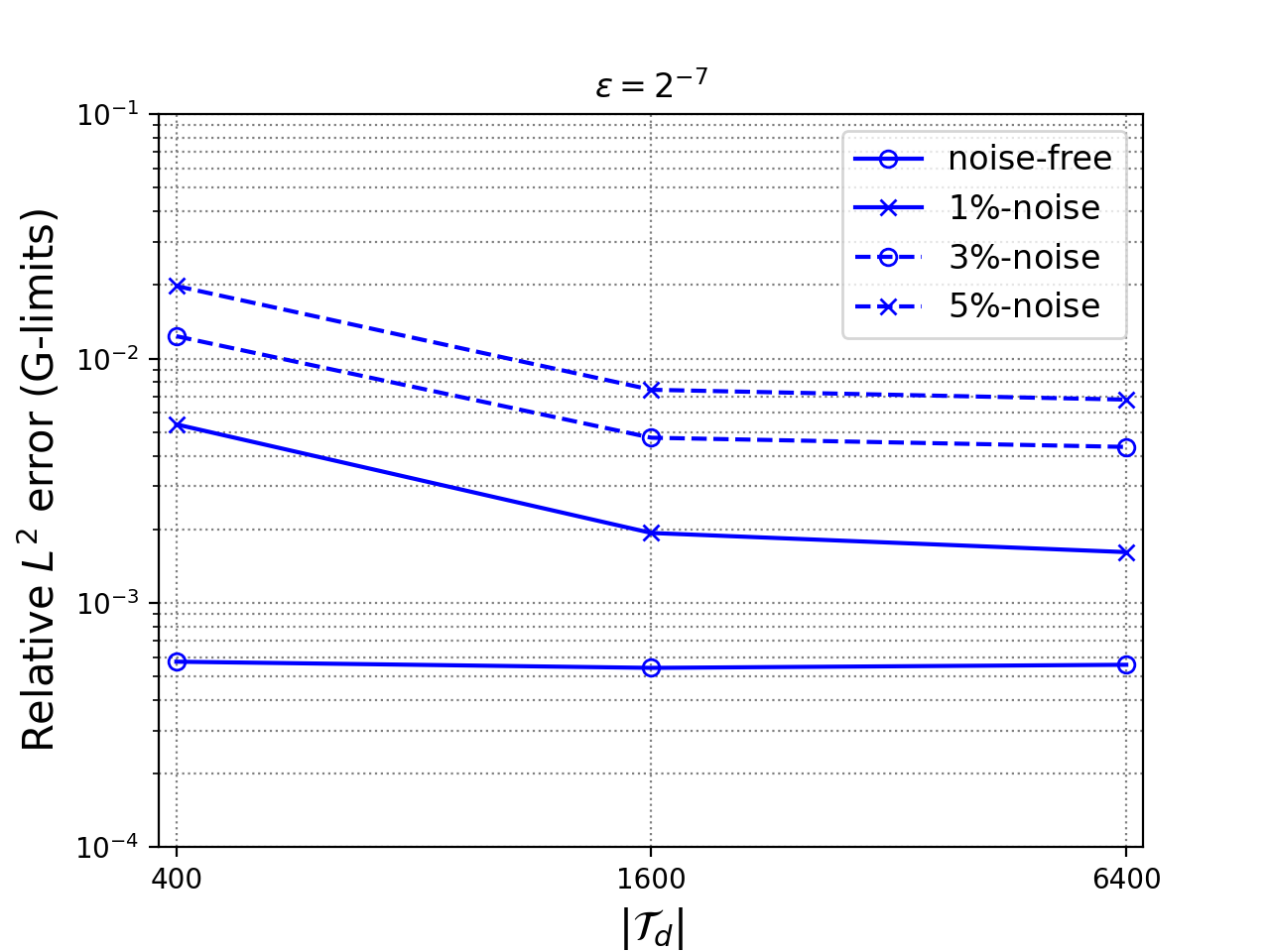}
  \caption{G-limits}%with width = 32, depth = 2 $\kappa_{nn} =0.500019
  \label{fig:2dnonper_errors_glimit_nd_sc}
\end{subfigure}
\hspace{.3in}
  \begin{subfigure}{0.40\textwidth}
  \includegraphics[width=\textwidth]{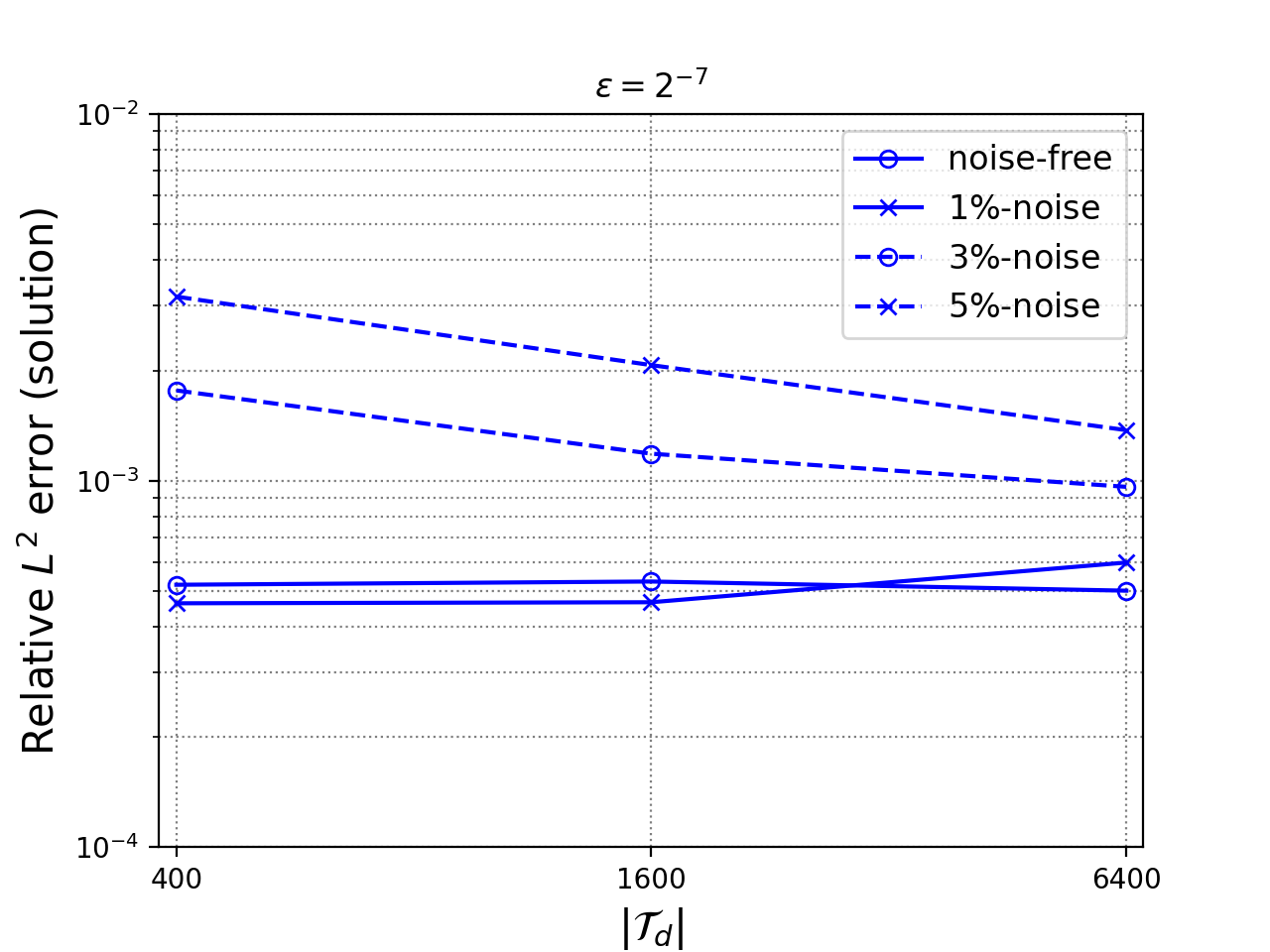}
  \caption{Homogenized solutions}%with $h=1/100$,\\ $\kappa^*_{FEM} =0.500050879$
  \label{fig:2dnonper_errors_homsol_nd_sc}
 \end{subfigure}
 \caption{ Error results for problem (\ref{eq:original_nonper_2D}): the relative $L^2$ errors for the G-limits and the homogenized solutions with different number of multiscale data corrupted by different noise levels, when $\ep = 2^{-7}$ and the number of PDE residual points is $|{\mathcal T}_r| = |{\mathcal T}_d|$.}
\label{figure:glimitsoltdns3_sc}
\end{figure}
 \begin{figure}[!htb]
	\centering
   \begin{subfigure}{0.35\textwidth}
  \includegraphics[width=1\textwidth]{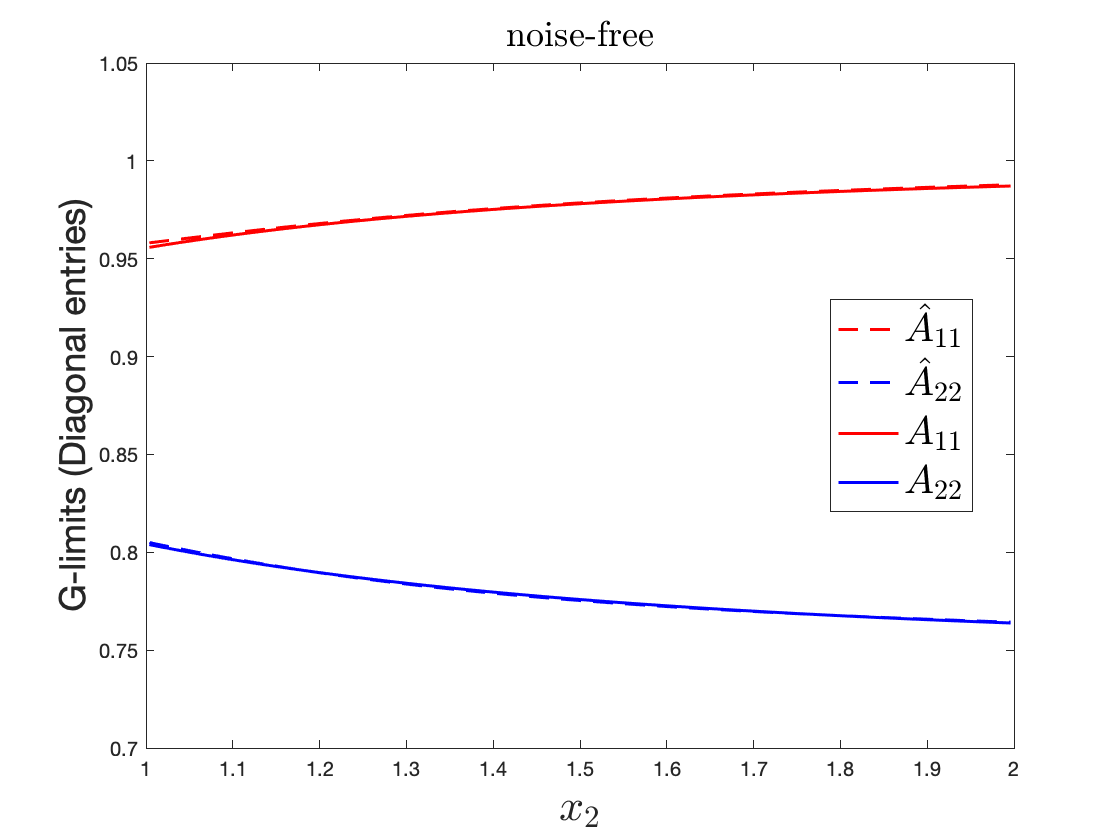}
  \caption{G-limits (noise-free)}
  \label{figure:2dnonper_plot_Glimits_nd40_ep7_mat}
  \end{subfigure}
  	   \hspace{-.25in}
\begin{subfigure}{0.35\textwidth}
  \includegraphics[width=1\textwidth]{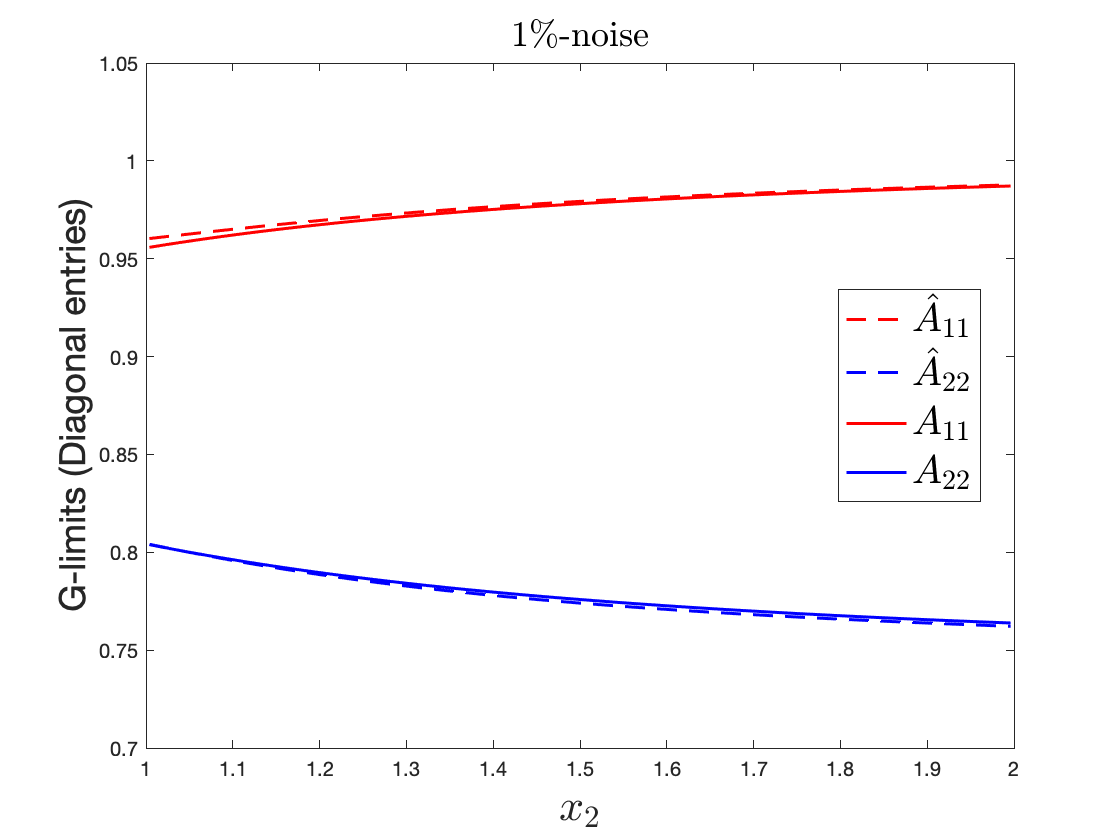}
  \caption{G-limits ($1\%$ noise)}
  \label{figure:2dnonper_plot_Glimits_nd40_ep7_mat_ns1_sc}
  \end{subfigure}
  	   \hspace{-.25in}
  \begin{subfigure}{0.35\textwidth}
  \includegraphics[width=1\textwidth]{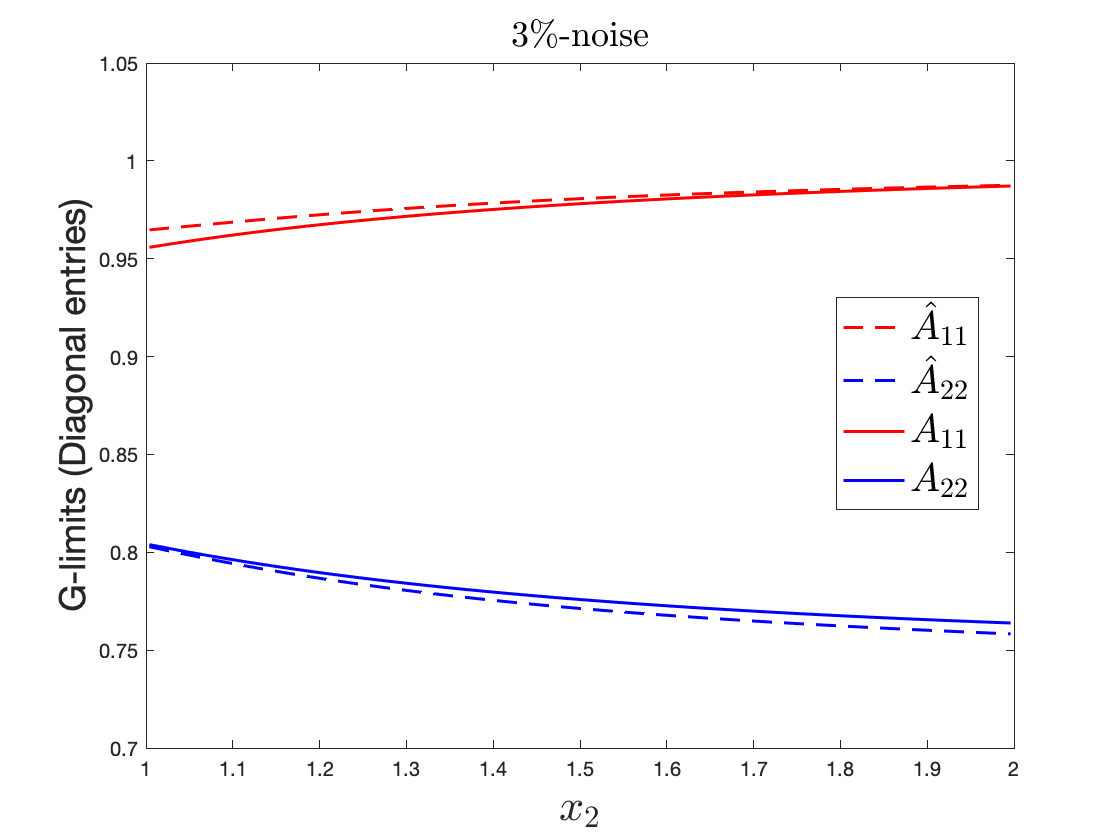}
  \caption{G-limits ($3\%$ noise)}
  \label{figure:2dnonper_plot_Glimits_nd40_ep7_mat_ns3_sc}
  \end{subfigure}
  	   \hspace{-.25in}
    \begin{subfigure}{0.35\textwidth}
  \includegraphics[width=1\textwidth]{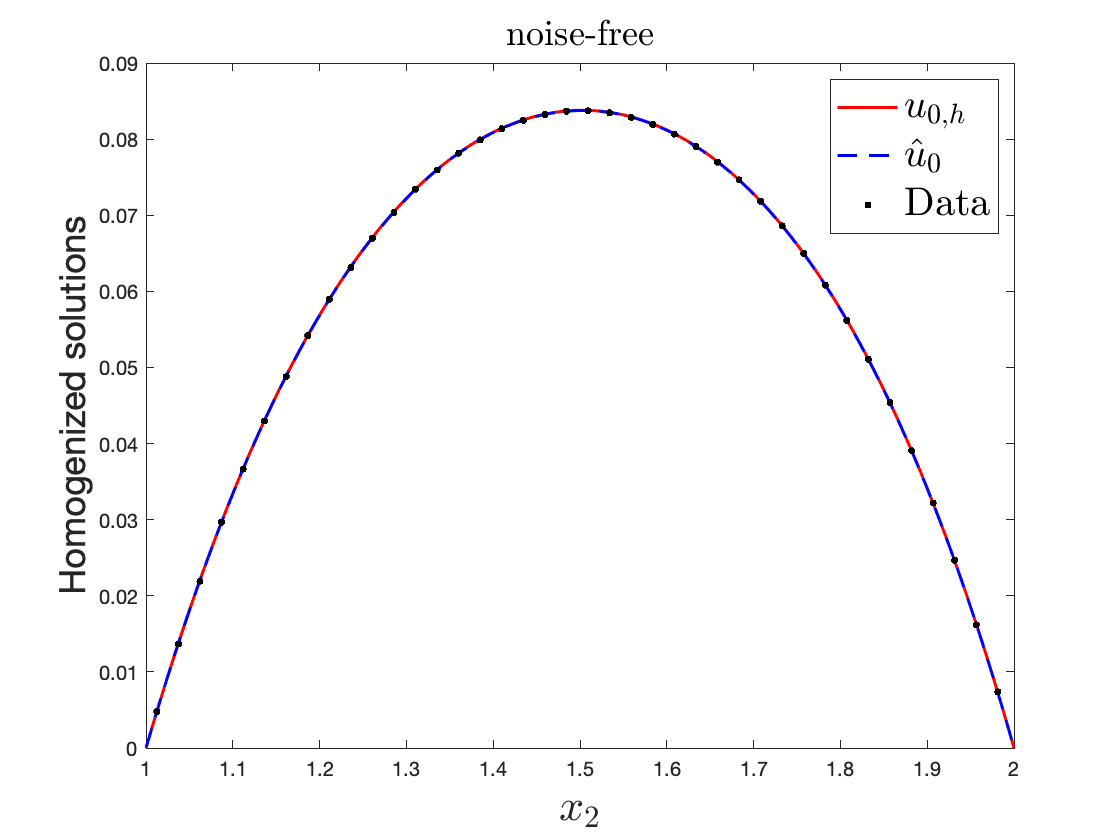}
 \caption{Solutions (noise-free)}
  \label{figure:2dnonper_plot_data_homsols_ums1_ep7}
  \end{subfigure}
  	   \hspace{-.25in}
  \begin{subfigure}{0.35\textwidth}
  \includegraphics[width=1\textwidth]{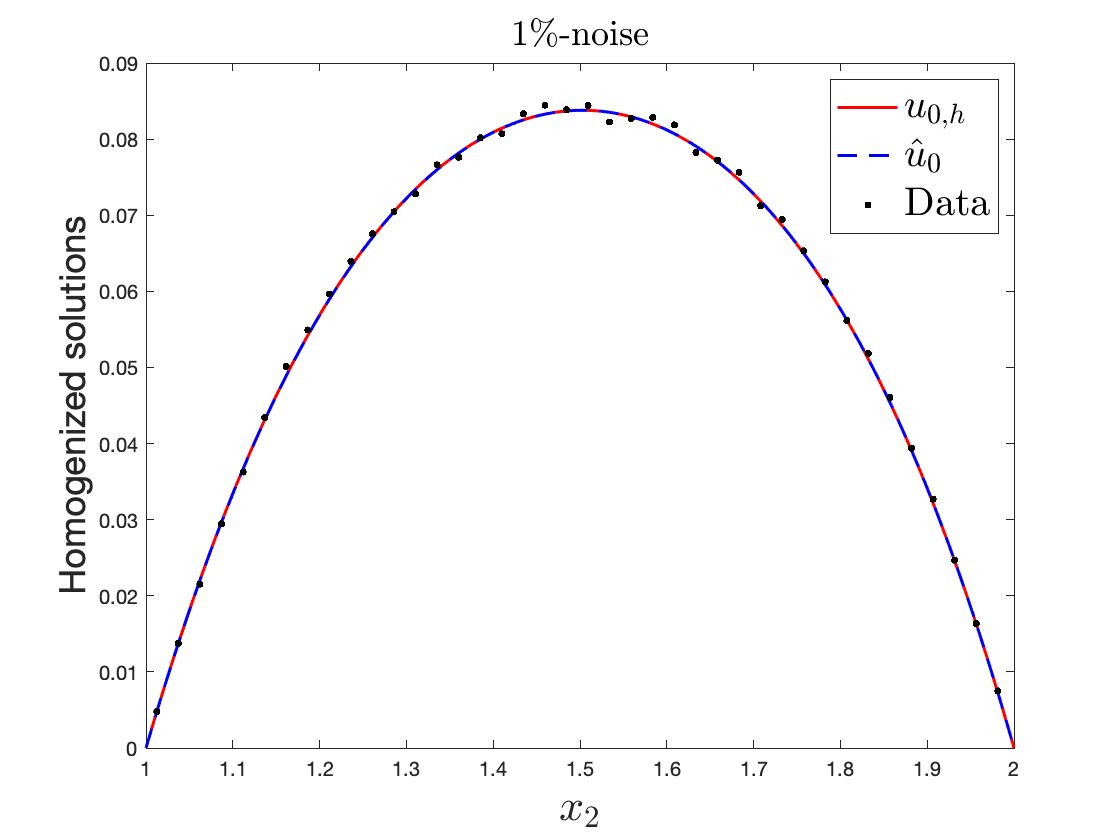}
 \caption{Solutions ($1\%$ noise)}
  \label{figure:2dnonper_plot_data_homsols_ums1_ns1_sc}
  \end{subfigure}
  	   \hspace{-.25in}
  \begin{subfigure}{0.35\textwidth}
  \includegraphics[width=1\textwidth]{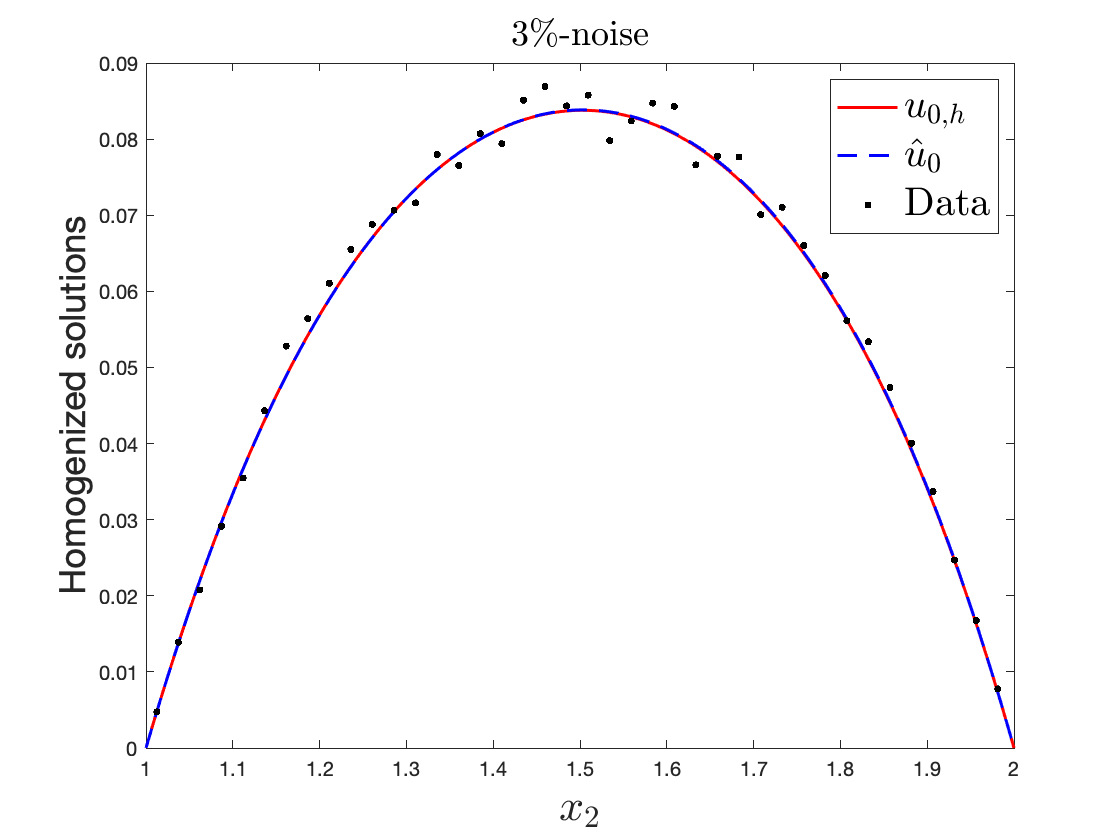}
 \caption{Solutions ($3\%$ noise)}
  \label{figure:2dnonper_plot_data_homsols_ums1_ns3}
  \end{subfigure}
    \vspace*{-4mm}
 \caption{Problem (\ref{eq:original_nonper_2D}): 
   comparison of  the reference solutions (the diagonal entries of  $A^*(x)$ and $u_{0,h}(x)$)  and the counterparts learned by PINNs with different  noise levels. (a. b. c.):  the  G-limit; (d. e. f.):  the homogenized solution at $x_1 = 1.25$, where $\epsilon = 2^{-7}$,  the number of  multiscale  data and PDE residual points are $|{\mathcal T}_d| = 1600$, $|{\mathcal T}_r| = 1600$.}
  \label{figure:2dnonper_plots_ns}
    \vspace*{-5mm}
 \end{figure}

  \begin{figure}[!htb]
	\centering
	%  \captionsetup{justification=centering}
	\begin{subfigure}{0.40\textwidth}
  \includegraphics[width=\textwidth]{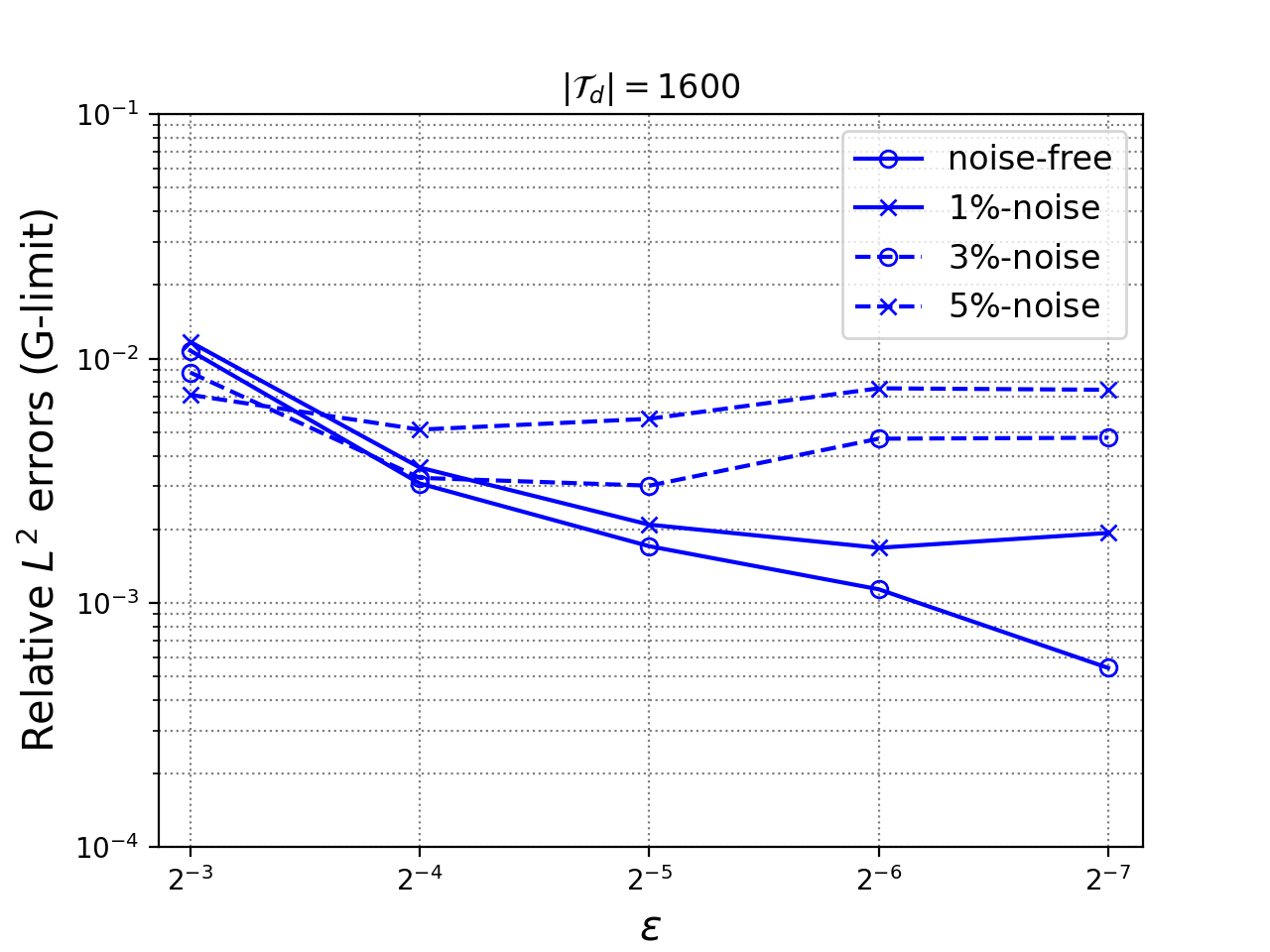}
  \caption{G-limits}%with width = 32, depth = 2 $\kappa_{nn} =0.500019
  \label{fig:2dnonper_errors_glimit_ep_sc}
\end{subfigure}
\hspace{.3in}
  \begin{subfigure}{0.40\textwidth}
  \includegraphics[width=\textwidth]{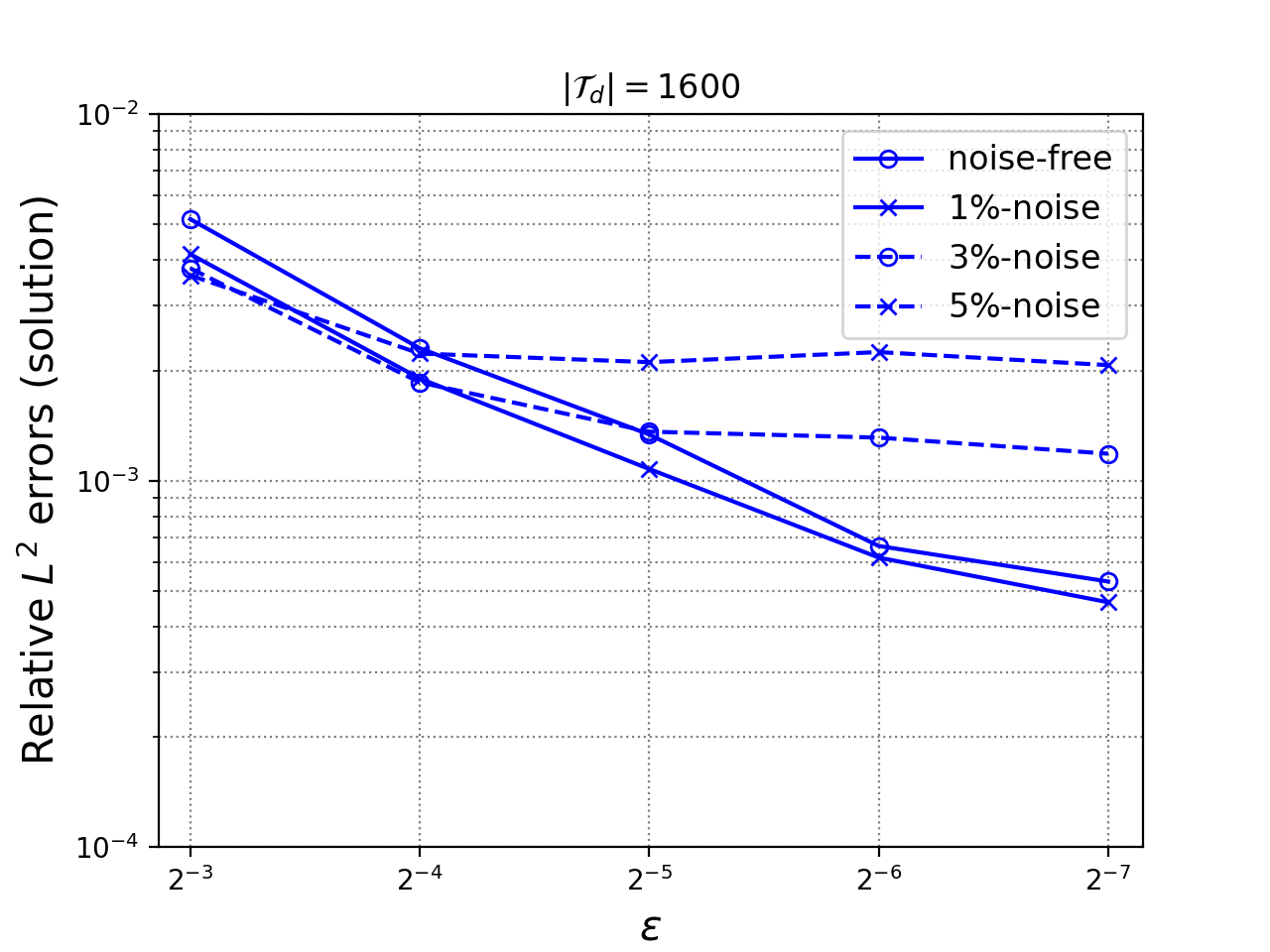}
  \caption{Homogenized solutions}%with $h=1/100$,\\ $\kappa^*_{FEM} =0.500050879$
  \label{fig:2dnonper_errors_homsol_ep_sc}
 \end{subfigure}
   \vspace*{-4mm}
 \caption{ Error results for problem (\ref{eq:original_nonper_2D}): the relative $L^2$ errors for the G-limits  and the homogenized solutions  with different finescale parameter $\ep$ and noise levels in the  data when the number of multiscale data and PDE residual points are  $|{\mathcal T}_d| = 1600$ and $|{\mathcal T}_r| = 1600$.}
\label{figure:glimitsolepns3_Sc}
  \vspace*{-7mm}
\end{figure}

  \begin{figure}[!htb]
 	\centering
 \begin{subfigure}{0.35\textwidth}
  \includegraphics[width=1\textwidth]{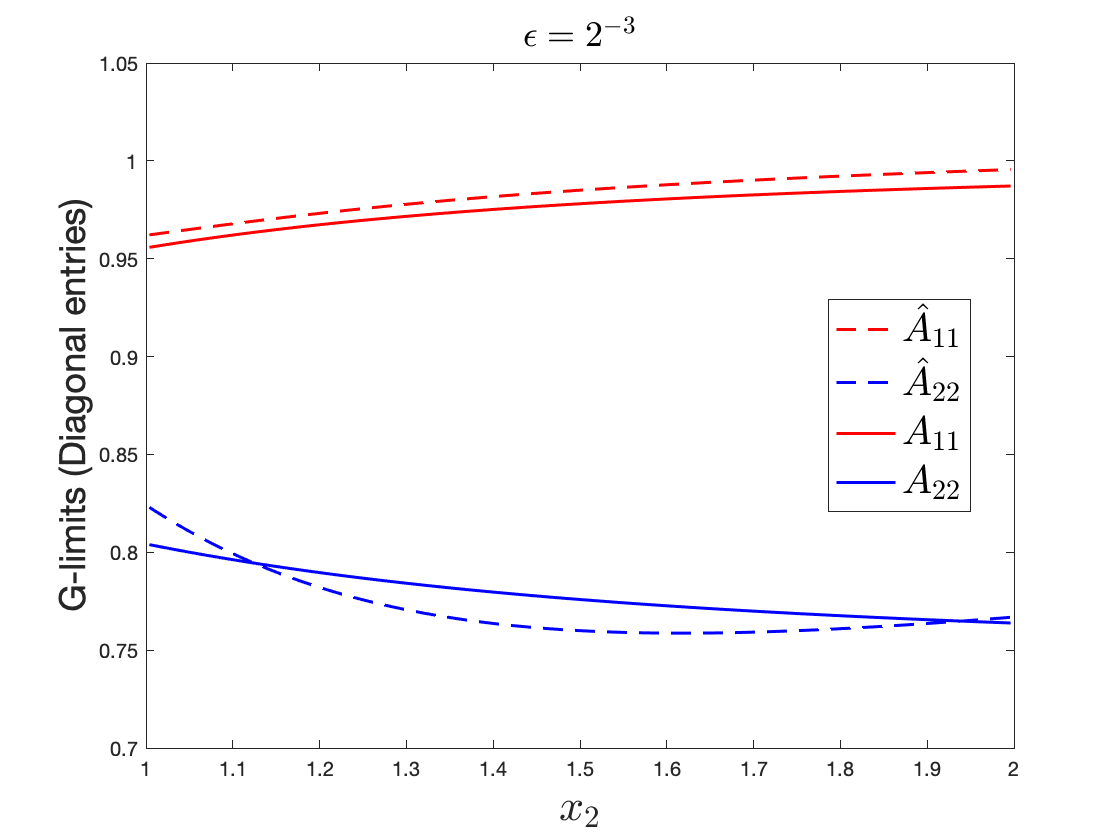}
  \caption{G-limits ($\ep = 2^{-3}$)}
  \label{figure:2dnonper_plot_Glimits_nd40_ep3_mat_sc}
  \end{subfigure}
  \hspace{-.25in}
   \begin{subfigure}{0.35\textwidth}
  \includegraphics[width=1\textwidth]{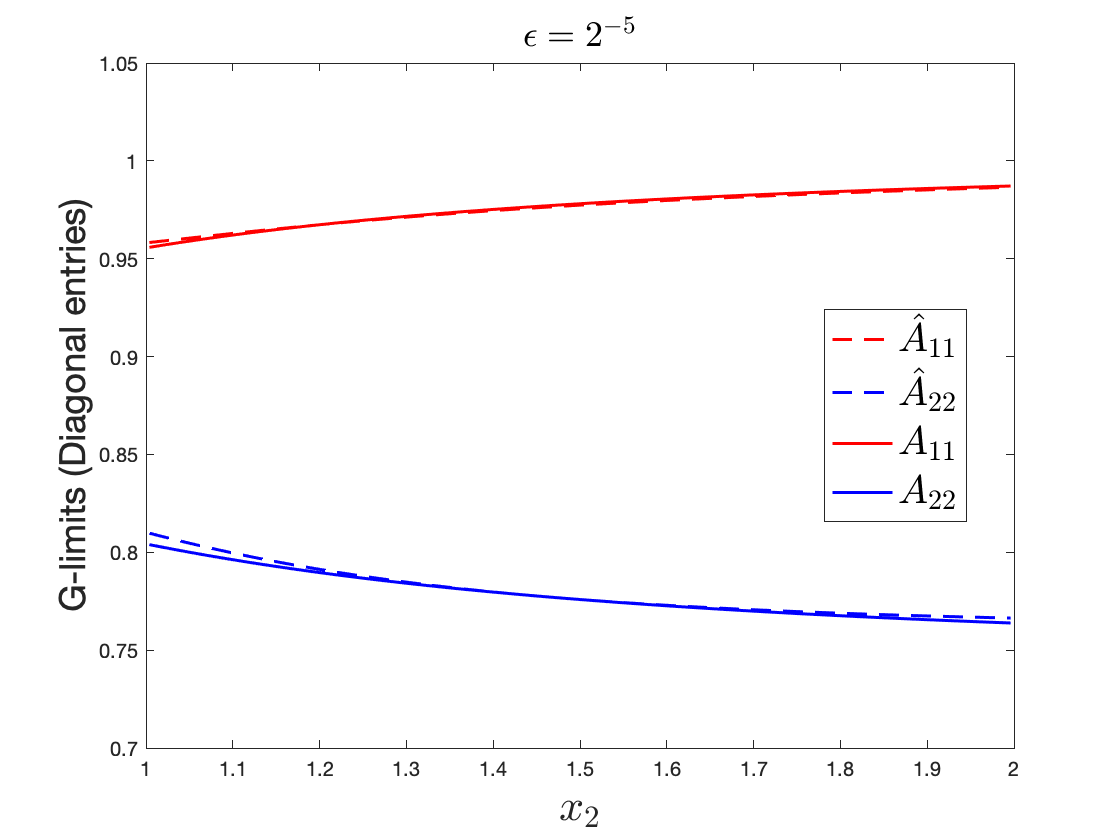}
  \caption{G-limits ($\ep = 2^{-5}$)}
  \label{figure:2dnonper_plot_Glimits_nd40_ep7_mat_ep5_sc}
  \end{subfigure}
   \hspace{-.25in}
   \begin{subfigure}{0.35\textwidth}
  \includegraphics[width=1\textwidth]{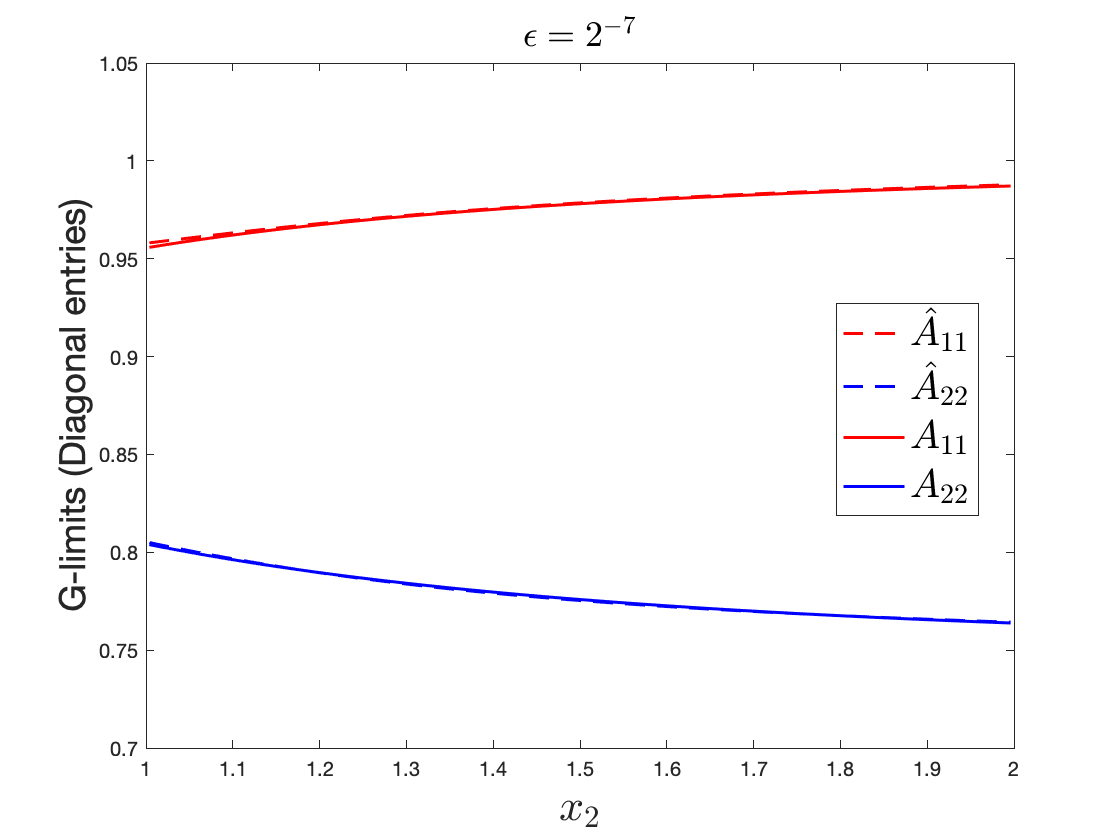}
  \caption{G-limits ($\ep = 2^{-7}$)}
  \label{figure:2dnonper_plot_Glimits_nd40_ep7_mat_sc}
  \end{subfigure}
  \begin{subfigure}{0.35\textwidth}
  \includegraphics[width=1\textwidth]{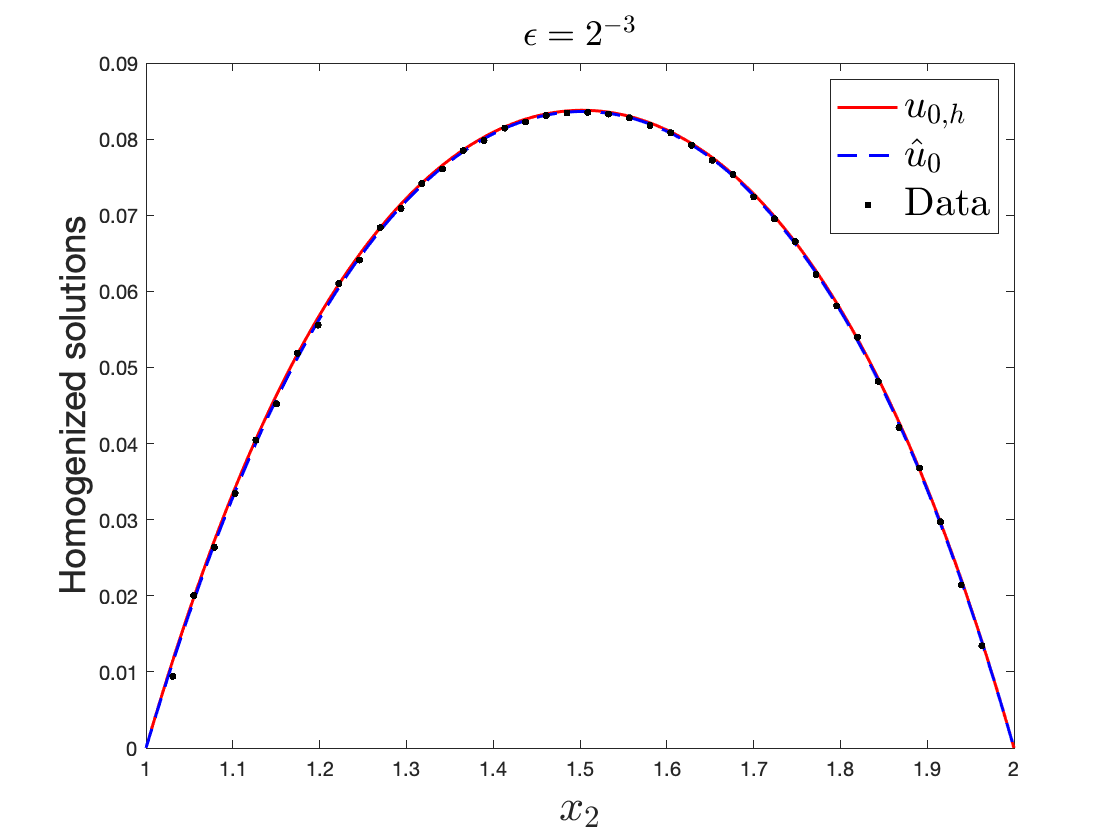}
 \caption{Solutions ($\ep = 2^{-3}$)}
  \label{figure:2dnonper_plot_data_homsols_ums1_ep3_sc}
  \end{subfigure}
     \hspace{-.25in}
  \begin{subfigure}{0.35\textwidth}
  \includegraphics[width=1\textwidth]{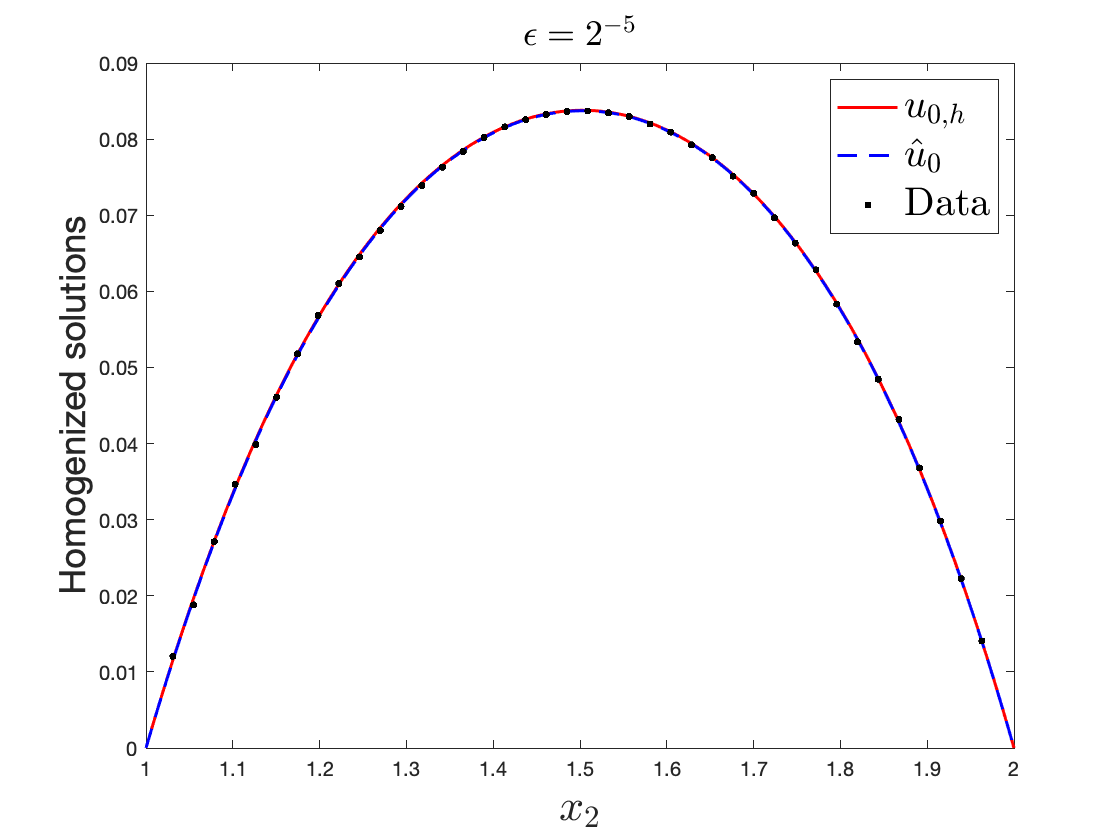}
 \caption{Solutions ($\ep = 2^{-5}$)}
  \label{figure:2dnonper_plot_data_homsols_ums1_ep5_sc}
  \end{subfigure}
       \hspace{-.25in}
  \begin{subfigure}{0.35\textwidth}
  \includegraphics[width=1\textwidth]{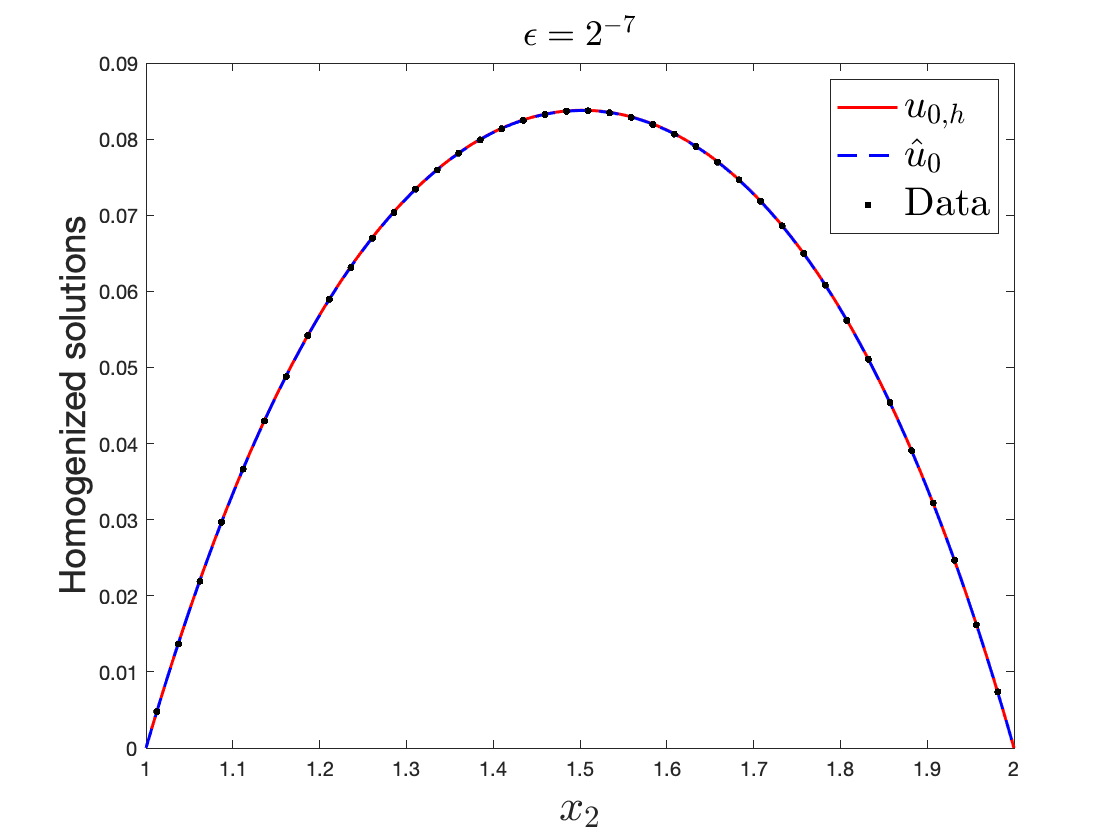}
 \caption{Solutions ($\ep = 2^{-7}$)}
  \label{figure:2dnonper_plot_data_homsols_ums1_ep7_sc}
  \end{subfigure}
  \caption{Problem (\ref{eq:original_nonper_2D}) with noise-free data:  comparison of  the reference coefficient and homogenized solutions (the diagonal entries of  $A^*(x)$ and $u_{0,h}(x)$)  and counterparts learned by PINNs with different values of $\ep$. (a. b. c.):  the  G-limit; (d. e. f.):  the homogenized solution at $x_1 = 1.25$, where the number of  multiscale  data and PDE residual points are $|{\mathcal T}_d| = 1600$, $|{\mathcal T}_r| = 1600$.}
  \label{figure:2dnonper_plot_ums1_ep}
  \end{figure}

  \begin{figure}[!htb]
	\centering
	\captionsetup{justification=centering}
		\begin{subfigure}{0.25\textwidth}
  \includegraphics[width=\textwidth]{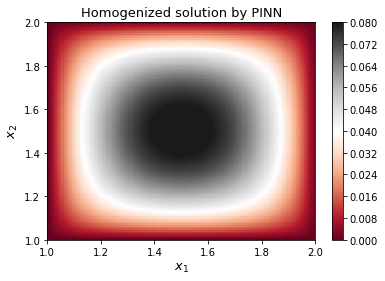}
  \caption{$\hat{u}_0$, noise-free data \\ \quad}%with width = 32, depth = 2 $\kappa_{nn} =0.500019
  \label{fig:2dnonper_plot_homsol_PINNs_12}
\end{subfigure}
\hspace{.3in}
  \begin{subfigure}{0.25\textwidth}
  \includegraphics[width=\textwidth]{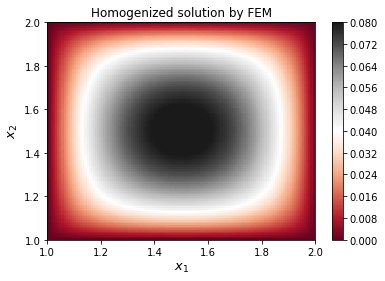}
  \caption{$u_{0,h}$ \\ \quad}%with $h=1/100$,\\ $\kappa^*_{FEM} =0.500050879$
  \label{fig:2dnonper_plot_homsol_FEM_12}
 \end{subfigure}
 \hspace{.3in}
  \begin{subfigure}{0.25\textwidth}
  \includegraphics[width=\textwidth]{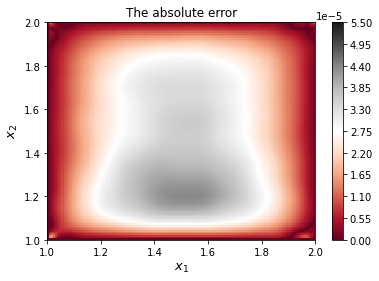}
  \caption{$\left|u_{0,h}(x)-\hat{u}_0(x)\right|$\\ (noise-free data)}%with $h=1/100$,\\ $\kappa^*_{FEM} =0.500050879$
  \label{fig:2dnonper_plot_solerror_nd80_ep7}
 \end{subfigure}
	\begin{subfigure}{0.25\textwidth}
  \includegraphics[width=\textwidth]{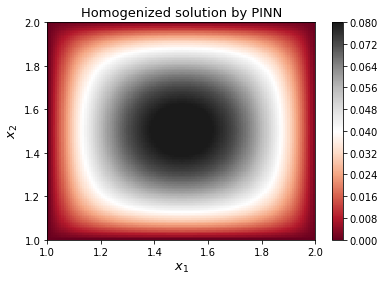}
  \caption{$\hat{u}_0$, $3\%$-noise data\\ \quad}%with width = 32, depth = 2 $\kappa_{nn} =0.500019
  \label{fig:2dnonper_plot_homsol_PINNs_12_ns3}
\end{subfigure}
\hspace{.3in}
  \begin{subfigure}{0.25\textwidth}
  \includegraphics[width=\textwidth]{Figure/2dnonper_plot_homsol_FEM_12_label.png}
  \caption{$u_{0,h}$\\ \quad}%with $h=1/100$,\\ $\kappa^*_{FEM} =0.500050879$
  \label{fig:2dnonper_plot_homsol_FEM_12_ns3}
 \end{subfigure}
 \hspace{.3in}
  \begin{subfigure}{0.25\textwidth}
  \includegraphics[width=\textwidth]{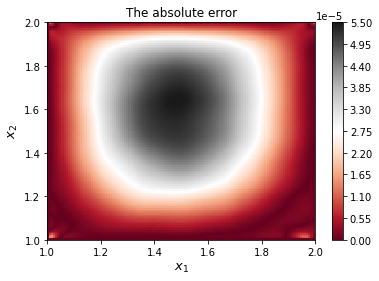}
  \caption{$\left|u_{0,h}(x)-\hat{u}_0(x)\right|$\\ ($3\%$-noise data)}%with $h=1/100$,\\ $\kappa^*_{FEM} =0.500050879$
  \label{fig:2dnonper_plot_solerror_nd80_ep7_ns3}
 \end{subfigure}
 \captionsetup{justification=justified}
 \caption{The 2D Homogenized solutions of problem (\ref{eq:original_nonper_2D}) obtained with noise-free and $3\%$-noise data. (a. d.): Homogenized solution obtained by PINNs; (b. e.): the reference homogenized solution;% from the $\lambda$-convergence method via FEM ;
 (c. f.): the absolute error between the two solutions, % for the homogenization of (\ref{eq:original_nonper_2D})
 when $\ep=2^{-7}$, 
the number of multiscale solution data and PDE residual points used are $|{\mathcal T}_d|=1600$, $|{\mathcal T}_r|=1600$.
 }
\label{figure:2dnonper_sol_plot_ns3}
  \vspace*{-3mm}
\end{figure}

Figure \ref{figure:glimitsolepns3_Sc} shows the relative $L^2$ errors for the learned G-limits and the homogenized solutions for different finescale parameter $\epsilon$ and $1600$ multiscale solution data collected at fixed spatial locations for all $\epsilon$, i.e., $|{\mathcal T}_d|=1600$. %the number of multiscale data points $|{\mathcal T}_d| = 1600$. %We note that the errors for both coefficient and the solution decay as the value of $\ep$ decreases. 
For noise-free scenarios, the error decays as the finescale size $\ep$ decreases. To further examine this effect, we plot the corresponding the learned G-limit and homogenized solutions  in Figure \ref{figure:2dnonper_plot_ums1_ep}. In particular, the approximation quality of G-limits appears to be more sensitive to the size of   $\ep$ for the noise-free case.
When the noise dominates over the multiscale oscillations in the data, the results are no longer sensitive to the size of finescale. Nonetheless, we can still achieve the errors of less than $1\%$ for both G-limit and the homogenized solutions under a $5\%$ noise level.

To further highlight the performance of the proposed method, we also show the learned homogenized solution with the reference solutions in Figure \ref{figure:2dnonper_sol_plot_ns3}. As we can see, our solutions agree very well with the reference solutions. Overall, our results show that when mild noise and multiscale fluctuations are presented in the data, the PINNs can provide good estimations of the G-limit for the 2D non-periodic example.

\subsection{Homogenization of an ergodic random coefficient}
Finally, we consider the following two-scale elliptic equation with an ergodic coefficient,  inspired by the exmaple in \cite[Section 4.2]{brown2017hierarchical}:
%Let $\Omega = [0,1]$ be the domain and ${\mathcal Z} = [0,1]^2$ be the unit cube in $\mathbb{R}^2$. We have the following equation. 
\beq
\label{eq:1Drandom}
\bsp
-\frac{d}{d x}\left(A^\ep(x,\omega)\cdot\frac{d}{d x}u^\ep(x)\right) &= 1\ \ \textrm{in} \ \ \Omega = [0,1], \\
u^\ep(0) &= u^\ep(1) = 0,
\end{split}
\eeq
where $A^\ep(x,\omega) = A(x, T_{\xoe}(\omega)) = 3.1 + (x+1)\sin(2\pi(\omega_1+\xoe))+\sin(2\pi(\omega_2 + \sqrt{2}\xoe))$ for $\omega = (\omega_1,\omega_2)$   drawn  from a uniform distribution over $[0,1]^2$. %{\mathcal Z} = .
Here the ergodic dynamical system $T: \mathbb{R} \times {\mathcal Z} \to {\mathcal Z}$ is given by
\beq
T(x)\omega = \omega + (1,\sqrt{2})x.
\eeq
% It is not difficult to see that the coefficient $A^\ep(x,\omega)$ is quasi-periodic and ergodic. 
\begin{figure}[!htb]
	\centering
	%  \captionsetup{justification=centering}
	\begin{subfigure}{0.40\textwidth}
  \includegraphics[width=\textwidth]{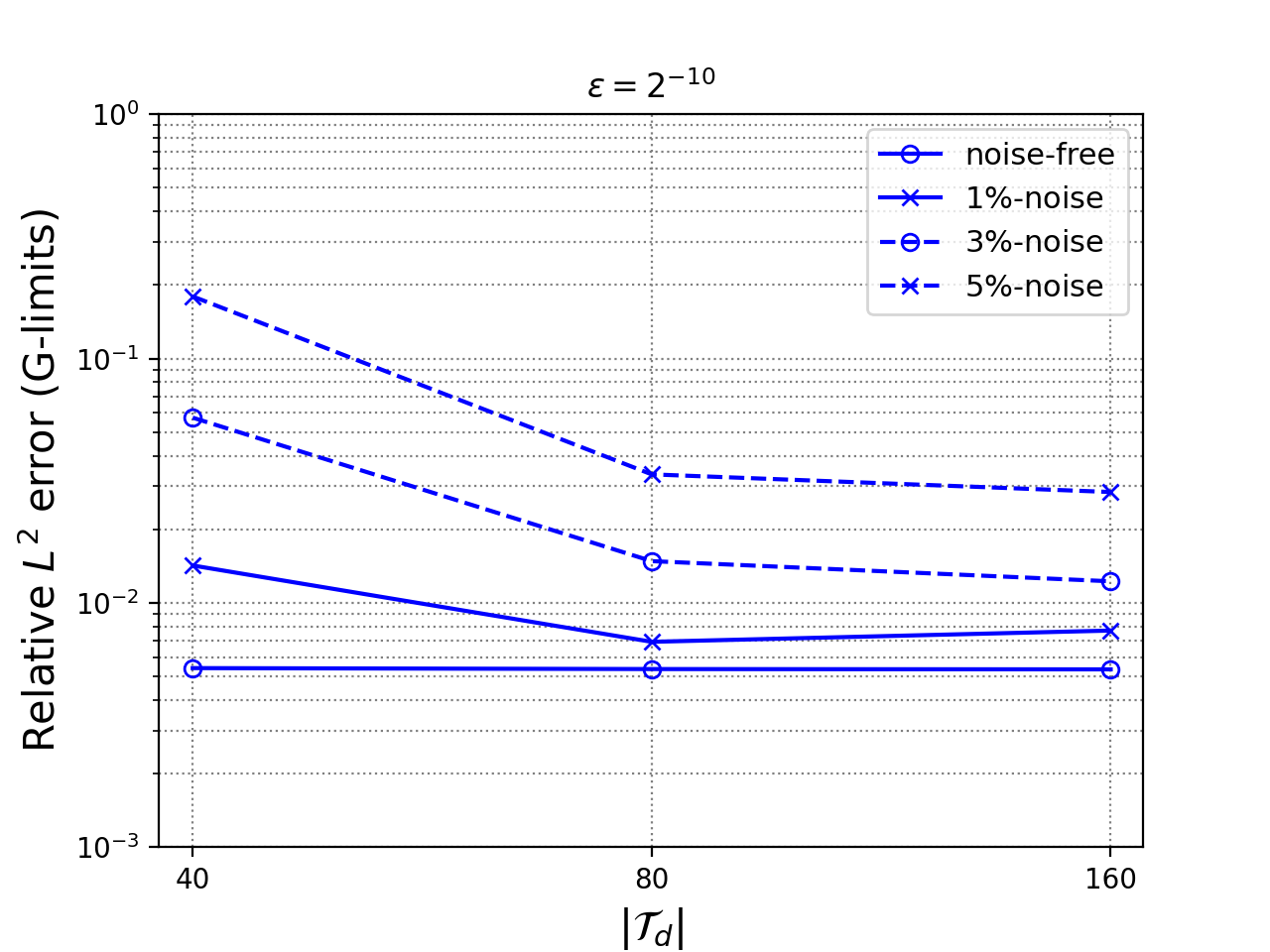}
  \caption{G-limits}%with width = 32, depth = 2 $\kappa_{nn} =0.500019
  \label{fig:1drandom_errors_glimit_nd}
\end{subfigure}
\hspace{.3in}
  \begin{subfigure}{0.40\textwidth}
  \includegraphics[width=\textwidth]{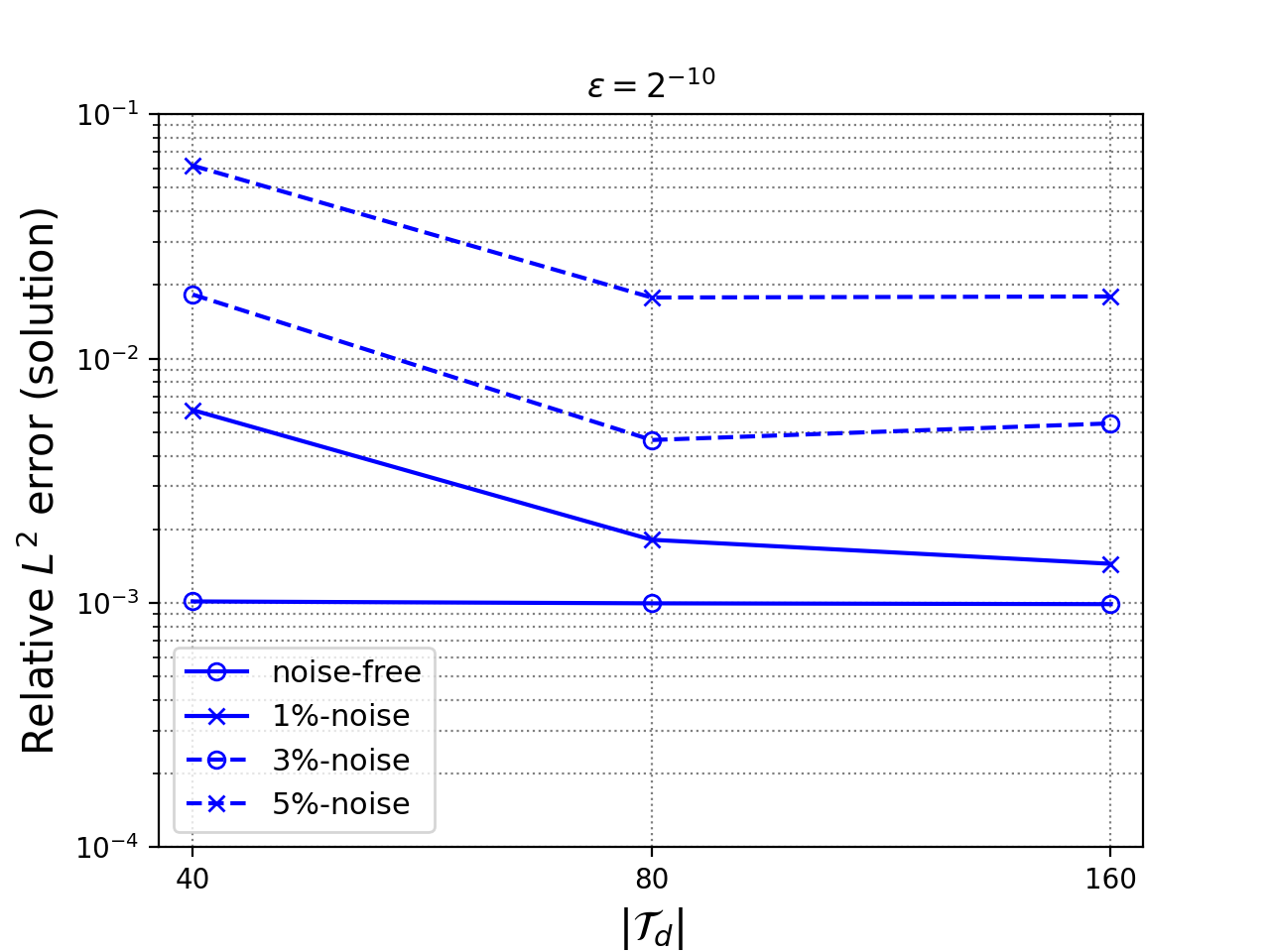}
  \caption{Homogenized solutions}%with $h=1/100$,\\ $\kappa^*_{FEM} =0.500050879$
  \label{fig:1drandom_errors_homsol_nd}
 \end{subfigure}
   \vspace*{-4mm}
 \caption{Error results for problem (\ref{eq:1Drandom}): the relative $L^2$ errors for the G-limits  and the homogenized solutions with different number of multiscale data corrupted by different noise levels for $\ep = 2^{-10}$ and the number of PDE residual points is $|{\mathcal T}_r| = |{\mathcal T}_d| +20$.}
\label{figure:glimitsoltdns4}
  \vspace*{-3mm}
\end{figure}

\begin{figure}[!htb]
	\centering
	%  \captionsetup{justification=centering}
	   \begin{subfigure}{0.35\textwidth}
  \includegraphics[width=\textwidth]{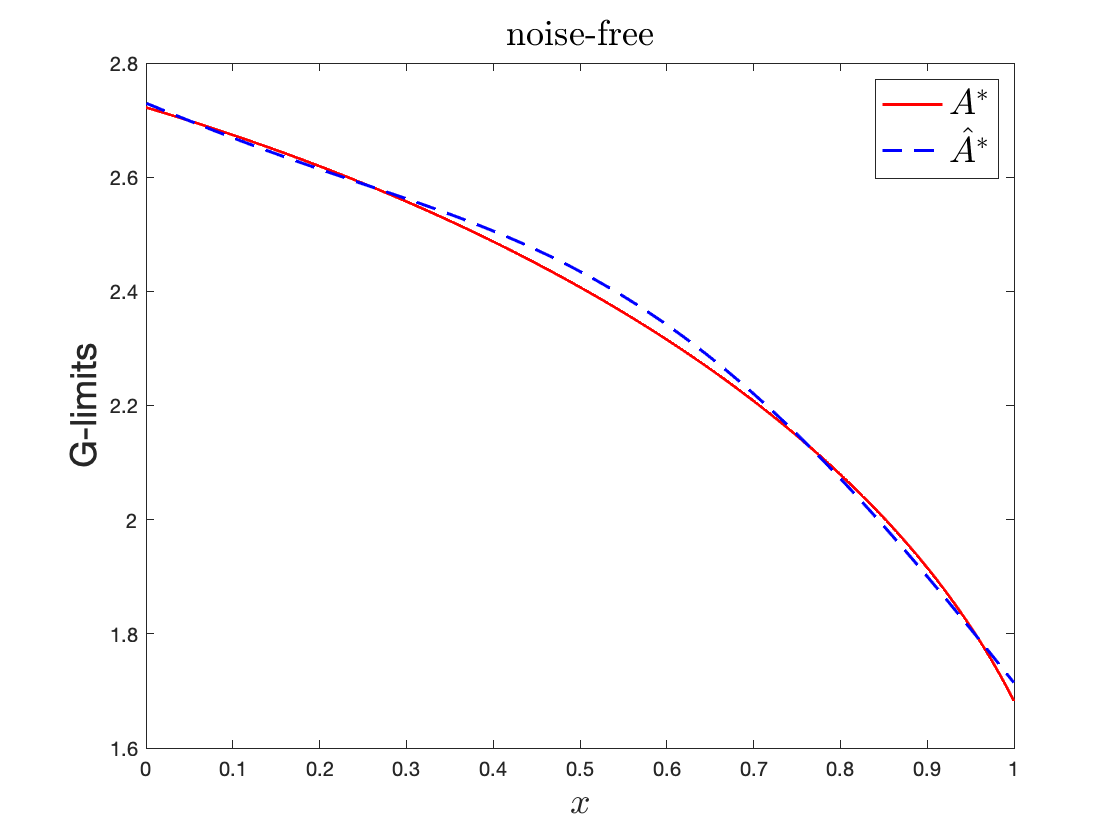}
  \caption{G-limits (noise-free)}%with width = 32, depth = 2 $\kappa_{nn} =0.500019
  \label{fig:1drandom_plot_kappa_ums1_ep10_160_1}
\end{subfigure}
\hspace{-.25in}
		\begin{subfigure}{0.35\textwidth}
  \includegraphics[width=\textwidth]{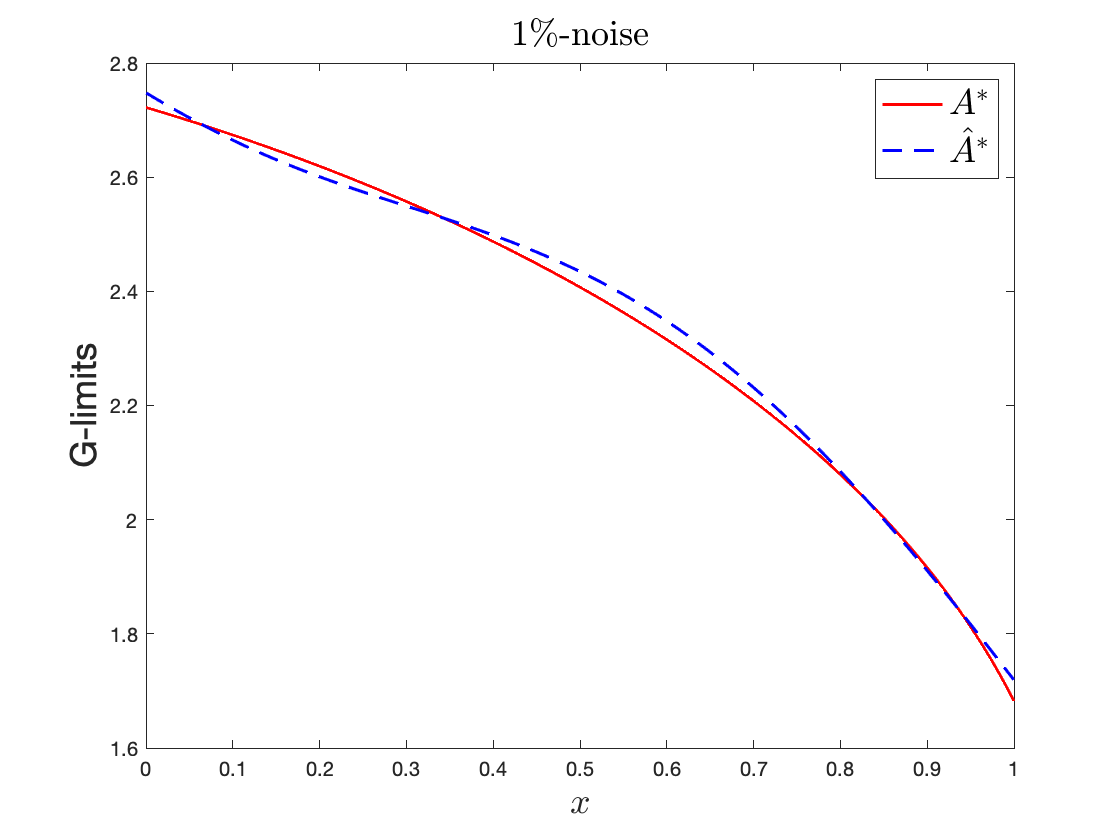}
  \caption{G-limits ($1\%$ noise)}%with width = 32, depth = 2 $\kappa_{nn} =0.500019
  \label{fig:1drandom_plot_kappa_ums1_ns1_160_sciann}
\end{subfigure}
\hspace{-.25in}
	\begin{subfigure}{0.35\textwidth}
  \includegraphics[width=\textwidth]{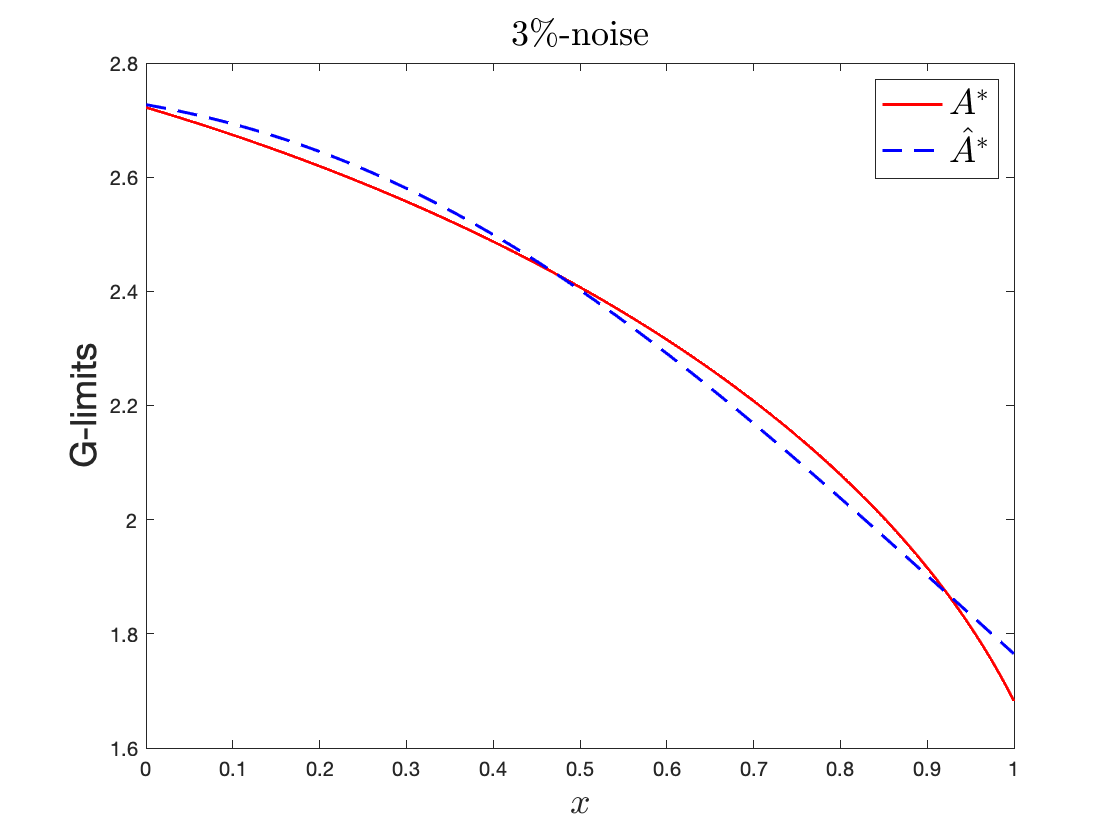}
  \caption{G-limits ($3\%$ noise)}%with width = 32, depth = 2 $\kappa_{nn} =0.500019
  \label{fig:1drandom_plot_kappa_ums1_ns3_160_sciann}
\end{subfigure}
\hspace{-.25in}
  \begin{subfigure}{0.35\textwidth}
  \includegraphics[width=\textwidth]{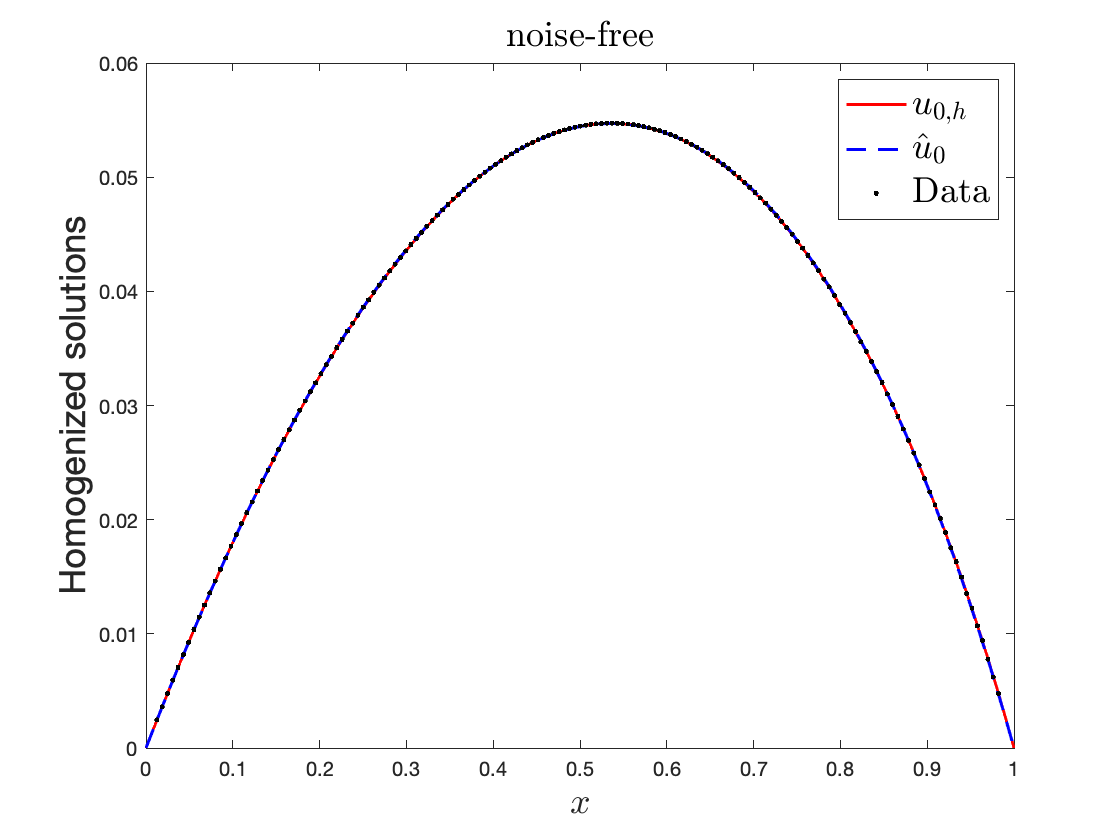}
  \caption{Solutions (noise-free)}%with $h=1/100$,\\ $\kappa^*_{FEM} =0.500050879$
  \label{fig:1drandom_plot_data_homsols_ums1_ep10_160_1}
 \end{subfigure}
 \hspace{-.25in}
  \begin{subfigure}{0.35\textwidth}
  \includegraphics[width=\textwidth]{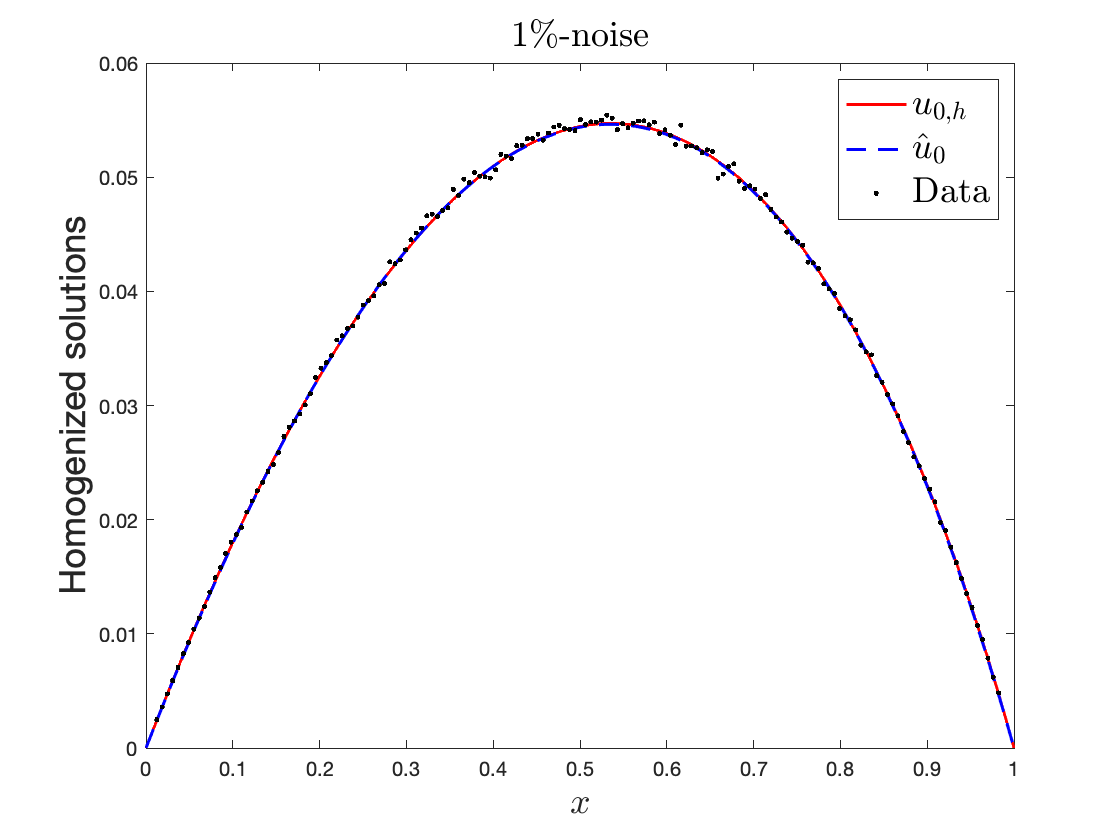}
  \caption{Solutions ($1\%$ noise)}%with $h=1/100$,\\ $\kappa^*_{FEM} =0.500050879$
  \label{fig:1drandom_plot_data_homsols_ums1_ns1_160_sciann}
 \end{subfigure}
\hspace{-.25in}
  \begin{subfigure}{0.35\textwidth}
  \includegraphics[width=\textwidth]{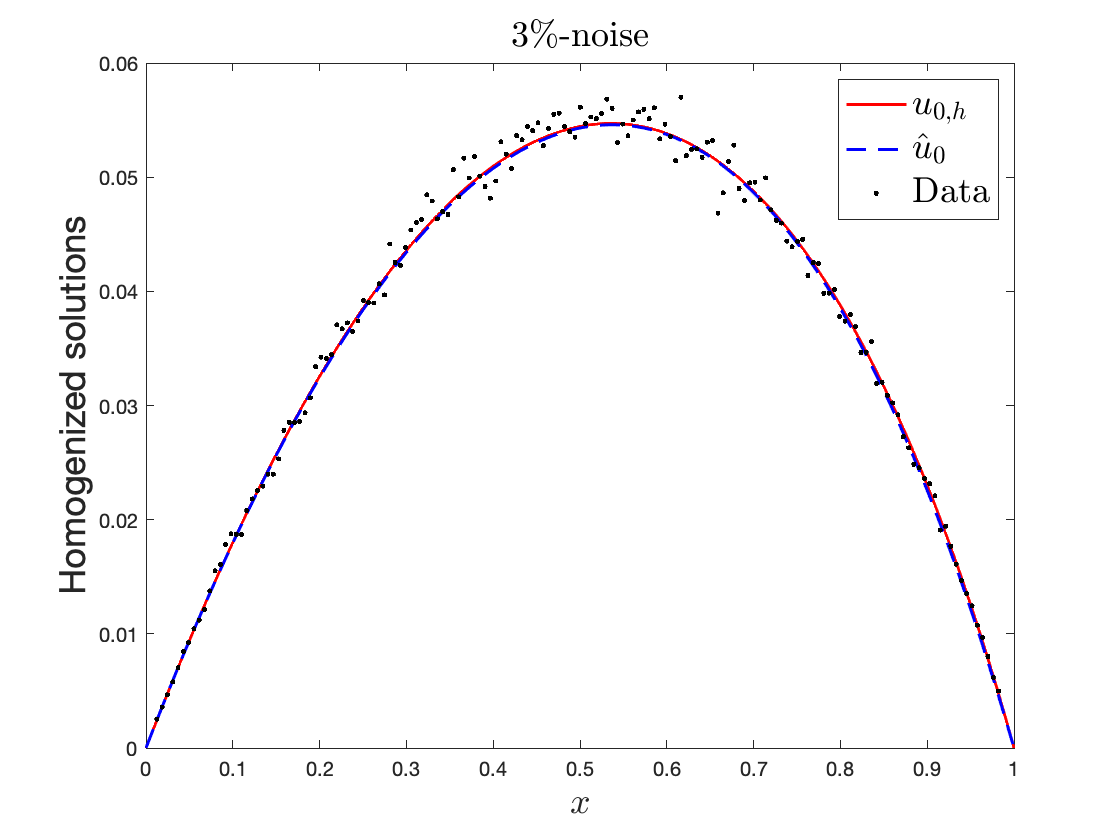}
  \caption{Solutions ($3\%$ noise)}%with $h=1/100$,\\ $\kappa^*_{FEM} =0.500050879$
  \label{fig:1drandom_plot_data_homsols_ums1_ns3_160_sciann}
 \end{subfigure}
   \vspace*{-4mm}
 \caption{Problem (\ref{eq:1Drandom}):
 comparison of  the reference coefficient and  solutions ($A^*(x)$ and $u_{0,h}(x)$)  and the counterparts learned by PINNs  with different noise levels in the data. (a. b. c.):  the  G-limit; (d. e. f.):  the homogenized solution, where $\ep = 2^{-10}$ and the number of  multiscale  data and PDE residual points are $|{\mathcal T}_d| = 160$, $|{\mathcal T}_r| = 180$.}
 %(a. b. c.): The exact G-limit $A^*(x)$ and the G-limit via PINNs $\hat{A}^*(x)$ with different level of noise; (d. e. f.): Homogenized solution via FEM $u_{0,h}(x)$ and PINNs $\hat{u}_{0}(x)$ with different noise level for $\ep = 2^{-10}$ and the number of data and PDE residual points are $|{\mathcal T}_d| = 160$, $|{\mathcal T}_r| = 180$.}
\label{figure:1drandom_plots_ns}
  \vspace*{-3mm}
\end{figure}

\begin{figure}[!htb]
	\centering
	%  \captionsetup{justification=centering}
	\begin{subfigure}{0.40\textwidth}
  \includegraphics[width=\textwidth]{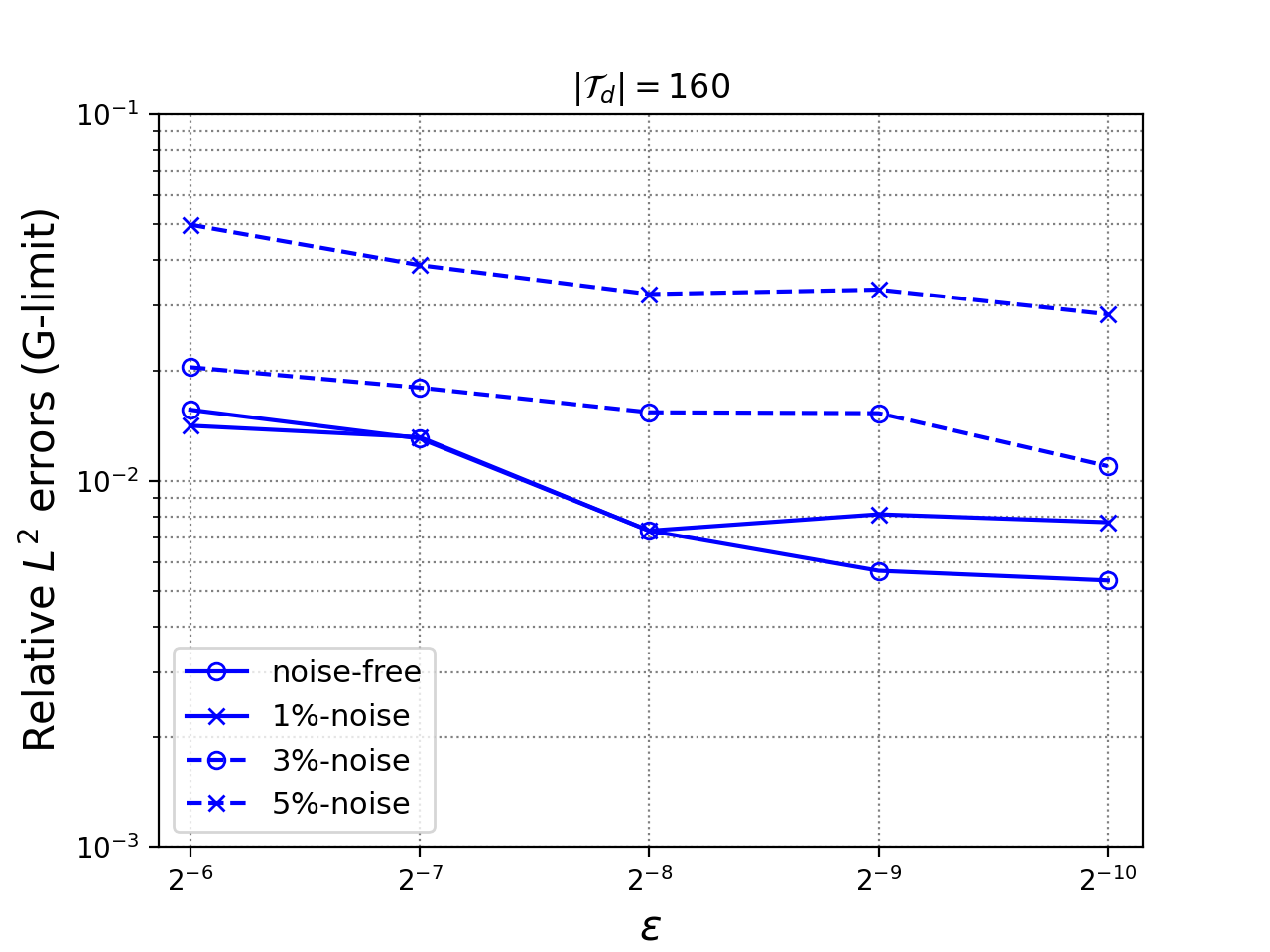}
  \caption{G-limits}%with width = 32, depth = 2 $\kappa_{nn} =0.500019
  \label{fig:1drandom_errors_glimit_ep}
\end{subfigure}
\hspace{.3in}
  \begin{subfigure}{0.40\textwidth}
  \includegraphics[width=\textwidth]{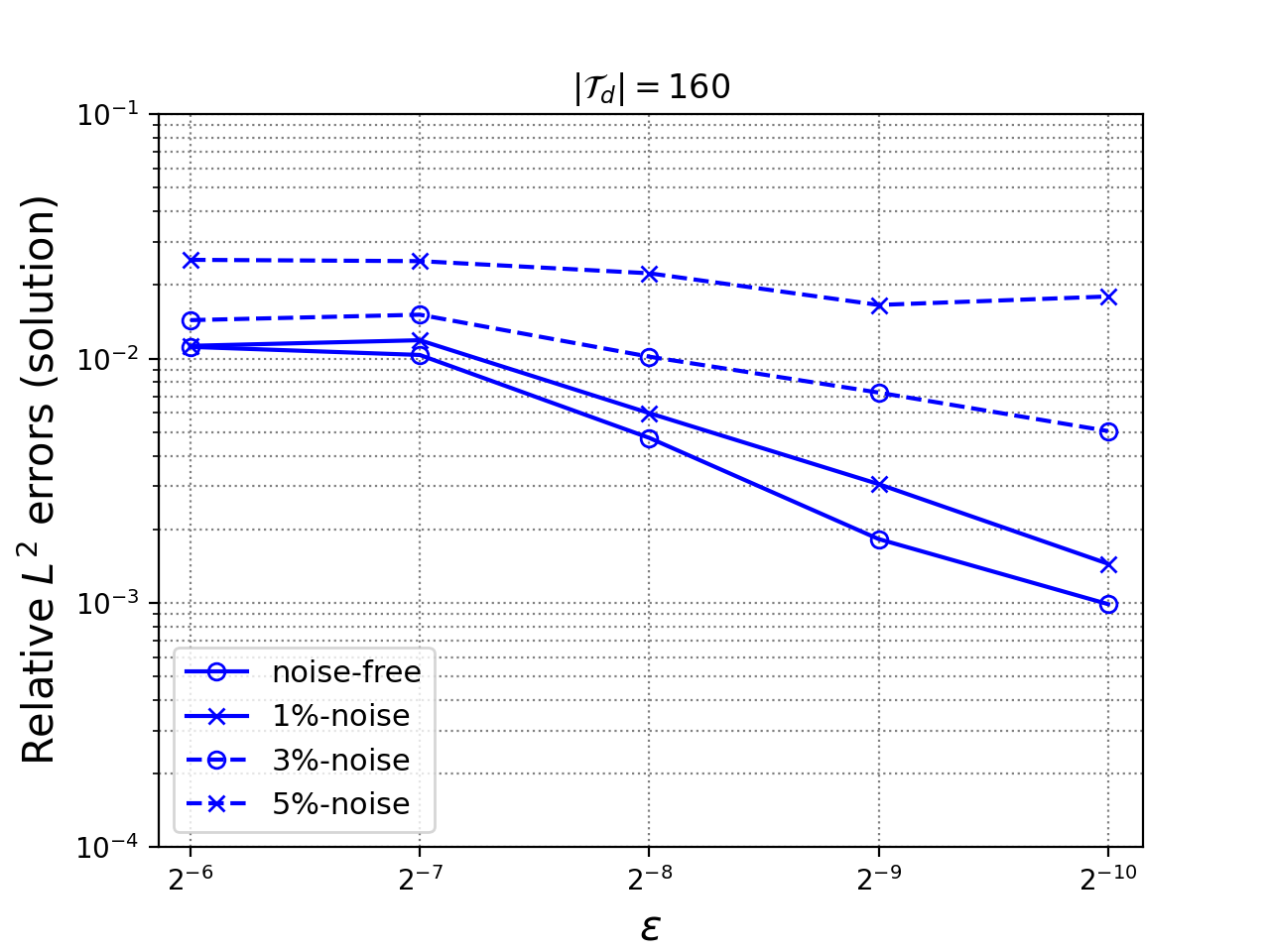}
  \caption{Homogenized solutions}%with $h=1/100$,\\ $\kappa^*_{FEM} =0.500050879$
  \label{fig:1drandom_errors_homsol_ep}
 \end{subfigure}
   \vspace*{-4mm}
 \caption{Error results for problem (\ref{eq:1Drandom}): the relative $L^2$ errors for the G-limits and the homogenized solutions  with different finescale parameter $\ep$ and noise levels in the  data when the number of  multiscale  data and PDE residual points are  $|{\mathcal T}_d| = 160$ and $|{\mathcal T}_r| = 180$.}
\label{figure:glimitsolepns4}
  \vspace*{-5mm}
\end{figure}
Notably, the G-limit $A^*(x)$ of this ergodic homogenization problem is deterministic and independent of the realization of $\omega$ \cite{jikov2012homogenization}. 
In this example, it is known that the exact G-limit is given by $1/\mathbb{E}\left [1/A(x,\omega)\right]$, where $\mathbb{E}$ denotes the expectation with respect to the realizations of $\omega$ \cite{alexanderian2014primer}. % \in {\mathcal Z}$ 
Traditional approaches for ergodic homogenization usually first compute the local cell problems for many different realizations of the coefficient $A^\ep(x,\omega)$ to obtain the realization dependent approximations to the G-limits. Then  the G-limit can be approximated by taking its expectation. This procedure requires solving a lot of cell problems at many different points $x$ with thousands of realizations of $\omega$.
For PINNs, on the other hand, we just  need to collect the multiscale solution data based on a single realization of the coefficient for PINNs.

Specifically, the reference G-limit $A^*(x)$ is computed as the expectation by roughly $200,000$ %$2.25e+6$ 
Monte Carlo samples over $2000$ equidistant points in the spatial domain. %, using $2.25e+6$ Monte Carlo samples.% at each of $2000$ equidistant points.
Based on this G-limit, we compute the reference homogenized solution by FEM with mesh size $h=1/2000$.  For PINNs,  we  learned the G-limit based on only one realization of  $\omega = (0.5,0.5)$. The training data are equally spaced sampled from the multiscale solution for each finescale parameter value $\epsilon$ computed by FEM with a mesh size $h = 1/10^5$. The architecture parameters and other hyperparameters of PINNs are listed in {\it Table \ref{tb:hyperparameters}} in the appendix. The relative $L^2$ errors are computed using a mesh of size $h=1/2000$.

 Figure \ref{figure:glimitsoltdns4} shows the error convergence of the learned G-limits and homogenized solution for $\ep = 2^{-10}$. For the noise-free case, an error level ${\mathcal O}(10^{-3})$ for both G-limit and the homogenized solution can be achieved. For noisy data, while the errors for the G-limit and homogenized solution tend to stagnate after more than $80$ multiscale data points are used, we can still achieve errors of ${\mathcal O}(10^{-2})$  for both G-limit and the homogenized solution %with sufficient number of data 
with $5\%$-noise corruption in the data. 

We also compare the learned G-limits and the homogenized solutions with their references for $\ep = 2^{-10}$ and $|{\mathcal T}_d| = 160$ data with different noise levels in Figure \ref{figure:1drandom_plots_ns}. While  the learned G-limit is close to the reference coefficient, the approximated homogenized solution almost overlaps with the data.
Again, %Since the solution obtained by PINNs is informed by the homogenized equation with the recovered G-limit, it 
we observed PINNs tend to learn the macroscopic behavior of the noisy data that is close to the reference homogenized solution.

The error results with finescale parameter $\ep$ are presented in Figure \ref{figure:glimitsolepns4}. For both noise-free and noisy scenarios, the errors tend to decay as the finescale size $\ep$ decreases, particularly for homogenized solutions. %\textcolor{red}{remove Figure 4.20?}
The effect of $\ep$  is less pronounced when the noise level is high because the noise dominates over the fine scale size of $\ep$.
%For a high noise level, the effect of $\ep$ is less pronounced as the noise become dominant over the fine scale size  $\ep$. 
Notably,
with $5\%$-noise corruption, we can still achieve  the relative errors less than $5\%$ for both G-limit and homogenized solution by incorporating the corresponding homogenized equation.

\section{Conclusion}

In this paper, we proposed a simple and flexible approach to estimate the G-limit and approximate the homogenized solution for multiscale elliptic equations from data, by adopting physics-informed neural networks (PINNs). Due to the lack of the homogenized solution data or measurements, we employ the multiscale solution data as the surrogate of the homogenized solution. % which is particularly suitable when the finescale parameter is small. % Even though the inverse problem is challenging due to  
Despite the rapid multiscale and noisy fluctuations presented in the data, we demonstrated that 
PINNs %can learn the macroscopic (homogenized) behavior of our multiscale data and 
are capable to effectively extract the macroscopic (homogenized) behavior from data and provide good approximations to the G-limits and the homogenized solution.    
The applicability and performance  of the method have been demonstrated through a number of different benchmark problems. Finally, we remark that  except for the assumption of the existence and structure of the homogenized equation, our approach does not rely on the periodicity or the explicit formula of the underlying multiscale coefficient during the learning stage,  which can be 
applicable to  more general settings beyond periodic cases.

\section*{Acknowledgments}
XZ was supported by Simons Foundation.
\appendix

\section{Hyperparameters used in each numerical example}

\begin{table}[!htb]
\centering
\caption{ Hyperparameters used for each numerical example: For each example, the learning rate is decayed when the training loss plateaus.}
% \textcolor{red}{For L-BFGS, the neural networks are trained until convergence and the number of epochs is disregarded.}}
\begin{tabular}{|c|c|c|c|c|c|c|}
  \hline
\multicolumn{7}{|c|}{\bf{1D locally periodic coefficient (Section 4.1)} }\\
\hline
\multicolumn{2}{|c|}{NN depth}  & \multicolumn{2}{c|}{NN width}& \multirow{2}{*}{Initial learning rate} & \multirow{2}{*}{$\#$ of epochs} &  \multirow{2}{*}{Batch size} \\
\cline{1-4}$\hat{A}^*(x)$&$\hat{u}_{0}(x)$&$\hat{A}^*(x)$&$\hat{u}_{0}(x)$&&&\\
\hline
  3&3& 30&30 & 0.001 & $40000$ & $64$ \\
  \hline
  \multicolumn{7}{|c|}{\bf{1D heavily oscillatory coefficient (Section 4.2)}}\\
\hline
\multicolumn{2}{|c|}{NN depth}  & \multicolumn{2}{c|}{NN width}& \multirow{2}{*}{Initial learning rate} & \multirow{2}{*}{$\#$ of epochs} &  \multirow{2}{*}{Batch size} \\
\cline{1-4}$\hat{A}^*(x)$&$\hat{u}_{0}(x)$&$\hat{A}^*(x)$&$\hat{u}_{0}(x)$&&&\\
\hline
  3&3& 50&50 & 0.0001 & $80000$ & $64$ \\
  \hline
  \multicolumn{7}{|c|}{\bf{2D non-periodic coefficient (Section 4.3)}}\\
\hline
\multicolumn{2}{|c|}{NN depth}  & \multicolumn{2}{c|}{NN width}& \multirow{2}{*}{Initial learning rate} & \multirow{2}{*}{$\#$ of epochs} &  \multirow{2}{*}{Batch size} \\
\cline{1-4}$\hat{A}^*_{ii}(x)$&$\hat{u}_{0}(x)$&$\hat{A}^*_{ii}(x)$&$\hat{u}_{0}(x)$&&&\\
\hline
 2&4& 40&45 & 0.001 & $100000$ & $200$ \\
  \hline
  \multicolumn{7}{|c|}{\bf{1D ergodic random coefficient (Section 4.4)}}\\
\hline
\multicolumn{2}{|c|}{NN depth}  & \multicolumn{2}{c|}{NN width}& \multirow{2}{*}{Initial learning rate} & \multirow{2}{*}{$\#$ of epochs} &  \multirow{2}{*}{Batch size} \\
\cline{1-4}$\hat{A}^*(x)$&$\hat{u}_{0}(x)$&$\hat{A}^*(x)$&$\hat{u}_{0}(x)$&&&\\
\hline
  2&3& 10&30 & 0.001 & $60000$ & $64$ \\
\hline
\end{tabular} 
\label{tb:hyperparameters}
\end{table}

\clearpage

\bibliographystyle{siam}
\bibliography{random,asympPre,referencesHomPINN}

%\end{thebibliography}

\end{document}